\newtheorem{thm}{Theorem}[section]
\newtheorem{conj}{Conjecture}
\newtheorem{lemma}[thm]{Lemma}
\newtheorem{prop}[thm]{Proposition}
\newtheorem{cor}[thm]{Corollary}
\theoremstyle{definition}
\newtheorem{df}[thm]{Definition}
\newtheorem{nota}[thm]{Notation}
\newtheorem{ques}{Question}
\theoremstyle{remark}
\newtheorem*{rem}{Remark}
\theoremstyle{definition}
\newtheorem{ex}[thm]{Example}
\theoremstyle{definition}
\newcommand{\Z}{\mathbb{Z}}
\newcommand{\N}{\mathbb{N}}
\newcommand{\Q}{\mathbb{Q}}
\newcommand{\R}{\mathbb{R}}
\newcommand{\C}{\mathbb{C}}
\newcommand{\F}{\mathbb{F}}
\newcommand{\norm}[1]{\|#1\|}
\renewcommand{\Im}{\text{Im\,}}
\DeclareMathOperator{\lcm}{lcm}
\DeclareMathOperator{\ggd}{gcd}
\DeclareMathOperator{\GL}{GL}
\DeclareMathOperator{\RF}{RF}
\DeclareMathOperator{\Tr}{Tr}
\DeclareMathOperator{\ad}{ad}
\DeclareMathOperator{\rank}{rank}
\DeclareMathOperator{\Res}{Res}
\title{Residual Finiteness Growth in Two-Step Nilpotent Groups}
\author{Jonas Der\'e and Joren Matthys\thanks{Corresponding author: \href{jonas.dere@kuleuven.be}{Jonas Der\'e}. KU Leuven Campus Kulak Kortrijk, Department of Mathematics, Research unit `Algebraic Topology and Group Theory', B-8560 Kortrijk, Belgium. The authors were supported by Internal Funds KU Leuven (project number 3E220559).}}
\date{\vspace{-1cm}}
\begin{document}
	\maketitle
	\begin{abstract}
	Given a finitely generated residually finite group $G$, the residual finiteness growth $\RF_G: \N \to \N$ bounds the size of a finite group $Q$ needed to detect an element of norm at most $r$. More specifically, if $g\in G$ is a non-trivial element with $\norm{g}_G \leq r$, so $g$ can be written as a product of at most $r$ generators or their inverses, then we can find a homomorphism $\phi: G \to Q$ with $\phi(g) \neq e_Q$ and $|Q| \leq \RF_G(r)$. The residual finiteness growth is defined as the smallest function with this property. This function has been bounded from above and below for several classes of groups, including virtually abelian, nilpotent, linear and free groups. 
	
	However, for many of these groups, the exact asymptotics of $\RF_G$ are unknown (in particular this is the case for a general nilpotent group), nor whether it is a quasi-isometric invariant for certain classes of groups. In this paper, we make a first step in giving an affirmative answer to the latter question for $2$-step nilpotent groups, by improving the polylogarithmic upper bound known in literature, and to show that it only depends on the complex Mal'cev completion of the group. If the commutator subgroup is one- or two-dimensional, we prove that our bound is in fact exact, and we conjecture that this holds in general. 
	\end{abstract}
	\section{Introduction}
	A group $G$ is called \emph{residual finite} if for all non-trivial elements $g\in G$ there exists a homomorphism to a finite group $\phi: G \to Q$ such that $\phi(g) \neq e$. Various classes of groups are known to be residually finite, for example finitely generated nilpotent groups, polycyclic groups, finitely generated linear groups, fundamental groups of compact $3$-manifolds, \ldots Interestingly, it is an open question whether hyperbolic groups are residually finite, we refer to the introduction of \cite{tholozan2022residually} for some history on this problem.
	
	In \cite{bou2010quantifying}, Bou-Rabee began to quantify residual finiteness. Given a finitely generated group $G$, the author defined the (normal) \emph{residual finiteness growth} $\RF_G$ to be the minimal function such that if an element $g$ has word norm $0 \neq \norm{g}_G \leq r$, then $\phi$ can be found as above with $|Q| \leq \RF_G(r)$. Here, $\norm{\cdot}_G$ is the word norm with respect to some finite generating set $S$. If $\RF_G$ is considered up to the equivalence relation introduced in Section \ref{sec_growth}, then the function $\RF_G$ is independent of the choice of $S$. This way, the residual finiteness growth becomes a group invariant.
	
	Since the birth of this research topic, bounds on $\RF_G$ have been established for several classes of groups, including linear groups \cite{franz2017quantifying}, lamplighter groups \cite{bou2019residual}, free groups \cite{bradford2019short}, \ldots \hspace{1mm} Exact asymptotics are known for only few groups and classes, for example the first Grigorchuk group \cite{bou2010quantifying}, virtually abelian groups \cite{dere2023residual} and $S$-arithmetic subgroups of higher rank Chevalley groups \cite{bou2012quantifying}. We refer to the survey article \cite{survey2022} for more details.
	
Many open questions remain in this relatively recent topic. The exact connection between the residual finiteness growth of groups and group constructions such as forming semi-direct or wreath products is unknown. Also, as exact results on $\RF_G$ are lacking for many classes, it is unclear whether some classes can be characterized via their residual finiteness growth. A positive result in this direction has been shown in \cite{bou2011asymptotic}, where the authors proved that virtually nilpotent groups within the class of linear groups are those for which $\RF_G$ is bounded by $\log^k$ for some $k\geq 0$. However, one may ask whether the result can be strengthened to the following:
	\begin{conj}
		A finitely generated (residually finite) group $G$ is virtually nilpotent if and only if $\RF_G$ equals $\log^k$ for some $k\geq 0$.
	\end{conj}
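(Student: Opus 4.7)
The conjecture splits into two implications of very different difficulty, which I address in turn.

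\textbf{Forward direction: virtually nilpotent $\Rightarrow$ $\RF_G \simeq (\log r)^k$.}
This direction follows from a standard transfer argument combined with Bou-Rabee's upper bound for nilpotent groups. Choose a normal nilpotent subgroup $N \triangleleft G$ of finite index $m$. By \cite{bou2010quantifying}, $\RF_N(r) \preceq (\log r)^d$ for some $d$ depending on the Hirsch length and nilpotency class of $N$. Given non-trivial $g \in G$ with $\norm{g}_G \leq r$, I would distinguish two cases: if $g \notin N$, the quotient $G/N$ of size $m$ already separates $g$ from the identity; otherwise $g \in N$, and a Schreier rewriting bounds its word norm in $N$ by $Cr$ for some $C = C(m)$, so a normal subgroup $M \triangleleft N$ of index at most $(\log r)^d$ (up to a constant) separates $g$. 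Replacing $M$ by its normal core $\tilde M = \bigcap_{x \in G} xMx^{-1}$ produces a subgroup normal in $G$ of index at most $m \cdot [N:M]^m \preceq (\log r)^{dm}$, yielding the desired polylogarithmic bound.

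\textbf{Converse: $\RF_G \simeq (\log r)^k$ $\Rightarrow$ $G$ virtually nilpotent.} This direction contains the real content of the conjecture and I only have a speculative strategy. I would first reinterpret the hypothesis via the depth function $K_s := \bigcap \{N \triangleleft G : [G:N] \leq s\}$, a characteristic finite-index subgroup; polylogarithmic $\RF_G$ translates to $K_{(\log r)^k}$ having trivial intersection with the ball of radius $r$ in $G$. The next step would be to upgrade this to structural information on $G$, via one of two routes: (a) construct a faithful linear representation of $G$ of controlled dimension from the tower of quotients $G/K_s$, after which the characterization of virtually nilpotent linear groups by $\RF_G$ in \cite{bou2011asymptotic} finishes the argument, or (b) bound $|G/K_s|$ polynomially in $s$ to obtain polynomial subgroup growth, invoke Lubotzky--Mann--Segal for virtual solvability of finite rank, and then rule out the non-nilpotent solvable case by a separate lower-bound argument.

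\textbf{Main obstacle.} The fundamental difficulty is that the finite quotients witnessing $\RF_G(r) \leq (\log r)^k$ depend on the element $g$ being separated and in general do not combine into a single globally small quotient; in particular the hypothesis gives no a priori polynomial control on $|G/K_s|$, which blocks both routes above at their first step. Moreover, even granting virtual solvability of finite rank, excluding non-nilpotent examples requires producing, in every such group, an infinite sequence of elements of norm $r$ whose minimal separating quotient grows superpolylogarithmically -- a matching lower bound problem for which no general technique is presently available. In the absence of a Gromov-type geometric interpretation of $\RF_G$, I expect a full proof of the conjecture to demand substantially new ideas, which is consistent with the statement being recorded as a conjecture rather than as a theorem in this paper.
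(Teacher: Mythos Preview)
The statement is recorded in the paper as a \emph{conjecture}, and the paper explicitly states that ``both implications of the conjecture are still open.'' There is no proof in the paper to compare against; rather, the entire paper is devoted to partial progress on the forward implication in the two-step nilpotent case.

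Your treatment of the converse direction is appropriate: you correctly flag it as open and identify the genuine obstacles.

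However, your forward direction contains a real gap. The conjecture asserts that $\RF_G \approx \log^k$ for some \emph{specific} $k$, i.e.\ a two-sided equivalence with a single power of the logarithm. Your argument only establishes a polylogarithmic \emph{upper} bound $\RF_G \preceq \log^{dm}$. You do not address the lower bound at all, and more importantly, even knowing $\log^a \preceq \RF_G \preceq \log^b$ for some $a \leq b$ does not pin $\RF_G$ down to a single $\log^k$: the function could in principle oscillate between these bounds or fail to be equivalent to any pure power of $\log$. Establishing that $\RF_G$ is \emph{exactly} $\log^k$ for a general finitely generated nilpotent group is precisely the open problem the paper is working on; it is currently known only for virtually abelian groups (cited as \cite{dere2023residual}) and, via the results of this paper, for two-step nilpotent groups whose commutator subgroup has rank at most two. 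So the forward implication is not the routine transfer argument you present, and your proposal should acknowledge that it, too, remains open.
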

\noindent Both implications of the conjecture are still open. In \cite{dere2023residual}, it was shown that at least virtually abelian groups have residual finiteness growth equal to $\log^k$, therefore partially confirming one direction of this conjecture.
	
	In this article, we will focus on nilpotent groups. From \cite{bou2010quantifying}, we know that their residual finiteness growth is bounded above by $\log^{h(G)}$, where $h(G)$ is the Hirsch length of the nilpotent group. Later, an upper bound of the form $\log^{\psi(G)}$ with $\psi(G)$ possibly much smaller than $h(G)$ was found in \cite{pengitore2018corrigendum}. This bound was known to be non-exact and to only depend on the so-called rational Mal'cev completion $G^\Q$ of a torsion-free group $G$. This raises the following question:
	\begin{ques}
		Let $G$ be a torsion-free nilpotent group. Does $\RF_G$ equal $\log^k$, where $k$ depends on the Mal'cev completion $G^\F$ over some field $\F$?
	\end{ques}
	
	Focusing on the two-step nilpotent case, we will establish a new upper bound, which improves the previous one.
	\begin{thm} \label{thm_main_upper_bound}
		Let $G$ be a finitely generated, infinite, two-step nilpotent group, then $\RF_G$ is smaller than $\log^{2k(G)+1}$ for some $k(G)\in \N$.
	\end{thm}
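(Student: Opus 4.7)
The plan is to handle the cases $g \notin [G,G]$ and $g \in [G,G]$ separately, in each case constructing a small finite quotient of $G$ by reducing the Mal'cev completion modulo a small prime $p$. Assume without loss of generality that $G$ is torsion-free and fix a Mal'cev basis $a_1, \ldots, a_d, c_1, \ldots, c_k$ adapted to the commutator filtration, so that every $g \in G$ has a unique normal form $g = a_1^{x_1} \cdots a_d^{x_d} c_1^{y_1} \cdots c_k^{y_k}$. Standard word-length estimates in two-step nilpotent groups give $|x_i| = O(r)$ and $|y_j| = O(r^2)$ whenever $\norm{g}_G \leq r$.

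If $g \notin [G,G]$, some $x_i$ is nonzero with $|x_i| = O(r)$, so by Chebyshev's theorem there is a prime $p = O(\log r)$ with $p \nmid x_i$. The homomorphism $G \to G/[G,G] \to \F_p$ projecting onto the $i$-th coordinate then detects $g$ in a quotient of size $p = O(\log r)$, which is negligible compared with $\log^{2k(G)+1}(r)$ for any $k(G) \geq 1$.

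The main case is $g \in [G,G]$. Here $g = c_1^{y_1} \cdots c_k^{y_k}$ with nonzero integer vector $(y_1, \ldots, y_k)$ and $|y_j| = O(r^2)$, so one picks a prime $p = O(\log r)$ such that this vector is still nonzero modulo $p$. Reducing the complex Mal'cev completion modulo $p$ (legitimate for all but finitely many $p$) yields a two-step nilpotent $\F_p$-Lie algebra $\mathfrak{g}_p = V_p \oplus Z_p$ whose bracket is encoded by an alternating form $\omega_p : V_p \times V_p \to Z_p$. For any linear functional $\lambda \in Z_p^*$ with $\lambda(g) \neq 0$, quotienting by $\ker \lambda$ in $Z_p$ and by the radical of $\lambda \circ \omega_p$ in $V_p$ produces a Lie algebra surjection onto a Heisenberg algebra $\mathfrak{h}_{2m+1}(\F_p)$, where $2m = \rank(\lambda \circ \omega_p)$. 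Exponentiating gives a group surjection $G \to H_{2m+1}(\F_p)$ mapping $g$ to a nonzero central element, and the resulting quotient has size $p^{2m+1}$. Define $k(G)$ to be the smallest nonnegative integer $m$ such that for every nonzero $z$ in the centre of the complex Mal'cev Lie algebra of $G$ there exists a linear functional $\lambda$ with $\lambda(z) \neq 0$ and $\rank(\lambda \circ \omega) \leq 2m$; this is manifestly an invariant of the complex Mal'cev completion. Combining everything gives a quotient of $G$ of size at most $p^{2k(G)+1} = O(\log^{2k(G)+1}(r))$, as required.

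The principal obstacle is transferring the rank inequality from characteristic zero to characteristic $p$: the locus $\{\lambda : \rank(\lambda \circ \omega) \leq 2k(G),\ \lambda(g) \neq 0\}$, nonempty over $\C$ by definition of $k(G)$, must also have $\F_p$-points for almost every prime $p$. This reduces to a Nullstellensatz / Lang--Weil-type argument on a suitable $\Z[1/M]$-scheme, but the forbidden hyperplane $\{\lambda(g) = 0\}$ changes with $g$, which demands a uniform statement. A secondary technicality is to check that the constructed Lie algebra surjection lifts to a genuine group homomorphism from the discrete $G$ rather than from $G^\C$; this follows by arranging that the denominators appearing in the Baker--Campbell--Hausdorff formula for the chosen Mal'cev basis are invertible modulo $p$, after excluding finitely many bad primes.
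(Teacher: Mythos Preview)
Your approach is essentially the paper's strategy, and your definition of $k(G)$ agrees with the paper's $d(\varphi^\C)/2$ (the paper proves in Lemma~5.3 that the basis formulation of $d(\varphi^\C)$ equals $\max_{v\neq 0}\min_{a:\,a^Tv\neq 0}\rank M_a$, which is your $2k(G)$). The structure of the argument---split on $g\in[G,G]$, bound coordinates by $O(r^2)$, reduce mod a small prime, project onto a Heisenberg quotient via a well-chosen functional---is exactly right.

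The genuine gap is the one you yourself flag: the transfer from $\C$ to $\F_p$. You observe that the forbidden hyperplane $\{\lambda(g)=0\}$ moves with $g$ and say this ``demands a uniform statement,'' but you do not supply one. A direct Lang--Weil argument on the determinantal locus $W=\{\lambda:\rank(\lambda\circ\omega)\le 2k(G)\}$ does not obviously give what you need, since you would have to know that $W(\F_p)$ is not contained in any hyperplane, uniformly in $p$. The paper's resolution is precisely to pass to the \emph{basis} formulation: the statement ``for every nonzero $z$ there exists $\lambda$ with $\lambda(z)\neq 0$ and small rank'' is equivalent to ``there exist $\lambda^{(1)},\ldots,\lambda^{(n)}$ forming a basis of $(\C^n)^*$, each with small rank.'' The latter is a single, $g$-independent system of polynomial equations over $\Z$ (determinant of the basis nonzero, all large minors of each $M_{\lambda^{(j)}}$ vanish), and Ax's theorem then gives a set of primes of density $\asymp x/\log x$ where the same basis exists over $\F_p$. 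For any such $p$ and any nonzero $\bar g\in\F_p^n$, at least one basis vector $\lambda^{(j)}$ satisfies $\lambda^{(j)}(\bar g)\neq 0$, and you are done. Without this reformulation your argument is incomplete.
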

\noindent	The constant $k(G)$ only depends on the complex Mal'cev completion $G^\C$ and is better that the previous known bound, as seen from the following proposition:
	\begin{prop} \label{prop_psiG_dG}
		There exists a family $(G_n)_{n \in \N}$ of torsion-free finitely generated two-step nilpotent groups $G_n$, such that $k(G_n) = 1$ for all $n \in \N$, implying that $\RF_G = \log^3$, but such that $\psi(G_n) \to \infty$.
	\end{prop}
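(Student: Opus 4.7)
The plan is to take $G_n = H_{2n+1}(\Z)$, the higher Heisenberg group of rank $n$: the group with generators $x_1, \ldots, x_n, y_1, \ldots, y_n, z$, relations $[x_i, y_i] = z$ for $1 \leq i \leq n$, and all other generator-commutators trivial. Each $G_n$ is torsion-free, finitely generated, and two-step nilpotent, of Hirsch length $2n+1$ with one-dimensional commutator subgroup $\langle z \rangle$. These groups are pairwise non-isomorphic (distinguished by Hirsch length), forming a genuine family.

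The complex Mal'cev completion $G_n^\C$ is the Heisenberg Lie algebra $\mathfrak{h}_{2n+1}(\C)$, whose commutator ideal is one-dimensional. Since the invariant $k(G)$ depends only on $G^\C$ (by Theorem \ref{thm_main_upper_bound}) and is defined so as to make the bound exact in the one-dimensional commutator case (cf.\ the abstract), we have $k(G_n) = 1$ for every $n$. Theorem \ref{thm_main_upper_bound} then yields $\RF_{G_n} \preceq \log^3$, and the exactness result for one-dimensional commutator promotes this to $\RF_{G_n} = \log^3$. Alternatively, the matching lower bound can be read off by retracting $G_n$ onto $H_3(\Z)$ (killing $x_2, \ldots, x_n, y_2, \ldots, y_n$) and invoking the classical result $\RF_{H_3} \succeq \log^3$, combined with the fact that residual finiteness growth does not increase under retraction.

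The remaining task is to verify that $\psi(G_n) \to \infty$. Unfolding the definition from \cite{pengitore2018corrigendum}, $\psi(G)$ is extracted from a Mal'cev basis adapted to the lower central series. For $G_n$, the obvious such basis consists of $2n$ weight-one generators (the $x_i, y_i$) together with a single weight-two generator ($z$); plugging this combinatorial data into Pengitore's recipe should produce a quantity that grows at least linearly in $n$. The main technical obstacle lies precisely here: the values $k(G_n) = 1$ and $\RF_{G_n} = \log^3$ fall out immediately from the results already stated in this paper, while the growth of $\psi$ requires a careful unpacking of the somewhat involved definition of $\psi$ in the corrigendum, and probably an explicit formula for $\psi$ in the higher-Heisenberg case, in order to rigorously conclude unboundedness rather than merely growth in a generic sense.
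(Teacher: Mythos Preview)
Your proposal has a fundamental error: for the higher Heisenberg group $H_{2n+1}(\Z)$ one does \emph{not} have $k(G_n)=1$, but rather $k(G_n)=n$, and correspondingly $\RF_{H_{2n+1}} = \log^{2n+1}$, not $\log^3$.

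Recall that in the paper's notation $2k(G)=d(\varphi^\C)$, and $d(\varphi^\C)$ is a minimax of ranks of linear combinations of the defining matrices $A_1,\ldots,A_n$. The group $H_{2n+1}(\Z)$ is an $\mathcal{I}(2n,1)$-group: the commutator map $\varphi:\Z^{2n}\times\Z^{2n}\to\Z$ is given by a \emph{single} skew-symmetric matrix $A$, namely the standard symplectic form, which has rank $2n$. With only one matrix there is no minimization to perform, so $d(\varphi^\C)=\rank_\C A = 2n$. Lemma~\ref{lem_one_matrix} then gives $\RF_{H_{2n+1}} = \log^{2n+1}$ exactly. You seem to have conflated ``the commutator subgroup is one-dimensional'' with ``$k(G)=1$''; these are unrelated. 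Moreover, $H_{2n+1}$ is non-singular over $\Q$, so Lemma~\ref{lem_bound_nonsing} gives $\psi(H_{2n+1}) = 2n+1$ as well. Thus for your family the two bounds $\psi(G_n)$ and $2k(G_n)+1$ coincide, and the example demonstrates no gap whatsoever.

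The paper's construction is more subtle and is really the heart of the matter. One needs groups whose $\C$-completion splits as a direct product of $n$ copies of $H_3$ (forcing $d(\varphi^\C)=2$), but which are nonetheless non-singular over $\Q$ (forcing $\psi = 2n+1$). The direct product $H_3(\Z)^n$ itself fails the second requirement. The paper obtains suitable $\Z$-forms via a Galois-twist: starting from $H_3^{\,n}$ and conjugating by a matrix built from the embeddings of a degree-$n$ Galois number field, one lands on an integral bilinear form that is non-singular over $\Q$ yet becomes the split form after extending scalars to the field (hence to $\C$). This $\Q$-versus-$\C$ discrepancy is precisely what Proposition~\ref{prop_psiG_dG} is meant to exhibit, and it cannot be produced by a family as transparent as the higher Heisenberg groups.
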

	We also prove a lower bound which agrees with the upper bound under a certain assumption, see Theorem \ref{thm_lower_bound}. If the commutator subgroup of the group has rank at most $2$, then we show that this condition is always satisfied. In fact, the authors have no knowledge of any example where the condition for the lower bound is not satisfied, providing evidence that the exact asymptotics of $\RF_G$ have been found for two-step nilpotent groups.
	
 	The outline of this paper is as follows. In Section \ref{sec:prel}, we formally (re)introduce the notion of residual finiteness growth and afterwards we recall the notion of Mal'cev completions. We also introduce an alternative description of two-step nilpotent groups in Section \ref{sec_twostep}. In Section \ref{sec_upper}, we continue to prove Theorem \ref{thm_main_upper_bound}, which is in fact a rephrased version of Theorem \ref{thm_upper_bound}. Proposition \ref{prop_psiG_dG} will be established in Section \ref{previousbound}. The subclass of the result refers to a class of non-singular groups. The notion of non-singularity is also defined in this section. Next, in Section \ref{sec_lower}, we establish the lower bound and its connection to the upper bound. Finally, in Section \ref{sec_examples}, we show the condition for the lower bound is satisfied, giving exact asymptotics, at least when the commutator subgroup has rank at most $2$.
	\section{Preliminaries}\label{sec:prel}
	\subsection{Residual Finiteness Growth} \label{sec_growth}
	In this section, we will introduce the notion of residual finiteness growth, as it was originally coined in \cite{bou2010quantifying}. It should be noted that in more recent papers, this function is sometimes called `normal residual finiteness growth', where `residual finiteness growth' is then used for a different non-equivalent function, as for example in \cite{bou2015residual}. From now on, the set $\{1,2, \ldots\}$ of natural numbers is denoted by $\N$, and we write $\N^\ast$ for $\N\cup \{0\}$.
	
	Throughout this section, let $G$ be a finitely generated, residually finite group with finite generating set $S$. The neutral element in a group will be denoted by the letter $e$. Let $e\neq g\in G$ be some non-trivial element. As $G$ is residually finite, there exists by definition a homomorphism $\phi: G \to Q$ to a finite group $Q$ such that $\phi(g)\neq e$. The divisibility function associates the minimal size of $Q$ to $g$:
		\begin{df}
		The \emph{divisibility function} $D_G: G \to \mathbb{N}\cup\{\infty\}$ is defined as
		\begin{equation*}
			D_{G}(g) =  \begin{cases} \min\left\{[G:N]\mid g\notin N, N \lhd G
				
				\right\} & \text{for } g \neq e,\\ \infty & \text{for } g = e. \end{cases}
		\end{equation*}
	\end{df}
	
\noindent	Since $G$ is finitely generated by $S$, we may define a word norm on $G$ as follows:
	\begin{df}
		The \emph{word norm} on $G$ via $S$ is defined as
		$$\norm{g}_{S} = \min\{k\mid g = s_1^{\pm1} \ldots s_k^{\pm1}, s_i \in S, k \in \mathbb{N}^\ast \}.$$
	\end{df}
\noindent	A metric ball $B_S(r)$ centered around $e$ with radius $r\in\mathbb{R}^+$ is therefore given by
	\begin{align*}
		B_S(r) &= \left\{g\in G \mid \norm{g}_S \leq r	\right\} \\ &= \left\{s_1^{\pm1} \ldots s_k^{\pm1} \mid s_i \in S, k \leq r	\right\} .
	\end{align*}
The residual finiteness growth encapsulates the interplay between the metric given by $\norm{\cdot}_S$ and the divisibility function $D_G(\cdot)$.
		\begin{df}
		The \emph{residual finiteness growth} with respect to $B_S$ is given by the function
		$$\RF_{G,S}: \mathbb{R}_{\geq 1} \to \mathbb{N}\cup\{\infty\} : r \mapsto \max\{D_G(g) \mid e\neq g \in B_S(r)\}.$$
	\end{df}
\noindent	It is the minimal function such that if $e\neq g\in B_S(r)$ is arbitrary, there exists a homomorphism $\varphi: G\to Q$ to a finite group $Q$ with $|Q| \leq \RF_{G,S}(r)$ such that $\varphi(g) \neq e$.
	
	Suppose now that $\left(B\left(n\right)\right)_{n\in\mathbb{N}}$ is some sequence of non-decreasing subsets of $G$ for which there exists a $C>1$ such that
	\begin{equation} \label{eq_metric}
	B_S\left(\frac{1}{C}n\right) \subset B(n) \subset B_S(Cn) 
	\end{equation}
	for all $n$ sufficiently large. In that case,
	$$\RF_{G,S}\left(\frac{1 }{C}n\right) \leq  \max\{D_G(g) \mid e\neq g \in B(n)\} \leq \RF_{G,S}(Cn).$$
	Let us therefore define the following relations:
	\begin{df}
	Let $f,g: \mathbb{R}_{\geq 1} \to \mathbb{R}_{\geq 1}$ be non-decreasing functions. We write 
	\begin{align*}f &\preceq g \Leftrightarrow \exists C >0: \forall r \geq \max\{1,1/C\}: f(r) \leq Cg(Cr);\\
		f&\approx g \Leftrightarrow f\preceq g \text{ and } g \preceq f\end{align*}
	\end{df}
	We have now made the observation below:
	\begin{lemma}
		If $\left(B(n)\right)_{n\in\mathbb{N}}$ is some sequence of non-decreasing subsets of $G$ satisfying equation \eqref{eq_metric}, then $$\RF_{G,S}(r) \approx \max\{D_G(g) \mid e\neq g \in B(\lfloor r \rfloor)\}.$$
	\end{lemma}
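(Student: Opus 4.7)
The plan is to exploit the chain of inequalities already observed in the paragraph preceding the lemma, namely
\[
\RF_{G,S}\left(\tfrac{n}{C}\right) \leq h(n) \leq \RF_{G,S}(Cn), \qquad h(n) := \max\{D_G(g) \mid e \neq g \in B(n)\},
\]
valid for every $n \in \N$ sufficiently large, where $C > 1$ is the constant from \eqref{eq_metric}. Both sides of the desired equivalence $\approx$ are functions of real $r \geq 1$, so only a routine transfer from integer to real arguments via the floor function remains, with small constants absorbed into the definition of $\preceq$.

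To establish $h \circ \lfloor \cdot \rfloor \preceq \RF_{G,S}$, I would substitute $n = \lfloor r \rfloor$ into the right inequality; monotonicity of $\RF_{G,S}$ then yields $h(\lfloor r \rfloor) \leq \RF_{G,S}(C \lfloor r \rfloor) \leq \RF_{G,S}(Cr)$ for all $r$ above some threshold, and the finitely many remaining small values of $r$ are covered by enlarging $C$. For the reverse direction, given $r \geq 1$, I would set $n := \lceil Cr \rceil$, so that $n/C \geq r$ and, for $r$ sufficiently large, $n \leq \lfloor 2Cr \rfloor$. Using that $h$ is non-decreasing (since the $B(n)$ are) together with the left inequality gives
\[
\RF_{G,S}(r) \leq \RF_{G,S}(n/C) \leq h(n) \leq h(\lfloor 2Cr \rfloor),
\]
which is the desired $\preceq$ with constant $2C$.

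There is no substantive obstacle here: the statement is essentially a formal rewording of the displayed inequalities preceding the lemma, and the argument reduces to careful bookkeeping of ceilings, floors, and thresholds. The only subtlety worth noting is that \eqref{eq_metric} is assumed merely for $n$ sufficiently large, so one must ensure the initial excluded interval is absorbed into the constant appearing in the definition of $\preceq$; this is automatic because both functions are non-decreasing and finite-valued, so enlarging the constant $C$ in $\preceq$ dominates the finitely many exceptional values.
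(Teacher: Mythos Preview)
Your proposal is correct and follows exactly the approach the paper intends: the paper presents this lemma as an immediate ``observation'' following the displayed chain $\RF_{G,S}(n/C) \leq h(n) \leq \RF_{G,S}(Cn)$, and you have simply filled in the routine floor/ceiling bookkeeping that the authors leave implicit. There is no substantive difference between your argument and the paper's.
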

	In particular, this condition is satisfied if $B(n) = B_T(n)$ for some alternative finite generating set $T$ of $G$, implying that $\RF_{G,S} \approx \RF_{G,T}$ for two different finite generating sets. Up to the equivalence $\approx$, the residual finiteness growth becomes a group invariant. We denote the equivalence class of $\RF_{G,S}$ by $\RF_G$. The lemma also shows that we may replace $B_S(r)$ in the definition of $\RF_{G,S}$ by balls that non-necessarily come from a word metric. For example, on $\Z^m$, we may replace the word norm by the euclidean norm.

	\subsection{Mal'cev Completions}
	In this section, we will recall the notion of Mal'cev completions of finitely generated, torsion-free nilpotent groups. Let $G_1$ and $G_2$ be two finitely generated torsion-free nilpotent groups.
	
	\begin{df}
		A group $G$ is called \emph{nilpotent} if there exists a finite-length series of normal subgroups of $G$ of the form
		$$\{e\} = G_0 \lhd G_1 \lhd \dots \lhd G_{c-1} \lhd G_c = G$$
		with $G_{i+1}/G_i \leq Z(G/G_i)$, where $Z(H)$ denotes the center of $H$. The minimal $c$ for which such a series exists is called the step length.
	\end{df}
	\begin{df}
	A torsion-free, finitely generated nilpotent group is called an \emph{$\mathcal{I}$-group}.
	\end{df}
	
	It is known that $\mathcal{I}$-groups are up to isomorphisms exactly the subgroups of $\Tr_1(n, \Z) \leq \GL(n, \Z)$, i.e. the $n\times n$ upper-triangular matrices with ones on the diagonal, see \cite[Chapter 5, Theorem 2]{sega83-1}. Consider also $\Tr_0(n, \C)$ of strictly upper triangular $n\times n$ matrices over $\C$. The exponential map of matrices forms a bijection $\exp: \Tr_0(n, \C) \to \Tr_1(n, \C)$, see \cite[Theorem 1.2.1]{corwin1990representations}, with its inverse given by $\log: \Tr_1(n, \C) \to \Tr_0(n, \C)$. Using this bijection, one can define a group isomorphism
	$$\Phi: (G, \cdot) \to (\log(G), \ast): g \in \Tr_1(n, \Z) \mapsto \log(g) \in \Tr_0(n,\C)$$
	where $v\ast w$ with $v,w\in \log(G)$ is given by the Campbell-Baker-Hausdorff formula
	$$v\ast w := v+w + \frac{1}{2}[v,w] + \sum_{e=3}^\infty q_e(v,w),$$
	with $[v,w] = vw-wv$ the Lie-bracket of matrices and $q_e(v,w)$ a rational linear combination of nested Lie bracket of lenght $e$, see \cite[Section 1.2]{corwin1990representations} for a more detailed description.
	
	Let $\F \subset \C$ be a field. Consider $\F\log(G)$, i.e. the $\F$-span of the matrices in $\log(G)$. Using that $v\ast w \in \log(G)$ for all $v,w \in \log(G)$ and the Campbell-Baker-Hausdorff formula is defined via Lie-brackets with rational coefficients, we have that $v'\ast w' \in \F\log(G)$ if $v',w' \in \F\log(G)$ (see \cite{sega83-1}), implying that $(\F\log(G), \ast)$ forms a group. One can think of it as the Mal'cev completion of $G$ over the field $\F$, or simply its $\F$-completion, although the following definition is more conventional:
	\begin{df}
		Let $\F$ be a field. The \emph{Mal'cev completion} of an $\mathcal{I}$-group $G$ over the field $\F$, or simply its $\F$-completion, is given by 
		$$G^\F := \exp( \F \log(G)).$$
		Here, $\exp$ is the matrix exponential and $\log$ its inverse on $\Tr_1(n, \C)$. The set $\F \log(G)$ is the $\F$-span of the matrices in $\log(G)$.
	\end{df}
	\begin{rem}
		The exponential map maps the Lie algebra $\Tr_0(n, \C)$ of strictly upper-triangular matrices diffeomorphically to the Lie group $\Tr_1(n, \C)$. The $\F$-completion is the group corresponding to the smallest Lie algebra over $\F$ containing $\log(G)$.
	\end{rem}
	\begin{rem}
		It should be noted that the embedding of the $\mathcal{I}$-group $G$ in $\Tr_1(n, \Z)$ is not unique. However, different embeddings result in isomorphic Mal'cev completions. Therefore, we will assume in the remainder of the article that the embedding is fixed.
	\end{rem}
	It is generally conjectured (see for example \cite[Conjecture 19.114]{MR3793294}) that two $\mathcal{I}$-groups are quasi-isometric if and only if their real Mal'cev completions are isomorphic. Here, the `if' statement is a consequence of the Schwarz-Milnor lemma and the fact that both groups form a cocompact lattice in the same Lie group. In this paper, we relate $\RF_G$ of a two-step $\mathcal{I}$-group $G$ to its complex Mal'cev completion. In light of this conjecture, this provides some evidence that $\RF_G$ is a quasi-isometric invariant, at least in all known examples.
		\subsection{Description of two-step Nilpotent Groups} \label{sec_twostep}
	In this part, we rewrite the description of a general two-step $\mathcal{I}$-groups for our purposes. This is based on the description of two-step $\mathcal{I}$-groups using alternating bilinear maps $\Z^m \times \Z^m \to \Z^n$ from \cite[Chapter 11C, Proposition 4]{sega83-1}. More precisely, we have the following result:
	\begin{df}
		A map $\varphi: \Z^m \times \Z^m \to \Z^n$ is called
		\begin{enumerate}[(i)]
			\item \emph{bilinear}, if $\varphi(w_1+kw_2, w_3) = \varphi(w_1,w_3) + k\varphi(w_2,w_3)$ and $\varphi(w_1,w_2 + k w_3) = \varphi(w_1,w_2) + k\varphi(w_1,w_3)$ for all $w_1,w_2,w_3 \in \Z^m$ and all $k \in \Z$,
			\item \emph{alternating}, if $\varphi(w_1,w_2) = - \varphi(w_2, w_1)$ for all $w_1,w_2 \in \Z^m$,
			\item \emph{full}, if $\langle \varphi(w_1,w_2) \mid w_1,w_2 \in \Z^m \rangle$ is a finite index subgroup of $\Z^n$.
		\end{enumerate}
	\end{df}
	\begin{df}
		The free two-step nilpotent group on $m$ generators $F_{m,2}$ is defined as the quotient group $F_m/\gamma_3(F_m)$, where $F_m$ is the free group on $m$ generators and $\gamma_3(F_m)$ is the third term of its lower central series.
	\end{df}
	Note that $(F_{m,2}/[F_{m,2},F_{m,2}]) \cong \Z^m$. If $x$ is an element of $F_{m,2}$, we will write $\overline{x} \in \Z^m$ for the element $x[F_{m,2},F_{m,2}]$ in $(F_{m,2}/[F_{m,2},F_{m,2}]) \cong \Z^m$.
	\begin{thm}
		Each two-step $\mathcal{I}$-group is isomorphic to a group of the form $H_\varphi = (F_{m,2} \times \Z^n)/K_\varphi$ with $K_\varphi = \langle ([x,y]^{-1}, \varphi(\overline{x}, \overline{y}))\mid x,y \in F_{m,2}\rangle$, where $F_{m,2}$ is the free two-step nilpotent group of rank $m$ and $\varphi: (F_{m,2}/[F_{m,2},F_{m,2}])\times (F_{m,2}/[F_{m,2},F_{m,2}]) \to \Z^n$ is a full, alternating, bilinear map. 
	\end{thm}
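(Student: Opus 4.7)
The plan is to construct an explicit surjective homomorphism $\pi : F_{m,2} \times \Z^n \twoheadrightarrow G$ whose kernel is of the form $K_\varphi$. The most delicate point is to choose the right central subgroup $Z \leq G$ to play the role of $\Z^n$. Taking $Z = Z(G)$ would make $G/Z$ torsion-free but need not make $\varphi$ full, since $[G,G]$ is not always of finite index in $Z(G)$ (e.g.\ for $G = H_3 \times \Z$ with $H_3$ the Heisenberg group); taking $Z = [G,G]$ guarantees fullness but $G/[G,G]$ may have torsion. The correct choice is the isolator $Z = \{g \in G : g^k \in [G,G] \text{ for some } k \in \N\}$, and I would verify: (i) $Z \leq Z(G)$, using that for $g^k \in [G,G]$ and $h \in G$, bilinearity of the commutator in a two-step group gives $[g,h]^k = [g^k, h] = e$, hence $[g,h] = e$ by torsion-freeness; (ii) $[G,G]$ has finite index in $Z$ by construction; (iii) $G/Z$ is torsion-free abelian, hence $\cong \Z^m$; and $Z \cong \Z^n$ as a finitely generated torsion-free abelian group.

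Fix a $\Z$-basis $f_1,\dots,f_n$ of $Z$ and lifts $x_1,\dots,x_m \in G$ of a basis of $G/Z$. Define $\pi$ by sending the free generators of $F_{m,2}$ to $x_1,\dots,x_m$ and the standard basis of $\Z^n$ to $f_1,\dots,f_n$. Since the $f_j$ are central in $G$, the product map $(u,v) \mapsto \pi_1(u)\pi_2(v)$ is a well-defined homomorphism; the universal property of $F_{m,2}$ applies because $G$ is two-step nilpotent, and $\pi$ is surjective because the listed elements generate $G$.

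To identify $\ker \pi$, suppose $\pi(u,v) = e$. Projecting to $G/Z \cong \Z^m$ forces $\overline{u} = 0$, so $u \in [F_{m,2},F_{m,2}]$. The latter subgroup is free abelian on $\{[x_i,x_j] : i < j\}$ and canonically isomorphic to $\Lambda^2 \Z^m$ via $[x,y] \mapsto \overline{x}\wedge\overline{y}$; under this isomorphism $\pi$ restricted to $[F_{m,2},F_{m,2}]$ corresponds to the alternating bilinear form $\varphi : \Z^m \times \Z^m \to \Z^n$ determined by $\varphi(\overline{x_i},\overline{x_j}) = [x_i,x_j]_G$. A direct expansion of $u$ in this basis, compared with the defining generators of $K_\varphi$, yields $\ker \pi = K_\varphi$, and fullness of $\varphi$ follows from $\Im \varphi = [G,G]$ having finite index in $Z$ by (ii).

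The main obstacle is the first step: recognizing that neither $Z(G)$ nor $[G,G]$ is the right central subgroup, and that the isolator of $[G,G]$ is exactly what simultaneously makes $G/Z$ torsion-free and $\varphi$ full. Once $Z$ is fixed, the rest reduces to unwinding the universal properties of $F_{m,2}$ and of the exterior square $\Lambda^2 \Z^m$.
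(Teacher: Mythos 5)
The paper itself does not prove this theorem — it cites \cite[Chapter 11C, Proposition 4]{sega83-1} — so there is no in-text argument to compare against. Your proposal, however, is the standard proof and it is correct: the decisive idea is indeed to take $Z$ to be the isolator of $[G,G]$, which is simultaneously large enough to make $\varphi$ full and small enough to keep $G/Z$ torsion-free, and from there the argument is routine. A few small points worth fleshing out in a written version: (a) $K_\varphi$ lies in the center of $F_{m,2}\times\Z^n$ (since $[F_{m,2},F_{m,2}]$ is central in $F_{m,2}$), so it is automatically normal and $H_\varphi$ is well defined; (b) when identifying $\ker\pi$ with $K_\varphi$ one should track a sign — if $u=\prod_{i<j}[e_i,e_j]^{c_{ij}}$ then $\pi(u,v)=e$ forces $v=-\sum c_{ij}\varphi(\overline{e_i},\overline{e_j})$, which is the inverse of $\prod([e_i,e_j]^{-1},\varphi(\overline{e_i},\overline{e_j}))^{c_{ij}}\in K_\varphi$; and (c) the equation $\varphi(\overline{x_i},\overline{x_j})=[x_i,x_j]_G$ is an abuse of notation — the right-hand side lives in $Z\leq G$, and $\varphi$ should be defined as $\pi_2^{-1}\circ\bigl(\text{commutator}\bigr)$ so that it genuinely lands in $\Z^n$. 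None of these affect the substance of the argument.
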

	There is also a classification of these groups up to isomorphism, but this is not important for our discussion. In light of this characterization, we have the following definition:
	\begin{df}
		We say $H_\varphi$ is an \emph{$\mathcal{I}(m,n)$-group} if $\varphi$ is a full, alternating, bilinear map $\Z^m\times\Z^m \to \Z^n$.
	\end{df}
	\begin{rem}
		Since $\varphi$ has to be full, one can check that $m \leq n(n-1)/2$. Conversely, if $m \leq n(n-1)/2$, the class of $\mathcal{I}(m,n)$-groups is non-empty.
	\end{rem}
	
	In the discussion that follows, we will show that every group $H_\varphi$ of the theorem above can be written in a different form $G_\varphi$, which is more convenient for our computations. We will do so by `replacing' every element $(x,v)K_\varphi$ by a well-chosen representative in this coset of $F_{m,2}\times \Z^n$. 
	\begin{df}
		Let $\varphi: \Z^m\times \Z^m \to \Z^n$ be a full, alternating, bilinear form. With respect to the standard bases on $\Z^m$ and $\Z^n$, $\varphi$ is represented by $n$ skew-symmetric $m\times m$-matrices $A_1$ up to $A_n$, i.e.
		$$\varphi(w_1,w_2) = (w_1^TA_1w_2, \dots , w_1^TA_n w_2)^T \in \Z^n.$$
		Let $A_i^L$ be the matrix defined as the matrix $A_i$, where the upper triangular part is set to zero.
		
		Define the group $(G_{\varphi}, \cdot)$ as follows:
		\begin{itemize}
			\item as a set, $G_\varphi$ equals $\Z^m\times \Z^n$,
			\item the operation is given by $$(w_1,v_1)\cdot (w_2,v_2) = (w_1+w_2,v_1+v_2+\varphi^L(w_1,w_2)) ,$$
			where $\varphi^L$ is the bilinear map defined by the matrices $A_i^L$ ($1\leq i \leq n$).
		\end{itemize}
	It will be clear that $G_\varphi$ satisfies the group axioms from the next proposition.
	\end{df}

	\begin{prop} \label{prop_iso_Gvarphi}
		The groups $H_\varphi$ and $G_\varphi$ are isomorphic.
	\end{prop}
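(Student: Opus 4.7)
The plan is to construct an explicit isomorphism $\Psi: G_\varphi \to H_\varphi$ by sending $(w,v) \in \Z^m \times \Z^n$ to the coset of the normal-form representative $(e_1^{w_1}\cdots e_m^{w_m}, v)$. This uses the fact that in the two-step nilpotent group $F_{m,2}$ on generators $e_1, \dots, e_m$, every element admits a unique factorisation $e_1^{w_1}\cdots e_m^{w_m} c$ with $w \in \Z^m$ and $c \in [F_{m,2}, F_{m,2}]$, and that $[F_{m,2}, F_{m,2}]$ is free abelian with basis $\{[e_i, e_j] : i<j\}$. Accordingly, I would define
\[
\Psi: G_\varphi \longrightarrow H_\varphi, \qquad (w,v) \longmapsto \bigl(e_1^{w_1}\cdots e_m^{w_m},\, v\bigr) K_\varphi.
\]

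The main calculation is to verify that $\Psi$ respects the prescribed operation on $G_\varphi$. Centrality of $[F_{m,2}, F_{m,2}]$ together with $e_j e_i = e_i e_j [e_i,e_j]^{-1}$ for $i<j$ gives, by induction, the collection formula
\[
\bigl(e_1^{a_1}\cdots e_m^{a_m}\bigr)\bigl(e_1^{b_1}\cdots e_m^{b_m}\bigr)
= e_1^{a_1+b_1}\cdots e_m^{a_m+b_m}\,\prod_{i<j}[e_i,e_j]^{-a_j b_i}.
\]
From the generators of $K_\varphi$ one obtains a well-defined homomorphism $\psi: [F_{m,2},F_{m,2}] \to \Z^n$ with $\psi([e_i,e_j]) = \varphi(\overline{e_i}, \overline{e_j})$ such that $(c, -\psi(c)) \in K_\varphi$ for every $c$. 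Combining these two ingredients, the product $\Psi((a,u))\Psi((b,v))$ rewrites in $H_\varphi$ as $(e_1^{a_1+b_1}\cdots e_m^{a_m+b_m},\, u+v - \sum_{i<j} a_j b_i\,\varphi(\overline{e_i},\overline{e_j})) K_\varphi$. It then suffices to identify $-\sum_{i<j} a_j b_i\,\varphi(\overline{e_i},\overline{e_j})$ with $\varphi^L(a,b)$; this follows directly from the skew-symmetry of each $A_k$, since relabelling $(p,q) = (j,i)$ converts the $k$-th coordinate of this sum into $\sum_{p>q} a_p (A_k)_{pq} b_q = a^T A_k^L b$.

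For surjectivity, given any representative $(x,v)$ of a coset, one writes $x = e_1^{w_1}\cdots e_m^{w_m} c$ in normal form and uses $(c, -\psi(c)) \in K_\varphi$ to replace the coset with $(e_1^{w_1}\cdots e_m^{w_m}, v+\psi(c))K_\varphi$, which lies in the image of $\Psi$. For injectivity, it is enough to show $K_\varphi \cap (\{e\} \times \Z^n) = \{(e,0)\}$; this holds because the assignment $c \mapsto (c^{-1}, \psi(c))$ defines a group isomorphism $[F_{m,2},F_{m,2}] \to K_\varphi$ that splits the first projection, its well-definedness being guaranteed by the bilinearity of commutators in a two-step nilpotent group and of $\varphi$. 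The chief obstacle is the commutator-collection formula together with the sign bookkeeping needed to match $-\sum_{i<j} a_j b_i \varphi(\overline{e_i},\overline{e_j})$ with $\varphi^L(a,b)$ via skew-symmetry of $A_k$; once that identity is pinned down, everything else is a formality, and as a byproduct $G_\varphi$ inherits its group structure by transport along the bijection $\Psi$.
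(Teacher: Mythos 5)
Your proof is correct and follows essentially the same route as the paper's: the same map $(w,v)\mapsto(e_1^{w_1}\cdots e_m^{w_m},v)K_\varphi$ on base-form representatives, the same commutator-collection computation matching $\varphi^L$ (the paper's ``$\sum_{i>j}k_il_j\varphi(\overline{e_i},\overline{e_j})$'' is exactly your $-\sum_{i<j}a_jb_i\varphi(\overline{e_i},\overline{e_j})$ after relabelling), and surjectivity from the normal form in $F_{m,2}$. Your injectivity argument is a pleasant self-contained variant: where the paper proves Lemma \ref{lem_unique_base_form} by citing the short exact sequence $1\to\Z^n\to H_\varphi\to\Z^m\to 1$ from Segal, you identify $K_\varphi$ directly as the graph $\{(c^{-1},\psi(c))\}$ of a homomorphism $\psi:[F_{m,2},F_{m,2}]\to\Z^n$, from which $K_\varphi\cap(\{e\}\times\Z^n)=\{(e,0)\}$ is immediate.
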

	We start by observing what happens to the coset $(x,v)K_\varphi$ of $H_\varphi$ if the order of generators in $x$ is changed.
	\begin{lemma}
		Let $(x,v)K_\varphi$ be an element of $H_\varphi$ with $\varphi: \Z^m\times \Z^m \to \Z^n$. If $x\in F_{m,2}$ is given by the word $AabB$ where $a$ and $b$ are non-empty sub-words, then
		$$(x,v)K_\varphi = (AabB,v)K_\varphi = (AbaB,v+\varphi(\bar{a}, \bar{b}))K_\varphi.$$
	\end{lemma}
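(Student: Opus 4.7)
The plan is a direct computation in the group $F_{m,2}\times \Z^n$, followed by reduction modulo $K_\varphi$. The key fact driving everything is that $F_{m,2}$ is two-step nilpotent, so every commutator $[a,b]$ lies in the center, and the commutator map factors through the abelianization $F_{m,2}/[F_{m,2},F_{m,2}]\cong \Z^m$; in particular $[a,b]$ is well-defined in terms of $\bar a,\bar b$.

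First I would rewrite $x=AabB$ inside $F_{m,2}$. Since $[a,b]\in [F_{m,2},F_{m,2}]$ is central, we may freely move it to any position in the word; using the relation $ab=ba[a,b]$ (with whichever sign convention of the commutator one picks) we obtain
\[
AabB \;=\; AbaB\cdot [a,b] \qquad\text{in } F_{m,2}.
\]
Lifting this to the direct product $F_{m,2}\times\Z^n$ by pairing with a trivial second coordinate gives
\[
(AabB,v) \;=\; (AbaB,v)\cdot([a,b],0).
\]

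Next I would pass to the quotient by $K_\varphi$. By definition of $K_\varphi$ we have $\bigl([a,b]^{-1},\varphi(\bar a,\bar b)\bigr)\in K_\varphi$, hence its inverse $\bigl([a,b],-\varphi(\bar a,\bar b)\bigr)$ also lies in $K_\varphi$. Multiplying on the right by this element shows
\[
([a,b],0)\,K_\varphi \;=\; ([a,b],0)\cdot\bigl([a,b]^{-1},\varphi(\bar a,\bar b)\bigr)\,K_\varphi \;=\; (e,\varphi(\bar a,\bar b))\,K_\varphi.
\]

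Combining the two identities and using that $\Z^n$ is central in $F_{m,2}\times\Z^n$ yields
\[
(AabB,v)\,K_\varphi \;=\; (AbaB,v)\cdot(e,\varphi(\bar a,\bar b))\,K_\varphi \;=\; (AbaB,\,v+\varphi(\bar a,\bar b))\,K_\varphi,
\]
which is the claim. The only delicate point is confirming that $[a,b]$, defined a priori only up to the order of the sub-words, depends just on $\bar a,\bar b$; but this is immediate from centrality of the commutator subgroup in the two-step group $F_{m,2}$, so there is no real obstacle, merely bookkeeping.
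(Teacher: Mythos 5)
Your proof is correct, and it establishes the identity by the same essential mechanism as the paper: reduce the word rearrangement to a single generator of $K_\varphi$ using the definition of $K_\varphi$ together with the two-step commutator relation $ab = ba[a,b]$. The only genuine difference is organizational. The paper first proves the bare claim $(ab,0)K_\varphi = (ba,\varphi(\bar a,\bar b))K_\varphi$ and then reinserts the surrounding subwords $A$ and $B$ by appealing to the normality of $K_\varphi$ (needed because a coset is being multiplied on the right by $(B,v)$). You instead move the commutator factor to the end of the word using the centrality of $[F_{m,2},F_{m,2}]$, writing $(AabB,v)=(AbaB,v)\cdot([a,b],0)$ as an honest equality in $F_{m,2}\times\Z^n$, after which only left multiplication of cosets is used; this makes the appeal to normality unnecessary. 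That is a mild gain in transparency -- it highlights that $K_\varphi$ is generated by central elements, so its normality, while true, is never actually the driving fact -- but the underlying computation is the same, and neither route is more general than the other.
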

	\begin{proof}
		We claim that $(ab,0)K_\varphi = (ba, \varphi(\bar{a}, \bar{b}))K_\varphi$, i.e. $(ab, 0)\cdot (ba,\varphi(\overline{a}, \overline{b}))^{-1} \in K_\varphi$.
		Indeed, we can rewrite this product in the following way:
		\begin{equation*}
			\begin{split}
				(ab, 0)\cdot (ba,\varphi(\overline{a}, \overline{b}))^{-1} & = (ab, 0)\cdot (a^{-1}b^{-1},-\varphi(\overline{a}, \overline{b}))\\
				& = ([a^{-1},b^{-1}],\varphi(\overline{a^{-1}}, \overline{b}))\\
				& =  ([a^{-1},b]^{-1},\varphi(\overline{a^{-1}}, \overline{b})).\\
			\end{split}
		\end{equation*}
		In the last step, we used that $F_{m,2}$ is two-step nilpotent, and therefore, the commutator is bilinear. By the definition of $K_\varphi$, the claim is proven. 
		
		Now, the normality of $K_\varphi$ and the claim imply that
		$$(A,0)(ab,0)(B,v)K_\varphi = (A,0)(ba, \varphi(\bar{a}, \bar{b}))(B,v)K_\varphi .$$
		This gives the statement of the lemma.
	\end{proof}

	We now proceed to show the isomorphism.
	\begin{proof}[Proof of Proposition \ref{prop_iso_Gvarphi}]
		Fix standard generators $e_1, \ldots, e_m$ for $F_{m,2}$, which have now a fixed order. Their quotients in $\Z^m \cong F_{m,2}/[F_{m,2}, F_{m,2}]$ form a $\Z$-basis, denoted by $\overline{e_1}$ up to $\overline{e_m}$.
		
		Consider an arbitrary element $(x,v)K_\varphi$ in $H_\varphi$. This coset of $K_\varphi$ in $H_\varphi$ has a representative of the form
		$$(e_1^{k_1}e_2^{k_2}\ldots e_m^{k_m},v') = (\prod_{i=1}^me_i^{k_i},v') \text{ with } k_i \in \Z, v'\in \Z^n.$$
		Indeed, the previous lemma allows us to change the order of the generators $e_1$ to $e_m$ in the first component by changing the second component. We will call this form the base form, and we will show in Lemma \ref{lem_unique_base_form} that it is unique.
		
		We proceed to show how the base form behaves with respect to products. Let $(\prod_{i=1}^me_i^{k_i},v_1)$ and $(\prod_{i=1}^me_i^{l_i},v_2)$ denote two base forms. For ease of notation, we write $w_1$ for the vector $(k_1, \dots , k_m)^T \in \Z^m$ and analogously $w_2$ for the other.
		
		We have 
		\begin{equation} \label{eq_rewr_base_forms}
		(\prod_{i=1}^me_i^{k_i},v_1)\cdot (\prod_{i=1}^me_i^{l_i},v_2) = (\prod_{i=1}^me_i^{k_i}\cdot \prod_{i=1}^me_i^{l_i},v_1 + v_2).
		\end{equation}
		We need to bring this to base form. However, using that $(AabB,v)$ and $(AbaB,v+\varphi(\overline{a}, \overline{b}))$ represent to same cosets, one can show via induction that Equation \eqref{eq_rewr_base_forms} equals
		$$ (\prod_{i=1}^me_i^{k_i+l_i},v_1 + v_2+ \sum_{i>j} k_il_j \varphi(\overline{e_i}, \overline{e_j})) = (\prod_{i=1}^me_i^{k_i+l_i},v_1 + v_2+ \varphi^L(w_1,w_2)).$$
		
		By the observation, a homomorphism between $G_\varphi$ and $H_\varphi$ is now given by
		$$G_\varphi \to H_\varphi: (w,v) \mapsto (\prod_{i=1}^me_i^{k_i},v) K_\varphi \text{ with } w = (k_1, \dots , k_m)^T.$$
		It is surjective by the existence of the base form. For injectivity, we need to show that $(\prod_{i=1}^me_i^{k_i},v) \in K_\varphi$ implies that $k_i = 0, v = 0$.
		This corresponds to the unicity of the base form. It is proven in Lemma \ref{lem_unique_base_form} below.
	\end{proof}
	\begin{lemma} \label{lem_unique_base_form}
		Let $H_\varphi$ be the two-step $\mathcal{I}$-nilpotent group $(F_{m,2}\times \Z^n)/K_\varphi$. Then $(\prod_{i=1}^me_i^{k_i},v) \in K_\varphi$ if and only if $v=0$ and $k_i = 0$ for all $1\leq i \leq m$.
	\end{lemma}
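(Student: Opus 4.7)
The ``if'' direction is immediate since $(e,0) \in K_\varphi$. For the converse, the plan is to split the argument according to the two coordinates, using that $K_\varphi$ is in fact central in $F_{m,2} \times \Z^n$ because $[F_{m,2}, F_{m,2}] \subseteq Z(F_{m,2})$ (the group $F_{m,2}$ is two-step nilpotent) and $\Z^n$ is abelian. Consequently, every element of $K_\varphi$ can be written as a finite product
\[
\Big(\prod_j [a_j,b_j]^{-\epsilon_j},\; \sum_j \epsilon_j \,\varphi(\overline{a_j}, \overline{b_j})\Big)
\]
with $a_j,b_j \in F_{m,2}$ and $\epsilon_j \in \{\pm 1\}$, since both coordinates commute with one another freely.

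First I would handle the $k_i$'s by projecting onto the first coordinate modulo the commutator subgroup. The projection $F_{m,2} \times \Z^n \to \Z^m,\; (x,v) \mapsto \overline{x}$, sends every generator of $K_\varphi$ to $0$, so it sends the entire subgroup to $0$. Applied to $(\prod_i e_i^{k_i}, v)$, this shows $(k_1, \ldots, k_m) = 0$ in $\Z^m$, hence all $k_i$ vanish.

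It remains to show that $(e, v) \in K_\varphi$ forces $v = 0$. The key tool is that $[F_{m,2},F_{m,2}]$ is a free abelian group with basis $\{[e_i,e_j] \mid i < j\}$, canonically isomorphic to $\Lambda^2 \Z^m$ via $[e_i,e_j] \mapsto \overline{e_i} \wedge \overline{e_j}$, and under this identification the commutator map satisfies $[a,b] \mapsto \overline{a}\wedge \overline{b}$ by bilinearity of the commutator in a two-step nilpotent group. Since $\varphi$ is alternating and bilinear, it descends to a well-defined homomorphism $\tilde{\varphi}: \Lambda^2 \Z^m \to \Z^n$ with $\tilde{\varphi}(\overline{a}\wedge \overline{b}) = \varphi(\overline{a}, \overline{b})$. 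Writing the arbitrary element of $K_\varphi$ above in the form $(e,v)$ then yields, in $\Lambda^2 \Z^m$,
\[
\sum_j \epsilon_j\, (\overline{a_j} \wedge \overline{b_j}) = 0,
\]
and applying $\tilde{\varphi}$ to both sides gives $v = \sum_j \epsilon_j \,\varphi(\overline{a_j}, \overline{b_j}) = 0$.

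The only subtle point is justifying that $[F_{m,2}, F_{m,2}]$ is freely abelian with the claimed basis and that the commutator is bilinear there; both follow from standard facts about free nilpotent groups (see \cite[Chapter 11]{sega83-1}) and were implicitly used already in the previous lemma. The rest is a routine bookkeeping exercise with centrality, so no deeper obstacle is expected.
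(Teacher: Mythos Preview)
Your proof is correct, and it takes a genuinely different route from the paper's. The paper argues by quoting \cite[Chapter 11C, Exercise 7]{sega83-1} to assert that $v \mapsto (e,v)K_\varphi$ is an injection $\Z^n \hookrightarrow H_\varphi$ (equivalently, that $(0\times\Z^n)K_\varphi/K_\varphi \cong \Z^n$), and then reads off both conclusions from the resulting short exact sequence $1 \to \Z^n \to H_\varphi \to \Z^m \to 1$. Your argument for the $k_i$'s is the same as theirs, but for $v=0$ you instead unpack $K_\varphi$ explicitly: you use centrality to write any element of $K_\varphi$ as a product of generators, identify $[F_{m,2},F_{m,2}]$ with $\Lambda^2\Z^m$, and observe that the alternating bilinear map $\varphi$ factors through the induced $\tilde\varphi:\Lambda^2\Z^m\to\Z^n$, so that the vanishing of the first coordinate forces the vanishing of the second. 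This is more hands-on and self-contained: you are effectively \emph{proving} the injectivity that the paper imports from Segal. The paper's approach is shorter on the page but shifts the work to an external reference; yours makes the mechanism transparent and would be preferable in a context where one does not want to rely on that citation.
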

	\begin{proof}
		The `if' statement is clear. For the other direction, we know by \cite[Chapter 11C, Exercise 7]{sega83-1} that $\Z^n \cong (0\times\Z^n)K_\varphi/K_\varphi$ is the isolator of $[H_\varphi,H_\varphi]$ and that $H_\varphi/\Z^n \cong \dfrac{F_{m,2}}{[F_{m,2},F_{m,2}]} \cong \Z^m$. Hence, the group $H_\varphi$ fits in the short exact sequence
		\[\begin{tikzcd}
			1 & {\Z^n} & H_\varphi & {\dfrac{F_{m,2}}{[F_{m,2},F_{m,2}]} \cong \Z^m} & 1
			\arrow[from=1-1, to=1-2]
			\arrow["i", hook, from=1-2, to=1-3]
			\arrow["\pi", two heads, from=1-3, to=1-4]
			\arrow[from=1-4, to=1-5]
		\end{tikzcd}\]
		with $i(v) = (e,v)K_\varphi$ and $\pi((x,v)K_\varphi) = \overline{x}$. In particular, since $(\prod_{i=1}^me_i^{k_i},v)K_\varphi = K_\varphi$ by assumption, we have that
		$$0 = \pi(K_\varphi) = \pi((\prod_{i=1}^me_i^{k_i},v)K_\varphi) = \overline{\prod_{i=1}^me_i^{k_i}} = (k_1, \dots , k_m),$$
		and
		$$0 = i^{-1}(K_\varphi) = i^{-1}((e,v)K_\varphi) = v.$$
	\end{proof}
		\begin{rem}
		The inverse of $(w,v)$ in $G_\varphi$ is given by $(-w,-v+\varphi^L(w,w))$. Therefore, we observe that $[(w_1,v_1),(w_2,v_2)] = (0,\varphi^L(w_1,w_2)-\varphi^L(w_2,w_1))$. This equals precisely $(0, \varphi(w_1,w_2))$. This shows how to recover the map $\varphi$ from the group $G_\varphi$.
	\end{rem}
	
	We end this section by providing an example that we will use as recurrent example throughout the article. We also make a note concerning the Mal'cev completions of the groups $G_\varphi$.
	\begin{ex} \label{ex_Heis}
		The Heisenberg group over $\Z[i]$ is given by the matrix group
		$$H_3(\Z[i]) = \left\{ \begin{pmatrix}1&a_1+b_1i & a_2 + b_2i\\0&1&a_3+b_3i\\0&0&1\end{pmatrix}\mid a_i, b_i\in \Z\right\}.$$
		This matrix group can also be represented by $G_\varphi$ with $\Z^4\times \Z^2$ as a set and with $\varphi$ given by the matrices 
$$\begin{pmatrix}0&0&-1&0\\0&0&0&1\\1&0&0&0\\0&-1&0&0\end{pmatrix},\begin{pmatrix}0&0&0&-1\\0&0&-1&0\\0&1&0&0\\1&0&0&0\end{pmatrix},$$
		using the isomorphism
		$$\begin{pmatrix}1&a_1+b_1i & a_2 + b_2i\\0&1&a_3+b_3i\\0&0&1\end{pmatrix} \mapsto (a_3,b_3,a_1,b_1,a_2,b_2) .$$ 
		In \cite[Proposition 2.2]{pengitore2018corrigendum}, it is shown that $\RF_{H_3(\Z[i])} \preceq \log^4$. We will show in this paper that the residual finiteness growth of this $\mathcal{I}(4,2)$-group equals $\log^3$.
	\end{ex}
	\begin{rem}
		The groups $G_\varphi$ are as sets given by $\Z^m\times \Z^n$ with operation defined by $\varphi^L$. In Lemma \ref{lem_Lie_alg_twostep} and its remark, the $\F$-completion of $G_\varphi$, i.e. $(G_\varphi)^\F$, will be shown to be given by the set $\F^m\times \F^n$. Its operation is given by
		$$(w_1,v_1)\cdot (w_2,v_2) = (w_1+w_2,v_1+v_2+(\varphi^L)^\F(w_1,w_2)) ,$$
		where $(\varphi^L)^\F$ is the extension of $\varphi^L$ to a bilinear map $\F^m\times \F^m \to \F^n$. In this article, we will be interested in the complex completion $(G_\varphi)^\C$. 
		
		Throughout this article, let $\Z_p$ denote the field of $p$ elements for a prime $p$. In a way specified later, the behavior of the complex completion will be linked with the behavior of $(G_\varphi)^{\Z_p}$ for well-chosen primes $p$. This is the finite group with $\Z_p^m\times \Z_p^n$ as a set. The map $(\varphi^L)^{\Z_p}$ defining the operation is then given by the map 
		$$(\varphi^L)^{\Z_p}: \Z_p^m\times \Z_p^m \to \Z_p^n: (w_1,w_2) \mapsto (w_1^TA_{1,p}^Lw_2, \ldots,w_1^TA^L_{n,p} w_2),$$
		where $A_{i,p}^L$ is the matrix obtained by reducing the entries of $A_i^L$ modulo $p$.
		We will call $(G_\varphi)^{\Z_p}$ the $\Z_p$-reduction of $G$.
	\end{rem}

	\section{Proof of the Upper Bound} \label{sec_upper}
	In this section, we provide an upper bound for the residual finiteness growth in two-step nilpotent groups. This bound will depend on the complex Mal'cev completion. In the next section, we will also show that this bound improves the currently best known bound in literature (see \cite[Theorem 3.1]{pengitore2018corrigendum}). From now on, $G_\varphi$ will always be identified with $\Z^m\times \Z^n$ as a set.
	\subsection{Normal Subgroups}
	We begin by analyzing normal subgroups in $G_\varphi$. This will allow us to simplify the divisibility function in this setting, leading to the upper bound.
	\begin{rem}
		Consider the short exact sequence
		\[\begin{tikzcd}
			1 & {\Z^n} & G_\varphi & {\Z^m} & 1,
			\arrow[from=1-1, to=1-2]
			\arrow["i", hook, from=1-2, to=1-3]
			\arrow["\tau", two heads, from=1-3, to=1-4]
			\arrow[from=1-4, to=1-5]
		\end{tikzcd}\]
		where $i: \Z^n \to G_\varphi: v \mapsto (0,v)$ and $\tau: G_\varphi \to \Z^m: (w,v) \mapsto w$. Let $N$ be a normal subgroup of $G_\varphi$. Due to the sequence above, we have the following equality:
		$$[G:N] = [\Z^n: N\cap \Z^n]\cdot [\Z^m: \tau(N)],$$
		where $N\cap \Z^n = \{v\in \Z^n\mid (0,v) \in N\}$ and $\tau(N) = \{w \in \Z^m\mid \exists v \in \Z^n: (w,v)\in N\}$. Both $\tau(N)$ and $N\cap \Z^n$ are (normal) subgroups of free abelian groups. Hence, they may be interpreted as $\Z$-modules $B$ and $D$ respectively. 
	\end{rem}
	\begin{lemma} \label{lem_existence_NBD}
		Let $B$ be a subgroup of $\Z^m$ with basis $\{w_1,w_2 \dots w_m\}$ and $D$ a subgroup of $\Z^n$. Suppose $\varphi(B, \Z^m) \subset D$, then
		$$N_{B,D} = \langle (w_i,0), (0,v)\mid 1\leq i \leq m, v\in D\rangle $$
		is a normal subgroup of $G_\varphi$ of index $[\Z^m:B] \cdot [\Z^n:D]$.
	\end{lemma}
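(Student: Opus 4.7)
The plan is to exploit the short exact sequence $1 \to \Z^n \xrightarrow{i} G_\varphi \xrightarrow{\tau} \Z^m \to 1$ from the preceding remark, which gives the factorisation $[G_\varphi : N_{B,D}] = [\Z^n : N_{B,D} \cap \Z^n] \cdot [\Z^m : \tau(N_{B,D})]$. The proof then splits into computing each of these two indices and checking normality separately. The easy factor is $\tau(N_{B,D}) = B$: each generator of $N_{B,D}$ projects to either some $w_i$ or to $0$, and $\{w_1, \dots, w_m\}$ generates $B$.

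For the other factor, I would first observe that the elements $(0, v)$ are central in $G_\varphi$, since $\varphi^L(0, \cdot) = \varphi^L(\cdot, 0) = 0$, so every element of $N_{B,D}$ can be written as $h \cdot (0, d)$ with $h$ in the subgroup $H := \langle (w_i, 0) : 1 \leq i \leq m \rangle$ and $d \in D$. Consequently $N_{B,D} \cap \Z^n = (H \cap \Z^n) + D$, and the problem reduces to computing $H \cap \Z^n$. For this I would use the surjection $\pi : F_{m,2} \twoheadrightarrow H$ sending $x_i \mapsto (w_i, 0)$, which is well defined because $G_\varphi$ is two-step nilpotent. The composition $\tau \circ \pi$ factors through $F_{m,2}/[F_{m,2}, F_{m,2}] \cong \Z^m$, and because $\{w_i\}$ is a $\Z$-basis of $B$, the induced map $\Z^m \to B$ is an isomorphism, so $\ker(\tau \circ \pi) = [F_{m,2}, F_{m,2}]$ and hence $H \cap \Z^n = \pi([F_{m,2}, F_{m,2}])$. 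In a two-step nilpotent group commutators are bilinear, so the commutator subgroup is generated by the $[x_i, x_j]$, whose images are $\pi([x_i, x_j]) = [(w_i, 0), (w_j, 0)] = (0, \varphi(w_i, w_j))$ by the remark. It follows that $H \cap \Z^n = \{0\} \times \varphi(B, B)$, and the hypothesis $\varphi(B, \Z^m) \subset D$ places this inside $\{0\} \times D$. Combining, $N_{B, D} \cap \Z^n = D$, which yields the claimed index.

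Finally, normality follows from the fact that commutators are central: for $g = (w, v) \in G_\varphi$ and $h \in N_{B, D}$ with $\tau(h) = w' \in B$, one has $g h g^{-1} = h \cdot [g, h] = h \cdot (0, \varphi(w, w'))$ using the commutator formula from the remark, and $\varphi(w, w') = -\varphi(w', w) \in D$ by the hypothesis, so the extra factor lies in $\{0\} \times D \subset N_{B, D}$.

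The main obstacle is the identification $H \cap \Z^n = \{0\} \times \varphi(B, B)$: this is precisely where the assumption that the $w_i$ form a $\Z$-basis of $B$ (not merely a generating set) is needed, guaranteeing that the abelianisation map induces an isomorphism onto $B$. Everything else is a direct computation with the commutator identity in two-step nilpotent groups.
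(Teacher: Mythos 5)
Your proof is correct, and it takes a genuinely different route from the paper. The paper establishes $N_{B,D} \cap \Z^n \subseteq D$ by a direct combinatorial argument: after commuting the central generators $(0,v)$ to one side, it reduces to a product $\prod_j (w_{i_j},0)^{\epsilon_j}$ whose first component vanishes, pairs up mutually inverse generators using the basis property, and then runs an induction on the number of pairs, tracking how the second component changes as adjacent pairs are cancelled. You instead factor the index via the short exact sequence, reduce $N_{B,D}\cap\Z^n$ to $(H\cap\Z^n)+D$ using centrality of $0\times D$, and then compute $H\cap\Z^n$ structurally: the universal property of $F_{m,2}$ gives a surjection $\pi\colon F_{m,2}\twoheadrightarrow H$, the basis hypothesis on $\{w_i\}$ forces $\ker(\tau\circ\pi)=[F_{m,2},F_{m,2}]$, so $H\cap\Z^n=[H,H]=\langle(0,\varphi(w_i,w_j))\rangle\subseteq 0\times D$. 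This replaces the paper's explicit bookkeeping with a conceptual identification of $H\cap\Z^n$ with the commutator subgroup of $H$, and makes transparent exactly where the hypothesis that $\{w_i\}$ is a $\Z$-basis (not just a generating set) is used. Both arguments establish normality the same way, via the centrality of commutators and the inclusion $\varphi(B,\Z^m)\subseteq D$.
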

	\begin{proof}
		Let us first show that $N_{B,D} \cap \Z^n \subset D$. Indeed, take any $(0,v_0) \in N_{B,D}$ and write $(0,v_0)$ as a product of generators $g_1^{\epsilon_1}g_2^{\epsilon_2}\dots g_l^{\epsilon_l}$ with $l \in \mathbb{N}$ and $\epsilon_i \in \{1,-1\}$. If $g_i$ is a generator of the form $(0, v)$ for some $v \in D$, then it commutes with all other generators. Therefore, we may rewrite $(0,v_0)$ to 
		$$(0,v_0) = \left(\prod_{j=1}^k (w_{i_j},0)^{\epsilon_j}\right) \cdot \left( \prod_{j=k+1}^l (0, v_j)^ {\epsilon_j}\right),$$
		with $1 \leq i_j \leq m$ and $v_j \in D$. Since $\prod_{j=k+1}^l (0, v_j)^{\epsilon_j} $ is of the form $(0, v)$ with $v \in D$, it suffices to show that if
		$$(0,v_0) = \prod_{j=1}^k (w_{i_j},0)^{\epsilon_j},$$
		then $v_0 \in D$. 
		Recall that $(w, v)(w', v') = (w+w', v + v' + \varphi^L(w,w'))$ and $(w,v)^{-1} = (-w,-v+ \varphi^L(w,w))$. In particular, we have linear behavior in the first component. Looking at the projection onto this component of the product above, we get an expression
		$$\sum_{i=1}^m(n_i - n_i')w_i = 0,$$
		where $n_i$ denotes the times $(w_i,0)$ appears in the product and $n'_i$ denotes the times its inverse appears.	Since $\{w_1, \dots w_m\}$ is a basis of $B$, we know $n_i = n_i'$. Hence, we may form pairs of the form $\{(w_i,0) ; (-w_i, \varphi^L(w_i,w_i))\}$ of mutual inverses. In particular, $k$ is even, say $k = 2d$.
		
		We proceed by induction on the number $d$ to prove the claim. If $d = 1$, we have two possible options: either $(w_i,0)(-w_i, \varphi^L(w_i,w_i))$ or $(-w_i, \varphi^L(w_i,w_i))(w_i,0)$. Both equal $(0,0)$, satisfying the result.
		
		Suppose the claim holds up to value $d$, then we prove it for the value $d+1$. We assume the last element of the product is $(w_i,0)$. The other case is analogous. Suppose it is paired up with $(-w_i, \varphi^L(w_i,w_i))$ at position $1\leq e \leq 2d+1$. Let $(w_{<e},v_{<e})$ and $(w_{>e},v_{>e})$ denote the elements $\prod_{j=1}^{e-1} (w_{i_j},0)^{\epsilon_j}$ and $\prod_{j=e+1}^{2d+1} (w_{i_j},0)^{\epsilon_j}$ respectively.
		By the induction hypothesis, we know that
		\begin{equation*}
				(w_{<e},v_{<e})\cdot(w_{>e},v_{>e}) = (0, v_{<e}+v_{>e}+\varphi^L(w_{<e},w_{>e}))\in (0, D).
		\end{equation*}
		Now we find:
		\begin{align*}
			&(w_{<e},v_{<e})\cdot(-w_i, \varphi^L(w_i,w_i))\cdot(w_{>e},v_{>e})\cdot (w_i,0) \\ = & \hspace{1mm} (w_{<e},v_{<e})\cdot(-w_i, \varphi^L(w_i,w_i))\cdot(w_{>e} + w_i,v_{>e} + \varphi^L(w_{>e},w_i)) \\
			= & \hspace{1mm} (w_{<e},v_{<e})\cdot(w_{>e}, v_{>e} + \varphi^L(w_{>e},w_i) + \varphi^L(w_i,w_i) - \varphi^L(w_i, w_{>e}+w_i)) \\
			= & \hspace{1mm} (w_{<e},v_{<e})\cdot(w_{>e}, v_{>e} + \varphi(w_{>e},w_i)\\
			= & \hspace{1mm} (0, v_{<e} + v_{>e} + \varphi(w_{>e},w_i) + \varphi^L(w_{<e},w_{>e})).
		\end{align*}
		By the induction hypothesis $v_{<e} + v_{>e}  + \varphi^L(w_{<e},w_{>e})$ lies in $D$ and by the assumption on $B$, $\varphi(w_{>e},w_i) \in D$. The claim hence follows via induction.
		
		We will now argue that $N$ is a normal subgroup. Note that it is a subgroup by construction. Let $g = (w',v')$ be an arbitrary element in $G_\varphi$. For any $g \in G_\varphi$, conjugation of a generator gives us
		$$g^{-1}(w_i,0)g = (w_i, \varphi(w_i,w')) = (w_i,0)\cdot (0, \varphi(w_i,w')) \in N,$$
		since $\varphi(w_i,w') \in D$ by assumption, and
		$$g^{-1}(0,v)g = (0,v).$$
		
		As $\tau(N) = B$ and by the claim $N\cap \Z^n = D$, the statement about the index follows.
	\end{proof}
	\begin{rem}
		Although choosing another basis of $B$ might give a different normal subgroup also denoted by $N_{B,D}$, we will not specify the particular basis in future uses, since it does for example not affect the index of the subgroup nor the intersection $N_{B,D} \cap \Z^n = D$.
	\end{rem}
	\begin{rem}
		The $\Z_p$-reduction of a group $G_\varphi$, $(G_\varphi)^{\Z_p}$, is given by $G_\varphi/N_{B,D}$ with $B = p\Z^m$ and $D = p\Z^n$. In particular, there exists a homomorphism $G_\varphi \to G_\varphi^{\Z_p}$.
	\end{rem}
	\begin{ex} \label{ex_normal_subgroups}
		Consider $H_3(\Z[i])$, represented by $G_\varphi$ as in Example \ref{ex_Heis}, over $\C$, i.e. $G_\varphi^\C$, we find the bases
		$$w_1 = \begin{pmatrix}i\\1\\0\\0\end{pmatrix}, w_2 = \begin{pmatrix}0\\0\\i\\1\end{pmatrix}, w_3 = \begin{pmatrix}-i\\1\\0\\0\end{pmatrix}, w_4 = \begin{pmatrix}0\\0\\-i\\1\end{pmatrix} \subset \C^4, v_1 = \begin{pmatrix}2\\-2i\end{pmatrix}, v_2 = \begin{pmatrix}2\\2i\end{pmatrix} \subset \C^2.$$
		These base vectors satisfy
		$$\varphi^\C(w_k,w_{k+l}) = \begin{cases}
			v_1 & k=1,l=1,\\
			v_2 & k=3,l=1, \\
			0 & \text{elsewhere}.
		\end{cases} $$
		Hence, with respect to the given bases, $\varphi^\C$ is given by the matrices
		\begin{equation} \label{eq_good_basis_H3}
			\left(\begin{pmatrix}0&1&0&0\\-1&0&0&0\\0&0&0&0\\0&0&0&0\end{pmatrix},\begin{pmatrix}0&0&0&0\\0&0&0&0\\0&0&0&1\\0&0&-1&0\end{pmatrix}\right) .
		\end{equation}
		We can exploit this structure to make two normal subgroups $N_k$ ($k=1,2$) as follows: take a prime $p \equiv 1 \mod 4$. Then, there exists a number $x\in\Z$ such that $x^2 \equiv -1 \mod p$. Replacing $i\in \C$ by $x$, we obtain bases of $\Z_p^4$ and $\Z_p^2$. Now 
		$$N_k^{\Z_p} = \langle (w_{2k-1},0), (w_{2k},0), (0,v_k) \rangle, k\in \{1,2\}$$
		are normal subgroups of $G_\varphi^{\Z_p}$, or equivalently $N_{B_k,D_k}$ are normal subgroups of $G_\varphi$, where $B_k = \langle w_{2k-1}, w_{2k}, pw \mid w \in \Z^m \rangle$ and $D_k = \langle v_k, pv \mid v \in \Z^2 \rangle$, for any choice of basis of $B_k$. We can see that $|G_\varphi^{\Z_p}/N_k^{\Z_p}| = |G_\varphi/N_{B_k,D_k}| = p^3$.
	\end{ex}
	Let $\varphi$ be determined by the skew-symmetric matrices $A_1$ up to $A_n$. In the results below, we show that normal subgroups realizing $D_{G_\varphi}(0,v)$ may be assumed to be of the form $N_{B,D}$ for well-chosen $B$ and $D$. We use this to derive a formula for $D_{G_\varphi}(0,v)$.
	\begin{lemma} \label{lem_Dov_pre}
		Suppose $(0,v) \in G_\varphi$ with $v\neq 0$. Then there exists a normal subgroup of the form $N_{B,D}$ realizing $D_{G_\varphi}(0,v)$.
	\end{lemma}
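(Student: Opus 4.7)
The plan is to start from any normal subgroup $N$ that realises the divisibility $D_{G_\varphi}(0,v)$ and to show that the normal subgroup $N_{B,D}$ associated with $B = \tau(N)$ and $D = N \cap \Z^n$ works equally well. Concretely, I would proceed in three steps.

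First, I would use the short exact sequence from the remark preceding Lemma~\ref{lem_existence_NBD} to write
\[
[G_\varphi : N] = [\Z^m : B]\cdot[\Z^n : D],
\]
where $B = \tau(N)$ and $D = N \cap \Z^n$, so that any candidate replacement with the same $B$ and $D$ automatically has the same index as $N$. Second, I would verify that the hypothesis of Lemma~\ref{lem_existence_NBD} is satisfied, namely $\varphi(B,\Z^m) \subset D$. This is where normality of $N$ is used: if $(w,v') \in N$ with $w \in B$, then for any $w'' \in \Z^m$ the commutator $[(w'',0),(w,v')] = (0,\varphi(w'',w))$ must lie in $N$, and hence in $N\cap \Z^n = D$. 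Applying this for every generator of $B$ and using bilinearity yields $\varphi(\Z^m, B) \subset D$, i.e.\ $\varphi(B,\Z^m) \subset D$ since $\varphi$ is alternating.

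Third, Lemma~\ref{lem_existence_NBD} then produces the normal subgroup $N_{B,D}$ of index $[\Z^m:B]\cdot[\Z^n:D] = [G_\varphi : N]$, and crucially guarantees $N_{B,D}\cap \Z^n = D$. Since $(0,v)\notin N$ forces $v \notin D$, we get $(0,v) \notin N_{B,D}$. Hence $N_{B,D}$ is a normal subgroup of the required form whose index equals $D_{G_\varphi}(0,v)$ and which still separates $(0,v)$ from the identity, so it realises the divisibility.

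I do not expect any serious obstacle here; the only subtle point is the derivation of $\varphi(\Z^m, B) \subset D$ from the normality of $N$, which one must be careful to phrase correctly (one cannot immediately assume that $B\times \{0\} \subset N$, only that each $w \in B$ lifts to some $(w,v')\in N$, but the commutator $[(w'',0),(w,v')]$ only depends on $w$, so this is harmless).
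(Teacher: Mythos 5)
Your proof is correct and follows essentially the same line of reasoning as the paper: take a normal subgroup $N$ realizing the divisibility, set $B = \tau(N)$ and $D = N \cap \Z^n$, use normality of $N$ and the commutator identity $[(w'',0),(w,v')] = (0,\varphi(w'',w))$ to establish $\varphi(B,\Z^m) \subset D$, then invoke Lemma~\ref{lem_existence_NBD}. The only cosmetic difference is that the paper takes the commutator in the other order, $[(w,v_0),(w',0)] = (0,\varphi(w,w'))$, which avoids the (harmless) extra appeal to the alternating property; otherwise the argument, including the index computation via the short exact sequence and the verification that $(0,v)\notin N_{B,D}$, matches the paper's.
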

	\begin{proof}
		Suppose $N$ is a normal subgroup realizing $D_{G_\varphi}(0,v)$, i.e. $(0,v) \notin N$ and $D_{G_\varphi}(0,v) = [G:N]$. Set $D = N\cap \Z^n$ and $B = \tau(N)$.
		
		Let $w \in B$, i.e. $(w,v_0) \in N$ for some $v_0 \in \Z^n$. Since $N$ is a normal subgroup, we have that $[(w,v_0),(w',0)] \in N$ for all $w' \in \Z^m$. This expression equals $(0, \varphi(w,w'))$. Hence, $\varphi(w,w') \in D$. Since $w\in B$ and $w'\in \Z^m$ are arbitrary, we conclude that $\varphi(B, \Z^m) \subset D$.
		
		Fix a basis of $B$. By Lemma \ref{lem_existence_NBD}, we conclude that $N_{B,D}$ is a well-defined normal subgroup. Furthermore, $[G_\varphi: N] = [G_\varphi:N_{B,D}]$ and since $(0,v) \notin N_{B,D}$, we conclude that $D_{G_\varphi}(0,v)$ is also realized by $N_{B,D}$.
	\end{proof}
	\begin{nota}
		For a matrix $M \in \Z^{m\times m}$ and a prime power $p^k$, we define $\Im_{\Z_{p^k}}M$ and $\ker_{\Z_{p^k}}M$ to be the image and kernel of the map $\Z^m \to \Z^m_{p^k}: w \mapsto Mw \mod p^k$.	Let $a = (a_1, \ldots , a_n)$ be a vector in $\Z^n$. We denote $\gcd(a_1, \ldots ,a_n)$ by $\gcd(a)$.
	\end{nota}
From now on, we let $\mathcal{P}$ denote the set of all primes.
	\begin{lemma}\label{lem_Dov}
		Let $(0,v) \in G_\varphi$ be a non-trivial element. The value $D_{G_\varphi}(0,v)$ equals the following minimum over all prime powers $p^k$ and all $a = (a_1, \ldots , a_n)\in \Z^n$, subject to the conditions $\ggd(a) = 1$ and $a^Tv \not\equiv 0 \mod p^k$:
		$$\min_{p\in \mathcal{P}, k\in \N, a \in \Z^n}\Big\{|\Im_{\Z_{p^k}} \sum_{i=1}^n a_iA_i |\cdot p^k \;\Big| \ggd(a) = 1, a^Tv \not\equiv 0 \mod p^k\Big\}.$$
	\end{lemma}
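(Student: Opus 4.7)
The plan is to invoke Lemma \ref{lem_Dov_pre}, which allows me to restrict the computation of $D_{G_\varphi}(0,v)$ to normal subgroups of the form $N_{B,D}$ with index $[\Z^m:B]\cdot [\Z^n:D]$, and then identify the optimal choices. The key observation is that since $v+D$ is non-zero in the finite abelian group $\Z^n/D$, the most efficient $D$ is always the kernel of a surjection $\Z^n\to\Z_{p^k}$ (for some prime power $p^k$) not killing $v$, parametrized by a vector $a\in\Z^n$ via $u\mapsto a^Tu\bmod p^k$, while the largest compatible $B$ is forced to equal $\ker_{\Z_{p^k}}(\sum a_iA_i)$. I would then establish the two inequalities separately.

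For the upper bound $D_{G_\varphi}(0,v)\leq |\Im_{\Z_{p^k}}(\sum a_i A_i)|\cdot p^k$, I would fix $(p^k,a)$ with $\ggd(a)=1$ and $a^Tv\not\equiv 0\pmod{p^k}$. The condition $\ggd(a)=1$ makes the reduction $a^T\bmod p^k:\Z^n\to\Z_{p^k}$ surjective, so $D:=\ker(a^T\bmod p^k)$ has index $p^k$. Letting $M=\sum_{i=1}^n a_iA_i$ and $B:=\ker_{\Z_{p^k}}(M)$, the first isomorphism theorem gives $[\Z^m:B]=|\Im_{\Z_{p^k}}(M)|$. To apply Lemma \ref{lem_existence_NBD} I would verify $\varphi(B,\Z^m)\subseteq D$: for $w\in B$ and $w'\in\Z^m$, $a^T\varphi(w,w')=w^TMw'$, and since $Mw\equiv 0\pmod{p^k}$ combined with skew-symmetry of $M$, this is divisible by $p^k$. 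Hence $N_{B,D}$ is a normal subgroup of the claimed index not containing $(0,v)$ since $v\notin D$.

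For the reverse inequality I would use Lemma \ref{lem_Dov_pre} to choose $N_{B,D}$ with $[G_\varphi:N_{B,D}]=D_{G_\varphi}(0,v)$. Since $v+D$ is non-zero in the finite abelian group $\Z^n/D$, a standard structure-theorem argument produces a prime power $p^k$ and a surjection $\Z^n/D\twoheadrightarrow\Z_{p^k}$ not annihilating $v+D$; composing with $\Z^n\twoheadrightarrow \Z^n/D$ gives a surjective homomorphism of the form $u\mapsto a^Tu\bmod p^k$ with $a^Tv\not\equiv 0\pmod{p^k}$. Surjectivity forces $\ggd(a,p)=1$, and dividing $a$ by $\ggd(a)$ (a unit mod $p^k$, so that the kernel, the image of $M$, and $a^Tv\bmod p^k$ are all preserved) normalizes to $\ggd(a)=1$. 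With $D'=\ker(a^T\bmod p^k)\supseteq D$ and $B'=\ker_{\Z_{p^k}}(M)$, I would then show $B\subseteq B'$: any $w\in B$ satisfies $\varphi(w,w')\in D\subseteq D'$, so $w^TMw'\equiv 0\pmod{p^k}$ for all $w'\in\Z^m$, which gives $M^Tw\equiv 0\pmod{p^k}$ and, by the skew-symmetry $M^T=-M$, the defining condition $Mw\equiv 0$ of $B'$. Therefore $N_{B,D}\subseteq N_{B',D'}$, $(0,v)\notin N_{B',D'}$, and the index $|\Im_{\Z_{p^k}}(M)|\cdot p^k=[G_\varphi:N_{B',D'}]$ is at most $D_{G_\varphi}(0,v)$.

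I expect the main obstacle to be the enlargement step in the lower-bound argument: the constraint inherited from $\varphi(B,\Z^m)\subseteq D'$ naturally yields $M^Tw\equiv 0\pmod{p^k}$, whereas the defining condition of $B'$ involves $Mw$. It is precisely the skew-symmetry of the matrices $A_i$ that makes these conditions equivalent; without it, $\Im_{\Z_{p^k}}(M^T)$ and $\Im_{\Z_{p^k}}(M)$ could differ and the two bounds would fail to match. The normalization $\ggd(a)=1$ is a minor accompanying subtlety, resolved by dividing out a scalar coprime to $p$, which preserves both the kernel and the relevant image.
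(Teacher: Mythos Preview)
Your proposal is correct and follows essentially the same approach as the paper: reduce via Lemma~\ref{lem_Dov_pre} to subgroups $N_{B,D}$, pass from $D$ to a cyclic prime-power quotient $\Z_{p^k}$ described by a projection vector $a$, and identify the optimal $B$ with $\ker_{\Z_{p^k}}(\sum a_iA_i)$. The only differences are presentational: you split the argument explicitly into two inequalities and spell out the role of skew-symmetry of $M=\sum a_iA_i$ and the $\gcd(a)=1$ normalization, whereas the paper treats these points more implicitly (it phrases the condition as $\varphi(w',w)\in D$ rather than $\varphi(w,w')\in D$, which hides the use of $M^T=-M$, and it obtains $\gcd(a)=1$ directly from a Smith-basis choice for $D$).
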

	\begin{proof}
		Take a normal subgroup of the form $N_{B,D}$ realizing $D_{G_\varphi}(0,v)$. Now, $\Z^n/D$ is a finite abelian group. Hence, there exists an isomorphism 
		$$\psi:  \Z^n/D \to \oplus_{j=1}^d\Z_{p_j^{k_j}}$$
		for some $d\in \mathbb{N}$ and some prime powers $p_j^{k_j}$. Since $v\notin D$, we know that $\psi(v D) \neq 0$, so there exists a summand $\Z_{p_i^{k_i}}$ such that the projection of $\psi(v D)$ onto this summand is non-zero. Let $D'$ be the preimage in $\Z^n$ of the map $\Z^n \to \oplus_{j=1}^d\Z_{p_j^{k_j}} \to \Z_{p_i^{k_i}}$. Then, $v \notin D'$ and $[\Z^n:D'] = p_i^{k_i}$. Now, by Lemma \ref{lem_existence_NBD}, $N_{B,D'}$ is still a normal subgroup not containing $(0,v)$ of index not larger than $[G_\varphi: N_{B,D}]$. Hence, we may assume that $D = D'$.
		
		By what we have shown, we may assume $D$ has index $p^k$ and $\Z^n/D \cong \Z_{p^k}$. We can now take a basis of $\Z^n$, say $\{v_1, \dots v_n\}$, such that $\{p^kv_1, v_2, \dots v_n\}$ is a basis of $D$. Let $a$ denote the vector in $\Z^n$ defining the projection onto $\langle v_1 \rangle$ with respect to the previous basis, i.e. $a^T(\sum_{i=1}^nn_iv_i) = n_1$. Remark that $\ggd(a) = 1$ and $a^Tv \not\equiv 0 \mod p^k$.
		
		Now, $B$ needs to be as large as possible satisfying the inclusion $\varphi(w', w) \subset D$ for all $w\in B$ and $w' \in \Z^m$. Using the projection $a$, the inclusion $\varphi(w', w) \subset D$ can be rephrased to $a^T\varphi(w', w) \equiv 0 \mod p^k$. This equals $w'^T(\sum_{i=1}^na_iA_i)w \in \Z_{p^k}$. Using that this holds for all $w' \in \Z^m$, it follows that
		$$ (\sum_{i=1}^na_iA_i)w \in \Z_{p^k}^m$$
		for all $w\in B$. We conclude that $B$ must equal $\ker_{\Z_{p^k}} (\sum_{i=1}^na_iA_i)$.
		
		By Lemma \ref{lem_existence_NBD}, the index of $N_{B,D}$ is $|\Im_{\Z_{p^k}} (\sum_{i=1}^na_iA_i)|\cdot p^k$. This yields the desired minimum.
	\end{proof}
	\begin{ex} \label{ex_Dov}
		Continuing on Example \ref{ex_normal_subgroups}, we see that the normal subgroup $N_{B_1,D_1}$ is of the form appearing in the definition of $D_{G_\varphi}(0,v)$. Indeed, 
		$$\Z^2/D_1 \cong \Z^2_p/\langle v_1 \rangle \cong \Z_p.$$
		The projection $a$ is chosen such that $a^Tv \neq 0$ if $v\notin D_1$. Hence, modulo $p$, this is a vector such that $a^Tv \not\equiv 0 \mod p$ if $v \notin \langle v_1 \rangle$. Therefore, $a \mod p$ can be taken to be the projection onto $\langle v_2\rangle$. Now the modulo $p$ reduction of $B$ equals 
		\begin{equation*}
			\begin{split}
				B \mod p & = \ker_{\Z_{p}} (\sum_{i=1}^na_iA_i) \mod p\\
				& = \{w \in \Z_p \mid \forall w'\in \Z_p^4: a^T\varphi(w, w') \equiv 0 \mod p\} \\
				& = \{w \in \Z_p \mid \forall w'\in \Z_p^4: \varphi(w, w') \in \langle v_1\rangle\} \\
				& = \langle w_1,w_2 \rangle.
			\end{split}
		\end{equation*}
	\end{ex}

	\subsection{Proof of the Upper Bound}
	In this subsection, we will prove that $\RF_{G_\varphi} \preceq \log^d$ for some $d$, depending on $\varphi^\C$ or equivalently depending on the complex Mal'cev completion. Throughout this section, we assume $\varphi: \Z^m\times \Z^m \to \Z^n$ is given and defined by $A_1$ to $A_n$.
	\begin{df}
		Let $\psi: \C^m\times \C^m \to \C^n$ be an alternating, bilinear map defined by the complex matrices $A_1$ up to $A_n$. We define the number $d(\psi)$ to be
		$$d(\psi) = \min\{\max_{j=1}^n\{\rank_\mathbb{C}\sum_{i=1}^n a^{(j)}_iA_i\}\mid a^{(1)} \text{ to }a^{(n)}\text{ is a basis of }\mathbb{C}^n\}.$$
	\end{df}
	By definition, there exists a basis $\{a^{(1)}, \ldots , a^{(n)}\}$ of $\C^n$ such that for every $v\in\C^n$ we can take a base vector $a^{(j)}$ such that $(a^{(j)})^Tv \neq 0$ and 
	$$\rank_\mathbb{C}\sum_{i=1}^n a^{(j)}_iA_i \leq d(\psi).$$
	In fact, such a basis does not exist if we replace $d(\psi)$ by $d(\psi)-1$. 
	\begin{rem}
		Note that $d(\psi)$ is always even, since the rank of a skew-symmetric matrix is always even. In the remark following Proposition \ref{prop_connection_upper_lower}, we will comment on how one can attempt to calculate this number.
	\end{rem}
	\begin{ex}
		Example \ref{ex_normal_subgroups} shows that $d(\varphi^\C) = 2$. Indeed, we only need to take $a^{(1)}$ and $a^{(2)}$ to be the projections onto $v_1$ and $v_2$. With respect to the basis $\{w_1,w_2,w_3,w_4\}$, the matrices $a_1^{(j)}A_1+a_2^{(j)}A_2$ ($j\in\{1,2\}$)  are then given in Equation \eqref{eq_good_basis_H3}. These have rank $2$, and this is the smallest possible.
	\end{ex}

	The goal is to prove the following theorem:
	\begin{thm} \label{thm_upper_bound}
		Let $G_\varphi$ be a two-step $\mathcal{I}$-group defined by $\varphi:\Z^m\times \Z^m \to \Z^n$. We have $\RF_{G_\varphi} \preceq \log^{d(\varphi^\C)+1}$.
	\end{thm}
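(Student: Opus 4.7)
The plan is to split the argument into two cases according to the image $w = \tau(g) \in \Z^m$ of $g$ under the projection to the abelianization. I first observe that whenever $\norm{g}_G \leq r$, one has $\norm{w}_{\Z^m} \leq r$, so every coordinate of $w$ is bounded by $r$ in absolute value, and the central component $v$ of $g = (w,v)$ satisfies $|v_j| \leq C r^2$ for each $j$, since the bilinear correction $\varphi^L$ in the product formula contributes at most quadratically over a length-$r$ product.

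In the easy case $w \neq 0$, I pick a coordinate $w_i \neq 0$ and use the prime number theorem to find a prime $p = O(\log r)$ not dividing $w_i$, since $w_i$ has $O(\log r)$ prime factors. Taking $B = \{u \in \Z^m : p \mid u_i\}$, a subgroup of $\Z^m$ of index $p$, Lemma \ref{lem_existence_NBD} produces a normal subgroup $N_{B, \Z^n}$ of $G_\varphi$ of index $p$ not containing $g$, giving $D_{G_\varphi}(g) \leq p = O(\log r)$, comfortably within the target.

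The main case is $g = (0,v)$ with $v \neq 0$, where I bound the minimum in Lemma \ref{lem_Dov}. The goal is to exhibit a prime $p = O(\log r)$ and a vector $a \in \Z^n$ with $\gcd(a) = 1$, $a^T v \not\equiv 0 \pmod p$, and $\rank_{\Z_p} \sum_i a_i A_i \leq d(\varphi^\C)$; Lemma \ref{lem_Dov} will then yield $D_{G_\varphi}(g) \leq p^{d(\varphi^\C)} \cdot p = O((\log r)^{d(\varphi^\C)+1})$. The key ingredient is an algebraic descent: the set of $n$-tuples $(a^{(1)}, \ldots, a^{(n)}) \in (\C^n)^n$ with $\rank_\C \sum_i a^{(j)}_i A_i \leq d(\varphi^\C)$ for every $j$ is a variety defined over $\Q$, cut out by the vanishing of $(d(\varphi^\C)+1)$-minors; by the definition of $d(\varphi^\C)$, its intersection with the basis locus $\det \neq 0$ is nonempty and Zariski-open, so it has $\bar{\Q}$-points. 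I fix such a basis $\{a^{(j)}\}$ with entries in some number field $K$.

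For all but finitely many primes $p$ that split completely in $K$, reduction modulo a prime ideal $\mathfrak{p} \subset \mathcal{O}_K$ above $p$ sends $\{a^{(j)}\}$ to a $\Z_p$-basis $\{\bar{a}^{(j)}\}$ of $\Z_p^n$ and preserves the rank bound $\rank_{\Z_p} \sum_i \bar{a}^{(j)}_i A_i \leq d(\varphi^\C)$. By Chebotarev's density theorem combined with the prime number theorem, such good primes have positive density below any $N$, whereas the number of distinct prime divisors of a fixed nonzero $v_j$ is $O(\log r / \log \log r)$ by the Erd\H{o}s--Ramanujan bound $\omega(N) = O(\log N / \log \log N)$ applied to $|v_j| \leq Cr^2$. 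For $r$ large enough, I can therefore select a good prime $p = O(\log r)$ with $v \not\equiv 0 \pmod p$; some reduced $\bar{a}^{(j)}$ then satisfies $(\bar{a}^{(j)})^T v \not\equiv 0 \pmod p$, and any integer lift $a$ (normalized so $\gcd(a) = 1$, which only rescales by a unit mod $p$) meets every requirement. The main obstacle is this algebraic-geometric descent: ensuring that the $\C$-minimum defining $d(\varphi^\C)$ is realized by an algebraic basis, and that enough small primes allow a clean mod-$p$ reduction preserving both the basis property and the rank bound.
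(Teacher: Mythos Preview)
Your proof is correct and follows essentially the same route as the paper: the same case split on $w$, the same use of Lemma \ref{lem_Dov} to reduce to finding a prime $p$ and a vector $a$ with $a^Tv\not\equiv 0$ and $\rank_{\Z_p}M_a\le d(\varphi^\C)$, and the same metric estimate $\|v\|_\infty=O(r^2)$. The only genuine difference is how you produce the good primes: the paper packages the existence of a $\Z_p$-basis realizing the rank bound into a system of integer polynomial equations and then invokes the Ax--van den Dries theorem (Corollary \ref{cor_minimal_bases}) to get density $\asymp x/\log x$ for the primes where a solution exists; you instead descend the $\C$-basis to a number field $K$ and use Chebotarev for primes splitting completely in $K$. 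These two mechanisms are essentially equivalent---your argument is one standard way to prove the theorem the paper cites---so the approaches should be regarded as the same. One small inaccuracy: the set of rank-bounded bases is locally closed (closed rank condition intersected with the open basis locus), not Zariski-open, but this does not affect the existence of $\bar\Q$-points.
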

	In particular, this result tells us that $\RF_{G_\varphi}$ has an upper bound depending on $G_\varphi^\C$ as we remark below.
	\begin{rem}
		Let $P\in \GL(n, \C)$ and $Q\in \GL(m, \C)$. Suppose $\psi$ and $\psi'$ are two alternating, bilinear maps related by the formula 
		\begin{equation} \label{eq_equiv_C_compl}
		\psi(w,w') = P\psi'(Qw,Qw')
		\end{equation}
		for all $w,w'\in \C^m$. The matrices $P$ and $Q$ represent a change of basis in $\C^m$ and $\C^n$ respectively. Therefore, one easily sees that $d(\psi) = d(\psi')$.  
		
		If $G_{\varphi}$ and $G_{\varphi'}$ have isomorphic complex completions, then $\varphi^\C$ and $(\varphi')^\C$ satisfy a relation of the form given in Equation \eqref{eq_equiv_C_compl}, see for example the argumentation in \cite[Lemma 1]{grunewald1982nilpotent}. Therefore, the upper bound of Theorem \ref{thm_upper_bound} is a well-defined invariant of the $\C$-completion.
	\end{rem}
	
	To prove the theorem, we will need the following result, given in \cite[Corollary 3.11 and its remarks]{serre2016lectures}, but first established in \cite{ax1967solving,van1991remark}:
	\begin{thm}
		Let $l, s \in \mathbb{N}$. Suppose $V = \{q_j(x_1, \dots , x_l)\mid 1\leq j \leq s\}$ is a finite set of polynomials in $\Z[x_1, \dots , x_l]$. If these polynomials have a common zero over $\mathbb{C}$, then the set $P\subset \mathcal{P}$ of primes such that $V$ has a common zero over $\Z_p$ has density
		$$\pi_P(x) \asymp \dfrac{x}{\log(x)}.$$
	\end{thm}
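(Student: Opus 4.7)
The plan is to reduce the statement to the Chebotarev density theorem via a descent from $\C$ to $\overline{\Q}$. First, I would argue that a common complex zero of the $q_j$ forces a common zero with algebraic coordinates. The existence of a $\C$-zero of the $q_j$ means, by the Hilbert Nullstellensatz over $\C$, that $1 \notin I \cdot \C[x_1,\ldots,x_l]$, where $I = (q_1,\ldots,q_s) \subset \Z[x_1,\ldots,x_l]$. Since any relation $1 = \sum f_i q_i$ with coefficients in $\overline{\Q}[x_1,\ldots,x_l]$ would still hold inside $\C[x_1,\ldots,x_l]$, we also have $1 \notin I \cdot \overline{\Q}[x_1,\ldots,x_l]$, and the Nullstellensatz over $\overline{\Q}$ now produces a common zero $(\alpha_1,\ldots,\alpha_l) \in \overline{\Q}^l$. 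Let $K \supset \Q$ be a number field containing all the $\alpha_i$.

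Next, let $K'$ be the Galois closure of $K/\Q$, and let $S$ be the finite set of rational primes that either ramify in $K'$ or appear in a denominator of some $\alpha_i$ with respect to a fixed integral model. For each prime $p \notin S$ that splits completely in $K'$, every prime $\mathfrak{p}$ of $\mathcal{O}_{K'}$ above $p$ has residue degree one, so the residue field $\mathcal{O}_{K'}/\mathfrak{p}$ is canonically isomorphic to $\Z_p$. Reducing the $\alpha_i$ modulo $\mathfrak{p}$ therefore produces a common zero of the $q_j$ in $\Z_p^l$, and hence every such prime $p$ lies in $P$.

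Finally, I would invoke the Chebotarev density theorem, which guarantees that the primes splitting completely in $K'/\Q$ have natural density $1/[K':\Q] > 0$. Combined with the prime number theorem this yields the lower bound
\[
\pi_P(x) \;\geq\; \frac{1}{[K':\Q]}\,\pi(x)\,(1+o(1)) \;\gg\; \frac{x}{\log x},
\]
while the trivial bound $\pi_P(x) \leq \pi(x) \ll x/\log x$ furnishes the matching upper estimate, giving $\pi_P(x) \asymp x/\log x$. The conceptual main obstacle is the descent from $\C$ to $\overline{\Q}$: the cited works of Ax and van den Dries establish this via model-theoretic means (Lefschetz-type elementary equivalence of algebraically closed fields of characteristic zero), but the Nullstellensatz argument above provides an elementary algebraic alternative; once the descent is in hand, the rest is a standard combination of Chebotarev with the prime number theorem.
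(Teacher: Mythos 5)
Your proof is correct. The paper does not prove this theorem itself; it only cites Serre's \emph{Lectures on $N_X(p)$} (Corollary 3.11), tracing the result back to Ax and van den Dries, whose original arguments are model-theoretic. Your route is a self-contained number-theoretic one: first a Nullstellensatz descent to produce an algebraic common zero (if $1\notin I\cdot\overline{\Q}[x]$ fails, the witnessing Bézout relation would persist over $\C$ and kill the complex zero), then Chebotarev applied to the Galois closure of the number field generated by the coordinates. The descent step you give is indeed more elementary than invoking elementary equivalence of algebraically closed fields of characteristic zero, and it is the version of the argument one usually sees in the algebraic-number-theory literature. Two small points you handle correctly but that deserve emphasis for a referee: (1) you pass to the Galois closure $K'$ so that ``splits completely'' is governed by the identity conjugacy class and Chebotarev gives density $1/[K':\Q]$ directly — for a non-Galois $K$, complete splitting in $K$ is equivalent to complete splitting in $K'$, so no generality is lost; (2) you exclude the finitely many primes occurring in denominators of the $\alpha_i$ and the ramified primes, which is needed for the reduction map to $\mathcal{O}_{K'}/\mathfrak{p}\cong\Z_p$ to make sense and remain a ring homomorphism on all the coordinates. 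The resulting lower density is positive, the upper bound $\pi_P(x)\le\pi(x)$ is trivial, and the Prime Number Theorem converts both into $\asymp x/\log x$, exactly as claimed.
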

	Here, $\pi_P(x)$ is the prime counting function $|\{p \in P\mid p \leq x \}|$, and $\pi_P(x) \asymp x/\log(x)$ means that there exist $C_1,C_2>0$ such that
	$$C_1\dfrac{x}{\log(x)} \leq \pi_P(x)\leq C_2\dfrac{x}{\log(x)} $$
	for all $x$ sufficiently large. By \cite[Proposition 4.7]{dere2023residual}, this implies that $C>0$ exists such that for all $0\neq m \in \Z$ there exists a prime $p\in P$ with $m \notin p\Z$ and 
	\begin{equation} \label{eq_density}
	p \leq C\cdot \log(|m|) + C.
	\end{equation}
	\begin{cor} \label{cor_minimal_bases}
	The set of primes $P\subset \mathcal{P}$ defined by the property
		$$\min\{\max_{j=1}^n\{\rank_{\mathbb{Z}_p}\sum_{i=1}^n a^{(j)}_iA_i\}\mid a^{(1)} \text{ to }a^{(n)}\text{ is a basis of }\Z_p^n\} \leq d(\varphi^\C)$$
		has density
		$$\pi_P(x) \asymp \dfrac{x}{\log(x)}.$$
	\end{cor}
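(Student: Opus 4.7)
The plan is to encode the property defining $P$ as the solvability of a fixed system of polynomial equations with integer coefficients, so that the cited theorem of Ax--van den Dries applies directly. By the definition of $d(\varphi^\C)$, there exists a basis of $\C^n$ witnessing the minimum; this will play the role of a common complex zero of the system, and then the density statement for the primes $p$ where the system has a $\Z_p$-zero yields $P$.

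Concretely, I would introduce $n^2+1$ indeterminates: variables $X^{(j)}_i$ for $1 \leq i,j \leq n$, thought of as the coordinates of $n$ vectors $X^{(1)},\ldots,X^{(n)}$ in an $n$-dimensional space, together with an auxiliary variable $y$. For each $j$, consider the symbolic skew-symmetric matrix
\[
M_j \;=\; \sum_{i=1}^n X^{(j)}_i A_i \;\in\; \Z\bigl[X^{(j)}_1,\ldots,X^{(j)}_n\bigr]^{m\times m}.
\]
Let $V$ be the finite set of polynomials in $\Z[X^{(j)}_i, y]$ consisting of
\begin{enumerate}[(i)]
\item every $(d(\varphi^\C)+1)\times(d(\varphi^\C)+1)$ minor of $M_j$, for each $j=1,\ldots,n$;
\item the polynomial $y\cdot\det\bigl(X^{(1)}\,|\,\cdots\,|\,X^{(n)}\bigr)-1$.
\end{enumerate}
All coefficients lie in $\Z$ because the $A_i$ do. Over any field $\F$, a common zero of $V$ is exactly a tuple of vectors $a^{(1)},\ldots,a^{(n)}\in\F^n$ forming a basis (the auxiliary equation forces the determinant to be invertible) such that each $\sum_i a^{(j)}_i A_i$ has rank at most $d(\varphi^\C)$ over $\F$ (vanishing of all $(d(\varphi^\C)+1)$-minors).

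By the very definition of $d(\varphi^\C)$, a basis of $\C^n$ witnessing the value $d(\varphi^\C)$ exists, and the corresponding value of $y$ is just the inverse of its determinant; hence $V$ has a common zero over $\C$. Applying the theorem of Ax and van den Dries cited above, the set of primes $p$ for which $V$ admits a common zero over $\Z_p$ has density $\asymp x/\log(x)$. By construction, this is precisely the set
\[
\Bigl\{p\in\mathcal{P}\;\Bigl|\;\min\bigl\{\max_{j=1}^n\rank_{\Z_p}\sum_{i=1}^n a^{(j)}_i A_i \mid a^{(1)},\ldots,a^{(n)}\text{ basis of }\Z_p^n\bigr\}\leq d(\varphi^\C)\Bigr\},
\]
which is the set $P$ of the statement.

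The only subtlety is verifying the translation in both directions: a common $\Z_p$-zero must yield a basis of $\Z_p^n$ with the rank bound, and conversely such a basis provides a common zero. The first direction is immediate from the invertibility of the determinant forced by (ii) and the minor characterisation of rank over a field, and the second just uses that $\det\neq 0$ in $\Z_p$ lets us solve for $y$. No further structural work is required; the bulk of the content is already packaged into the cited theorem.
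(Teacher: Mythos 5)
Your proposal is correct and follows essentially the same route as the paper: encode the property as a system of integer-coefficient polynomials (minors of size $d(\varphi^\C)+1$ to bound the rank, plus the auxiliary equation $y\cdot\det(\cdot)-1=0$ to force a basis), observe a complex solution exists by definition of $d(\varphi^\C)$, and invoke the Ax--van den Dries density theorem. The only cosmetic difference is that you use only $(d(\varphi^\C)+1)$-minors, whereas the paper lists minors of all sizes $> d(\varphi^\C)$; these are equivalent.
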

	\begin{proof}
		By the definition of $d(\varphi^\C)$, we know that a basis realizing this bound exists over $\mathbb{C}$. In order to apply the previous theorem, it suffices to argue that this bound can be expressed as multivariate polynomials.
		
		Note that this bound means that there exists a basis $a^{(1)}$ up to $a^{(n)}$ of $\Z_p^n$ such that 
		$$\forall j \in \{1, \dots , n\}: \rank\sum_{i=1}^n a^{(j)}_iA_i \leq d(\varphi^\C).$$
		The existence of a basis can be expressed as
		$$\det(a^{(1)}, \dots , a^{(n)})\cdot \lambda - 1= 0.$$
		The bound on the rank can be expressed via minors, i.e. determinants of square submatrices. Indeed, saying that the rank is at most $d(\varphi^\C)$ is equivalent with saying that all minors of larger size are zero:
		$$\forall j \in \{1, \dots , n\}:\forall k>d(\varphi^\C): \forall (k\times k)\text{-minors } q \text{ of } \sum_{i=1}^n a^{(j)}_iA_i: q = 0.$$
		These expressions form a finite list of integer polynomials with variables $a^{(j)}_i$ and $\lambda$. 
	\end{proof}
	\begin{ex}
		In Example \ref{ex_normal_subgroups}, we constructed two normal subgroups $N^{\Z_p}_1$ and $N^{\Z_p}_2$. Recall that they are both defined via the basis $\{v_1,v_2\}$ of $\Z_p^2$. Corresponding to this basis, we find the basis $\{a^{(1)},a^{(2)}\}$ consisting of the projections onto $\{v_1,v_2\}$. In Example \ref{ex_Dov}, we illustrated that 
		$$\rank_{\mathbb{Z}_p}\Im\sum_{i=1}^n a^{(2)}_iA_i = \rank_{\Z_p} \langle w_3,w_4 \rangle = 2,$$
		as the kernel consisted of $\langle w_1,w_2\rangle$. We find the same rank for $a^{(1)}$, showing that
		$$\min\{\max_{j=1}^n\{\rank_{\mathbb{Z}_p}\sum_{i=1}^n a^{(j)}_iA_i\}\mid a^{(1)} \text{ to }a^{(n)}\text{ is a basis of }\Z_p^n\} \leq d(\varphi^\C) = 2$$
		at least for all primes $p\equiv -1 \mod 4$. In terms of density, this accounts for `half of all primes'.
	\end{ex}
	Recall that $D(0,v)$ equals
	$$\min_{p\in\mathcal{P}, k\in\N,a\in \Z^n }\Big\{|\Im_{\Z_{p^k}} \sum_{i=1}^n a_iA_i |\cdot p^k \;\Big| \ggd(a) = 1, a^Tv \not\equiv 0 \mod p^k\Big\}.$$
	In a first reduction step, we overestimate this minimum by only looking at a subset of primes $P\subset \mathcal{P}$, rather than all prime powers $p^k$.
	\begin{lemma} \label{lem_redu_to_primes}
		Take notations as in Lemma \ref{lem_Dov} and let $P$ be a subset of primes. We have
		$$D_{G_\varphi}(0,v) \leq \min\{p^{1+\rank_{\Z_p}\sum_{i=1}^n a_iA_i}\mid p\in P: a\in \Z_p^n: a^Tv \not\equiv 0 \mod p\}.$$
	\end{lemma}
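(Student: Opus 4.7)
The plan is to restrict the minimum expression for $D_{G_\varphi}(0,v)$ obtained in Lemma \ref{lem_Dov} to the subset of triples $(p,k,a)$ with $p \in P$ and $k = 1$; restricting the index set of a minimum can only increase its value, so this immediately produces an upper bound. Concretely, for every $p \in P$ and $a \in \Z^n$ with $\ggd(a) = 1$ and $a^T v \not\equiv 0 \mod p$, the triple $(p, 1, a)$ lies in the index set of Lemma \ref{lem_Dov}, and so $D_{G_\varphi}(0,v)$ is at most $|\Im_{\Z_p} \sum_i a_i A_i| \cdot p$.

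Next, I would simplify this contribution using that $\Z_p$ is a field. The map $\sum_i a_i A_i : \Z_p^m \to \Z_p^m$ is linear, so its image is a $\Z_p$-subspace of dimension $\rank_{\Z_p} \sum_i a_i A_i$; hence $|\Im_{\Z_p} \sum_i a_i A_i| = p^{\rank_{\Z_p} \sum_i a_i A_i}$. Multiplying by $p$ yields exactly the expression $p^{1 + \rank_{\Z_p} \sum_i a_i A_i}$ appearing on the right-hand side.

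The one point that requires a short verification is that the statement indexes $a$ over $\Z_p^n$ (subject to the nonzero-projection condition), whereas Lemma \ref{lem_Dov} indexes $a$ over $\Z^n$ with $\ggd(a) = 1$. To bridge this, I would check that every class $\bar a \in \Z_p^n$ with $\bar a^T v \not\equiv 0 \mod p$ admits a lift $a \in \Z^n$ with $\ggd(a) = 1$ reducing to $\bar a$. Take any lift $a^{(0)}$; since $\bar a \neq 0$, at least one coordinate of $a^{(0)}$ is coprime to $p$, so $p \nmid \ggd(a^{(0)})$. Any other prime divisor $q$ of $\ggd(a^{(0)})$ is coprime to $p$, so adding an appropriate multiple of $p$ to a suitable coordinate destroys divisibility by $q$ without altering the class modulo $p$; iterating over the finitely many prime factors of $\ggd(a^{(0)})$ produces the desired lift. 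Both $\rank_{\Z_p} \sum_i a_i A_i$ and the condition $a^T v \not\equiv 0 \mod p$ depend only on $a \mod p$, so the value of the bound is the same for $\bar a$ and any such lift.

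Combining these three steps immediately gives the inequality. There is no serious obstacle; the argument is essentially a specialization of Lemma \ref{lem_Dov} to prime moduli, together with the dimension-formula over a field and the elementary lifting observation above.
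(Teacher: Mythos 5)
Your approach is essentially the paper's: restrict the index set of Lemma \ref{lem_Dov} to prime powers $p^1$ with $p\in P$, then use that $|\Im_{\Z_p}\sum_i a_iA_i|=p^{\rank_{\Z_p}\sum_i a_iA_i}$ since $\Z_p$ is a field, and finally reconcile the indexing over $\Z^n$ (with $\ggd(a)=1$) against the indexing over $\Z_p^n$.

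The one place where you deviate is the last bridging step, and your version has a small flaw. You propose to lift each $\bar a\in\Z_p^n$ with $\bar a^Tv\not\equiv 0\bmod p$ to some $a\in\Z^n$ with $\ggd(a)=1$ \emph{and} $a\equiv\bar a\bmod p$. This is impossible in general: for $n=1$ the only integers with $\gcd$ equal to $1$ are $\pm 1$, so $\bar a\in\Z_p^\times$ has such a lift only when $\bar a=\pm 1\bmod p$. (And the case $n=1$ is genuinely relevant, as $\mathcal{I}(m,1)$-groups appear throughout.) Even for $n\geq 2$, the ``iterate over the primes of $\ggd(a^{(0)})$'' description is loose, since modifying a coordinate to kill one prime can introduce new prime factors; one really needs to fix a coordinate $a^{(0)}_i\not\equiv 0\bmod p$, note that any modification of the other coordinates leaves the $\gcd$ dividing $a^{(0)}_i$, and then use CRT in a single step. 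The paper avoids all of this by a scalar-invariance observation: if $a\in\Z^n$ has $a^Tv\not\equiv 0\bmod p$ then $b:=\ggd(a)$ is coprime to $p$, and replacing $a$ by $a/b$ gives $\ggd=1$ without changing $\rank_{\Z_p}\sum_i a_iA_i$ or the nonvanishing of $a^Tv\bmod p$, since both are invariant under multiplication by a unit of $\Z_p$. That argument is shorter, does not require constructing a lift that reduces exactly to $\bar a$, and covers $n=1$ as well. I recommend replacing your lifting argument by this unit-scaling observation.
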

	\begin{proof}
		In the minimum of $D(0,v)$, we are taking the minimum over all prime powers $p^k$. Restricting the minimum to prime numbers in $P$ will result in a larger number:
		$$D(0,v) \leq \min_{p\in P, a\in\Z^n}\Big\{|\Im_{\Z_{p}} \sum_{i=1}^n a_iA_i |\cdot p \Big| \ggd(a) = 1, a^Tv \not\equiv 0 \mod p\Big\}.$$
		The right hand side equals
		$$\min_{p\in P, a\in \Z^n}\Big\{p^{1+\rank_{\Z_p}\sum_{i=1}^n a_iA_i}\;\Big| \ggd(a) = 1, a^Tv \not\equiv 0 \mod p\Big\}.$$
		We claim this equals
		$$\min\{p^{1+\rank_{\Z_p}\sum_{i=1}^n a_iA_i}\mid p\in P: a\in \Z_p^n: a^Tv \not\equiv 0 \mod p\}.$$
		Indeed, $\ggd(a) = \ggd(a_1, \ldots , a_n)$ cannot be a multiple of $p$, because otherwise $a^Tv \equiv_p 0$. Suppose $\ggd(a) = b$. We just argued that $b$ is invertible in $\Z_p$, so $$\rank_{\Z_p}\sum_{i=1}^n a_iA_i = \rank_{\Z_p}\sum_{i=1}^n b^{-1}a_iA_i,$$
		$(b^{-1}a)^Tv \not\equiv 0 \mod p$ and $\ggd(a_1/b, \dots a_n/b) = 1$. In conclusion, any case where $\ggd(a) \neq 1$ reduces to a case where it is with the same rank/outcome.
	\end{proof}
	In the proof of Theorem \ref{thm_upper_bound}, we will use the estimate above for primes $P$ coming from Corollary \ref{cor_minimal_bases}. They satisfy the density property $\pi_P(x) \asymp x/\log(x)$. Given $(0,v) \in G_\varphi$, Equation \eqref{eq_density} allows us to take a prime $p\in P$ with $p \preceq \log(\norm{v}_\infty)$ such that $v \notin p\Z^n$. Here, $\norm{v}_\infty = \max\{|v_i|\}$. The following lemma tells us that if $(0,v) \in B_{G_\varphi}(r)$, where we take the standard generators of (the set) $\Z^m\times \Z^n$, then $\norm{v}_\infty \preceq r^2$. Hence, the chosen prime would satisfy $p \preceq \log(r)$. 
	\begin{lemma} \label{lem_2step_metric}
		There exists a constant $C>0$ such that if $(w,v) \in B_{G_\varphi}(r)$, then $\norm{w}_\infty \leq r$ and $\norm{v}_\infty \leq Cr^2$ with $\norm{v}_\infty = \max\{|v_i|\}$.
	\end{lemma}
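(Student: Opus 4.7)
The plan is to work in the description $G_\varphi = \Z^m \times \Z^n$ with product $(w_1,v_1)\cdot(w_2,v_2) = (w_1+w_2, v_1+v_2+\varphi^L(w_1,w_2))$ and take as generating set $S$ the standard generators $(\pm e_i, 0)$ for $1\leq i \leq m$ and $(0, \pm f_j)$ for $1\leq j \leq n$, where $e_i$ and $f_j$ are the standard basis vectors. Using the inverse formula from the remark after Lemma \ref{lem_unique_base_form}, namely $(w,v)^{-1} = (-w,-v+\varphi^L(w,w))$, together with the observation that each matrix $A_k^L$ has zero diagonal (because $A_k$ is skew-symmetric), one sees that $\varphi^L(e_i,e_i) = 0$, so inverses of generators are again in $S$. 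Thus every element of $B_{G_\varphi}(r)$ is a product of at most $r$ elements drawn from $S$.

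Next, I would establish by induction on $r$ the explicit multiplication formula
\[
g_1 \cdots g_r = \left(\sum_{k=1}^r a_k,\; \sum_{k=1}^r b_k + \sum_{1\leq j<k\leq r}\varphi^L(a_j, a_k)\right),
\]
where $g_k = (a_k, b_k)$. The induction step follows directly from the definition of the product, using that $\varphi^L$ is bilinear in its first argument.

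Now suppose $(w,v) = g_1\cdots g_r$ with each $g_k\in S$. Then each $a_k \in \{0, \pm e_1, \ldots, \pm e_m\}$ and each $b_k \in \{0, \pm f_1, \ldots, \pm f_n\}$. From the first coordinate of the formula we get $\norm{w}_\infty = \norm{\sum_k a_k}_\infty \leq r$, since each $a_k$ has $\infty$-norm at most $1$. For the second coordinate, set $M = \max_{1\leq k \leq n}\max_{1\leq i,j \leq m}|A^L_k[i,j]|$, which depends only on $\varphi$. Then for any $a_j, a_k$ in $S$-coordinates, $\norm{\varphi^L(a_j,a_k)}_\infty \leq M\norm{a_j}_\infty\norm{a_k}_\infty \leq M$. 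Hence
\[
\norm{v}_\infty \;\leq\; \left\Vert \sum_{k=1}^r b_k \right\Vert_\infty + \sum_{1\leq j<k\leq r}\norm{\varphi^L(a_j,a_k)}_\infty \;\leq\; r + M\binom{r}{2}.
\]
Choosing $C = 1 + M/2$ (say, for $r\geq 1$) gives $\norm{v}_\infty \leq Cr^2$, as desired. Since the equivalence class $\RF_G$ is independent of the generating set, this statement about $B_{G_\varphi}(r)$ with respect to $S$ suffices.

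The computation is routine; the only minor subtlety is the verification that $S$ is closed under inverses, which relies on the vanishing of the diagonal entries of $A_k^L$. Once the telescoping multiplication formula is in place, both bounds follow by elementary norm estimates.
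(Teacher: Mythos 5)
Your proof is correct and takes essentially the same approach as the paper: expand the $r$-fold product using the group law, isolate the quadratic contribution $\sum_{j<k}\varphi^L(a_j,a_k)$ via bilinearity, and bound term by term. The only stylistic difference is that you first observe that $A_k^L$ has zero diagonal (because $A_k$ is skew-symmetric), so $(\pm e_i,0)^{-1}=(\mp e_i,0)$ and the generating set is closed under inverses; this lets you write a clean telescoping formula with exactly $\binom{r}{2}$ cross terms, whereas the paper carries along the (actually vanishing) $\varphi^L(e_i,e_i)$ contributions from inverted generators and counts them separately, arriving at the same $O(r^2)$ bound.
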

	\begin{proof}
		Let $(w,v) \in B_{G_\varphi}(r)$. Define $C$ to be  $\max\{\norm{\varphi^L(e_i,e_j)}_\infty\mid 1\leq i,j \leq m\}$. By definition, the element $(w,v)$ can be written as
		$$(w,v) = \left(\prod_{j=1}^{r_1} (e_{i_j},0)^{\epsilon_j}\right) \cdot \left( \prod_{l=1}^{r_2} (0, e_{k_l})^ {\epsilon_l}\right),$$
		with $1 \leq i_j \leq m$, $1\leq k_l\leq n$, $\epsilon_s \in \{1,-1\}$ and $r_1+r_2 \leq r$.
		Since $(e_{i},0)^{-1} = (-e_i, \varphi^L(e_i,e_i))$ and $(\epsilon_i e_i,v)\cdot (w',v') = (\epsilon_i e_i+w',v+v'+\epsilon_i\varphi^L(e_i,w))$, we can (inductively) conclude that
		$$\prod_{j=1}^{r_1} (e_{i_j},0)^{\epsilon_j} = (\sum_{j=1}^{r_1}\epsilon_je_{i_j}, \sum \epsilon_{i_j,i_{j'}}
		\varphi^L(e_{i_j},e_{i_{j'}})),$$
	where $\epsilon_s \in \{1,-1\}$ and the second sum has at most $r_1 + r_1(r_1-1)/2 \leq r_1^2$ terms ($r_1\geq 1$). Indeed, one can have at most $r_1$ terms coming from inverted generators and at most $1+2+\ldots + (r_1-1)$ terms from multiplication. Now, it is clear that $\norm{w}_\infty \leq r_1 \leq r$ and
		$$\norm{v}_\infty \leq \sum\norm{\varphi^L(e_{i_j},e_{i_{j'}})}_\infty + \sum_{l=1}^{r_2} \norm{e_{k_l}}_\infty \leq C(r_1)^2 + r_2 \leq Cr^2.$$
	\end{proof}
	We are now ready to prove the upper bound.
	\begin{proof}[Proof of Theorem \ref{thm_upper_bound}]
		Let $(w,v) \in B_{G_\varphi}(r)$.
		We have either $w \neq 0$ or $w = 0$. If $w\neq 0$, then $w$ remains non-trivial under the homomorphism $G_\varphi \to \Z^m$. By the residual finiteness of $\Z^m$, we find some bound
		$$D(w,v) \preceq \log(r)$$
		for all those elements $(w,v)$ with $w\neq 0$.
		
		Write $P$ for the set of primes of Corollary \ref{cor_minimal_bases}. By Lemma \ref{lem_redu_to_primes}, we know that
		\begin{equation*}
				D(w,v) \leq \min\{p^{1+\rank_{\Z_p}\sum_{i=1}^n a_iA_i}\mid p\in P: a\in \Z_p^n: a^Tv \not\equiv 0 \mod p\}.
		\end{equation*}
		In what follows, we will reduce the $a\in \Z_p^n$ over which the minimum runs. Indeed, for the primes $p\in P$, we can set $a^{(1)}$ to $a^{(n)}$ to denote the basis realizing the minimum of Corollary \ref{cor_minimal_bases} for that prime. Now restrict the possible projections $a$ to this basis and use that the rank can be bounded by $d(\varphi^\C)$ for this basis. We obtain
		\begin{equation*}
			\begin{split}
				D(w,v) & \leq \min\{p^{1+\rank_{\Z_p}\sum_{i=1}^n a_iA_i}\mid p\in P: a\in \{a^{(1)}, \dots a^{(n)}\}: a^Tv \not\equiv 0 \mod p\}\\
				& \leq \min\{p^{1+d(\varphi^\C)}\mid p\in P: a\in \{a^{(1)}, \dots a^{(n)}\}: a^Tv \not\equiv 0 \mod p\}\\
				& \leq \min\{p^{1+d(\varphi^\C)}\mid p\in P: v \not\equiv 0 \mod p\}\\
				& \preceq \log^{1+d(\varphi^\C)}(r^2) \approx \log^{1+d(\varphi^\C)}(r).
			\end{split}
		\end{equation*}
		In the last steps, we used that $v \equiv 0$ if and only if $a^Tv\equiv 0$ for all $a\in \{a^{(1)}, \dots a^{(n)}\}$. Then, we applied the density result of the set $P$ (see Corollary \ref{cor_minimal_bases} and Equation \eqref{eq_density}) and Lemma \ref{lem_2step_metric}.
		
		All together, for all $(w,v) \in B_G(r)$, we find the bound
		$$D(w,v) \preceq \log^{d(\varphi^\C)+1}(r).$$
		This ends the proof.
	\end{proof}
	\begin{ex}
		In Example \ref{ex_normal_subgroups} and \ref{ex_Dov}, we constructed two normal subgroups $N^{\Z_p}_1$ and $N^{\Z_p}_2$. Recall that they are both defined via a basis $\{v_1,v_2\} \subset \Z_p^2$. The proof of Theorem \ref{thm_upper_bound} boils down to the following reasoning:
		\begin{enumerate}
			\item Given an arbitrary element $(0,v) \in G_\varphi$, take a prime $p \equiv -1 \mod 4$ such that $v\not\equiv 0 \mod p$. Now the image of $(0,v)$ is non-trivial under the map
			$$G_\varphi \to G_\varphi^{\Z_p} .$$
			\item The vector $v\mod p$ cannot lie in both $\langle v_1 \rangle$ and $\langle v_2 \rangle$ as they form a basis together. Suppose $v \mod p$ does not lie in $\langle v_1 \rangle$. Then the image of $(0,v)$ remains non-trivial under the composition
			$$G_\varphi \to G_\varphi^{\Z_p} \to G_\varphi^{\Z_p}/N^{\Z_p}_1 .$$
			This shows that $D(0,v) \leq p^3$.
			In the other case, one can use $N^{\Z_p}_2$ yielding $D(0,v) \leq p^3$ as well.
			\item Taking into account the density of primes $p\equiv -1\mod 4$, we know $p$ can be chosen proportional to $\log(\norm{v})$.
		\end{enumerate}
	We conclude that $\RF_{H_3(\Z[i])}$ equals $\log^3$, which improves the $\log^4$ upper bound of \cite[Proposition 2.2]{pengitore2018corrigendum}.
	\end{ex}
	\begin{rem}
	In Lemma \ref{lem_Dov} and Theorem \ref{thm_upper_bound}, the normal subgroups are obtained by reducing $G_\varphi$ modulo $p$ (or $p^k$). Then, we make sure we have a one-dimensional center $\Z^n/D$ in our quotient.
	
	One can also do the converse: make sure that $\Z^n/D$ is a one-dimensional $\Z$-module, and then reducing modulo $p$. This will not give optimal bounds. For example in $H_3(\Z[i])$ (see Example \ref{ex_Heis}), we have
	$$G_\varphi \to \dfrac{G_\varphi}{\langle (0,(1,0)) \rangle} \cong G'_{\varphi'},$$
	with $G'_{\varphi'}$ given by the set $\Z^4\times \Z$ with 
	$$\varphi' = \begin{pmatrix}0&0&0&-1\\0&0&-1&0\\0&1&0&0\\1&0&0&0\end{pmatrix}.$$
	As we have seen $\RF_{G_\varphi} = \log^3$, but $\RF_{G'_{\varphi'}} = \log^5$, since the matrix above has rank $4$ (see Lemma \ref{lem_one_matrix}). In fact, the same is true if we replace $(1,0)$ by any $v\in \Z^2$.
	\end{rem}
	\begin{rem}
		In the reduction steps of Theorem \ref{thm_upper_bound}, the minimum of $D(0,v)$ as in Lemma \ref{lem_Dov} seems largely overestimated. Yet, as will also be remarked in Section \ref{sec_lower}, no examples are known where this bound is not exact.
	\end{rem}
	
	\section{Comparison to the Previous Bound}
	\label{previousbound}
	In this section, we will demonstrate that our new upper bound improves the previously known bound in literature. In fact, the non-optimality of the previous bound was already communicated in \cite{pengitore2018corrigendum}. However, we show that the difference between the previous and the new bound can get arbitrarily large. This is done by using a class of $\mathcal{I}$-groups defined by non-singular (two-step nilpotent) Lie algebra.
	
	\subsection{Previous Bound and Non-Singularity}
	We start by reintroducing the bound given in \cite{pengitore2018corrigendum}. Then, we will proceed to introduce the notion of non-singularity. We can calculate the previous bound for non-singular $\mathcal{I}$-groups over $\Q$. In the next subsection, we will demonstrate that this bound is not an optimal predictor of $\RF_G$ in this subclass.
	\begin{df}
		Given a nilpotent group $G$, given a non-trivial, central element $g$, define $\psi(g)$ as
		$$\min\{h(G/N)\mid G/N \text{ irr, t.f.}, e\neq \langle\pi_N(g)\rangle \leq_f Z(G/N)\}. $$
		Here, irr. stands for irreducible, i.e. the quotient $G/N$ is no direct sum of groups. The abbreviation t.f. means torsion-free.
		
		Let $\psi(G)$ be the number
		$$\max\{\psi(g)\mid e\neq g \in Z(G), \exists k\in \N, \exists a \in G_{c-1}, \exists b \in G: g^k = [a,b]\} ,$$
		where $G_{c-1}$ determines the $c-1$'th term of the lower central series.
	\end{df}
	In \cite[Theorem 3.1]{pengitore2018corrigendum}, the following bound was stated.
	\begin{thm}
		Let $G$ be an $\mathcal{I}$-group, then $\RF_G \preceq \log^{\psi(G)}$.
	\end{thm}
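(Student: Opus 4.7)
The plan is to establish the bound by detecting any element $g \in G$ with $\|g\|_G \leq r$ in a finite quotient of size at most $p^{\psi(G)}$ for a prime $p$ of size at most a constant times $\log(r)$. The two inputs to combine are the Pengitore-type description of $\psi(G)$ as the Hirsch length of a well-chosen torsion-free irreducible quotient, and a Mal'cev/word-length estimate that controls the integer coordinates of $g$ polynomially in $r$.

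First, I would fix a Mal'cev basis for $G$ compatible with the lower central series $G = G_1 \rhd G_2 \rhd \dots \rhd G_c \rhd \{e\}$ and record the standard estimate that any element with $\|g\|_G \leq r$ has Mal'cev coordinates bounded by $C r^c$ for some constant $C = C(G)$; this is the analogue of Lemma \ref{lem_2step_metric} in higher nilpotency class and is the only way the parameter $r$ will enter. Next, I would reduce to the case that $g$ lies in $Z(G)$ and has the shape appearing in the definition of $\psi(G)$. The reduction proceeds by downward induction on the largest index $i$ with $g \notin G_i$: if $g$ is not deep in the lower central series, then the projection of $g$ to $G/G_i$ (which is nilpotent of smaller step) is non-trivial, and by the inductive hypothesis it is detected in a finite quotient of size at most $\log^{\psi(G/G_i)}(r) \leq \log^{\psi(G)}(r)$, using that $\psi$ can only decrease under such projections. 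The same reduction argument, together with quotienting by direct summands, lets one replace $g \in Z(G)$ by a central element of the precise ``commutator power'' form appearing in the definition.

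Once $g$ is a non-trivial central element of the distinguished form, I would invoke the definition of $\psi(g)$ directly: there exists a normal subgroup $N \lhd G$ such that $G/N$ is irreducible and torsion-free, $h(G/N) \leq \psi(G)$, and $\pi_N(g)$ generates a finite index subgroup of $Z(G/N)$. Since $\pi_N(g)$ is a non-trivial element of a free abelian group (the centre of the quotient) with coordinates bounded by a polynomial in $r$, I can choose a prime $p$ not dividing $\pi_N(g)$ with $p \leq C' \log(r)$ by an elementary density argument as in Equation \eqref{eq_density}. The group $(G/N)/p$ is a finite $p$-group of order at most $p^{h(G/N)} \leq p^{\psi(G)}$, the composition $G \to G/N \to (G/N)/p$ is a homomorphism to a finite group of order at most $p^{\psi(G)}$, and $g$ has non-trivial image by the choice of $p$. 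Combining this with the preceding step yields $D_G(g) \leq p^{\psi(G)} \preceq \log^{\psi(G)}(r)$.

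The main obstacle is the inductive reduction of the second step: one must ensure that when passing from $g$ to a projection in a smaller quotient, the relevant invariant $\psi$ does not grow, and that the distinguished central elements (powers of commutators involving an element of $G_{c-1}$) genuinely capture the worst case. This requires a careful bookkeeping of how irreducible torsion-free quotients interact with the lower central series of $G$, and is the step where the specific form of $\psi(G)$ as a maximum only over such deep central elements plays an essential role; the remaining ingredients (metric bound, prime selection, $p$-reduction) are standard once this structural reduction is in place.
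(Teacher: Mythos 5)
The paper does not contain a proof of this theorem: it is quoted directly from \cite[Theorem 3.1]{pengitore2018corrigendum}, so there is no internal proof against which to compare your attempt.

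Assessed on its own, your outline has the right mechanical skeleton: a polynomial bound on Mal'cev coordinates analogous to Lemma \ref{lem_2step_metric}, a density argument in the style of Equation \eqref{eq_density} to pick a prime $p \preceq \log r$ with $\pi_N(g) \not\equiv 0 \bmod p$, and a mod-$p$ reduction in a torsion-free irreducible quotient $G/N$ with $h(G/N) \leq \psi(G)$; these ingredients are sound, and the last step is essentially the higher-step analogue of the argument the paper itself carries out for $G_\varphi$. The genuine gap is the inductive reduction in your second paragraph. You assert both that $\psi$ can only decrease under projections along the lower central series and that the distinguished central elements (those with $g^k=[a,b]$, $a\in G_{c-1}$) capture the worst case, but neither is verified and neither is obvious. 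The definition of $\psi(G)$ takes a maximum only over those distinguished elements, so for a general non-trivial $g$ one must still argue that $D_G(g)\preceq \log^{\psi(G)}(r)$: if $g$ lies outside the isolator of $G_c$ you are implicitly using a comparison between $\psi$ of the lower-step quotient and $\psi(G)$, which requires a structural lemma relating irreducible torsion-free quotients across the lower central series, and if $g$ lies inside that isolator but is not itself of the distinguished form you need a further argument for why its $\psi$-value does not exceed $\psi(G)$. This bookkeeping is precisely where \cite{pengitore2018corrigendum} differs from the original paper it corrects, so it cannot be treated as routine without a concrete argument.
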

	Now recall the following definition:
	\begin{df}
		Let $\mathfrak{g}$ be a two-step nilpotent Lie algebra over $\mathbb{R}$. Then $\mathfrak{g}$ is called \emph{non-singular} if $\ad (X): \mathfrak{g} \to \mathfrak{z}(\mathfrak{g}): Y \mapsto [X,Y]$ is surjective for all non-central $X$, where $\mathfrak{z}(\mathfrak{g})$ is the center of $\mathfrak{g}$.
	\end{df}
	A non-singular Lie algebra is also called fat or regular.
	\begin{df}
		A two-step nilpotent Lie group (over $\R$) is called \emph{non-singular} if its Lie algebra is non-singular.
	\end{df}
	
	Given $G_\varphi$, we wish to relate non-singularity with some property of $\varphi$. In order to do so, we will first determine the Lie algebra corresponding to $G_\varphi$.
	\begin{lemma} \label{lem_Lie_alg_twostep}
		Suppose $\varphi: \Z^m\times \Z^m \to \Z^n$ is a full, alternating, bilinear map. Let $\mathfrak{g}$ denote the Lie algebra over $\mathbb{Q}$ defined as $\Q^m\times \Q^n$ with Lie bracket $[(w,v),(w',v')] = (0, \varphi^\Q(w,w'))$. Then, $\mathfrak{g}$ is the Lie algebra corresponding to the Mal'cev completion $G_\varphi^\Q$ of $G_\varphi$ (up to isomorphism).
	\end{lemma}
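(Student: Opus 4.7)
The plan is to exhibit an explicit Lie algebra isomorphism between $\mathfrak{g}$ and the Lie algebra $\Q\log(G_\varphi)$ of the Mal'cev completion $G_\varphi^\Q$, using the generators of $G_\varphi$ and the fact that the Baker--Campbell--Hausdorff series truncates in the two-step nilpotent setting.

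First, using the isomorphism $G_\varphi \cong H_\varphi$ from Proposition~\ref{prop_iso_Gvarphi}, fix an embedding $G_\varphi \hookrightarrow \Tr_1(N,\Z)$ for some $N$. Consider the natural generators $g_i := (e_i,0)$ for $1 \le i \le m$ and $h_j := (0,f_j)$ for $1 \le j \le n$ of $G_\varphi$, and set $X_i := \log(g_i)$, $Y_j := \log(h_j)$ in $\Tr_0(N,\C)$. Since the $h_j$ are central in $G_\varphi$, and the group is two-step nilpotent, every group commutator is central.

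Second, I invoke BCH. Because $G_\varphi$ is two-step nilpotent, for $A,B \in \log(G_\varphi)$ with $[A,B]$ central, one verifies directly that $\exp(A)\exp(B)\exp(-A)\exp(-B) = \exp([A,B])$, so $[\log(g),\log(h)] = \log([g,h])$ for all $g,h \in G_\varphi$. Applied to our generators this gives $[X_i,Y_j]=0$, $[Y_i,Y_j]=0$, and
\[
  [X_i,X_j] \;=\; \log\bigl([g_i,g_j]\bigr) \;=\; \log\bigl((0,\varphi(e_i,e_j))\bigr) \;=\; \sum_{k=1}^n \varphi(e_i,e_j)_k\, Y_k,
\]
where the final equality uses that $\log$ is linear on the central abelian subgroup generated by the $h_j$.

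Third, define the $\Q$-linear map $F : \mathfrak{g} \to \Tr_0(N,\C)$ by $F(w,v) = \sum_i w_i X_i + \sum_j v_j Y_j$. A direct bilinear computation using the bracket relations above gives $[F(w,v),F(w',v')] = \sum_l \varphi^\Q(w,w')_l Y_l = F\bigl([(w,v),(w',v')]\bigr)$, so $F$ is a Lie algebra homomorphism. It remains to show that $F$ is a bijection onto $\Q\log(G_\varphi)$. Surjectivity follows by expressing an arbitrary $g = (w,v) \in G_\varphi$ in base form $g = g_1^{w_1}\cdots g_m^{w_m}\cdot(0,v')$ and applying BCH iteratively: all brackets produced are central, hence lie in the $\Q$-span of the $Y_k$, so $\log(g)$ lies in the image of $F$. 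For injectivity, note that $G_\varphi$ is a torsion-free nilpotent group of Hirsch length $m+n$; its real Mal'cev completion is therefore a connected, simply connected Lie group of dimension $m+n$, so the Lie algebra $\C\log(G_\varphi)$ has $\C$-dimension $m+n$. Since this space is spanned by the $m+n$ elements $X_i, Y_j$, these must be $\C$-linearly independent, and a fortiori $\Q$-linearly independent.

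The main obstacle is purely bookkeeping: one must be careful that the truncation of BCH in the two-step setting genuinely yields $\log([g,h]) = [\log(g),\log(h)]$ (rather than merely up to a scalar), and that the central term produced when expanding $\log$ of a product of the $g_i$ lies in the rational span of the $Y_j$. Both hinge on the same observation that all iterated brackets in $\Q\log(G_\varphi)$ take values in $\sum_k \Q Y_k$, which matches precisely the image of $\{0\} \times \Q^n \subset \mathfrak{g}$ under $F$; once this is made explicit, the bracket identity $F\bigl([(w,v),(w',v')]\bigr) = F(0,\varphi^\Q(w,w'))$ closes the argument.
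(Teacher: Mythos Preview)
Your proof is correct but follows a genuinely different route from the paper's. The paper works entirely on the \emph{group} side: it equips $\Q^m\times\Q^n$ with the BCH product $(w,v)\ast(w',v') = (w+w',v+v'+\tfrac{1}{2}\varphi^\Q(w,w'))$, writes down the explicit embedding $i:G_\varphi\to G^\Q$, $i(w,v)=(w,v-\tfrac{1}{2}\varphi^L(w,w))$, checks by hand that $i$ is a homomorphism, and then verifies the defining axioms of the rational Mal'cev completion (torsion-free, radicable, every element has a power in $i(G_\varphi)$). You instead work on the \emph{Lie algebra} side: take logs of the standard generators inside an ambient $\Tr_0(N,\C)$, compute their brackets via the two-step BCH identity $\log([g,h])=[\log g,\log h]$, and build a linear isomorphism $F:\mathfrak{g}\to\Q\log(G_\varphi)$ whose bijectivity comes from a Hirsch-length dimension count.

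Both arguments are valid. The paper's approach is more self-contained and, crucially, produces the explicit formula for $i$ that is immediately reused in the remark following the lemma to describe $G_\varphi^\F$ for arbitrary fields $\F$; your proof does not yield this formula directly. Your approach is arguably closer in spirit to the lemma's statement (which is about Lie algebras), but it leans on two pieces of background you invoke without proof: that the Lie algebra of a two-step nilpotent $\mathcal{I}$-group is itself two-step nilpotent (needed so that $[X_i,X_j]$ is central and BCH truncates), and that the $\Q$-dimension of $\Q\log(G_\varphi)$ equals the Hirsch length $m+n$. Both are standard, but the paper's argument avoids them entirely by checking the Mal'cev axioms from scratch.
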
 
	\begin{proof}
		The Lie group $G^\Q$ corresponding to $\mathfrak{g}$ has its operation defined by the Baker-Campbell-Hausdorff formula (for $2$-step nilpotent groups):
		$$(w,v)\ast (w',v') = (w,v) + (w',v') + \frac{1}{2}[(w ,v),(w',v')] = (w + w^\prime,v + v^\prime +\frac{1}{2}\varphi^\Q(w,w')) .$$
		The group $G_\varphi$ injects into this group via the homomorphism:
		\begin{equation} \label{eq_embedd_lie_alg}
		i: G_\varphi \to G^\Q: (w,v) \mapsto (w,v-\frac{1}{2}\varphi^L(w,w)).
		\end{equation}
		Indeed, it is a homomorphism, because
		\begin{equation*}
			\begin{split}
				i\left((w,v)\cdot(w',v')\right) & =  i(w+w',v+v'+\varphi^L(w,w'))\\
				& = (w+w',v+v'+\varphi^L(w,w')-\frac{1}{2}\varphi^L(w+w',w+w'))\\
				& = (w+w',v+v'+\varphi^L(w,w')-\frac{1}{2}\varphi^L(w,w)-\frac{1}{2}\varphi^L(w,w')-\frac{1}{2}\varphi^L(w',w)-\frac{1}{2}\varphi^L(w',w')) \\
				& = (w+w',v+v'+\frac{1}{2}\varphi(w,w')-\frac{1}{2}\varphi^L(w,w)-\frac{1}{2}\varphi^L(w',w')) \\
				& = (w,v-\frac{1}{2}\varphi^L(w,w))\ast (w',v'-\frac{1}{2}\varphi^L(w',w')) \\
				& = i(w,v) \ast i(w',v'),
			\end{split}
		\end{equation*}
	and it is injective, because $(w, v-\frac{1}{2}\varphi^L(w,w)) = 0$ clearly implies that $(w,v) = 0$.
	
	One can verify that $G^\Q$ is a torsion-free radicable nilpotent group (containing $i(G)$), such that every element of $G^\Q$ has some power lying in $i(G)$. For example, for radicability we have $(w,v)^k = (kw,kv)$ for $k\in \Z$ and therefore $(w,v)^q = (qw,qv)$ for $q\in \Q$. For the last condition, if $(w,v)\in G^\Q$ is given, one takes $k\in \Z$ such that $kw$ and $kv$ are integral vectors. Now, $w' = 2kw$ and $v' = 2kv + 2\varphi^L(kw,kw)$ are integral vectors and
	$$(w,v)^{2k} = (2kw,2kv) = (w',v'-\frac{1}{2}\varphi^L(w',w')) = i(w',v') \in i(G_\varphi).$$
	By \cite[p.107]{sega83-1}, these properties mean that $G^\Q$ is the rational Mal'cev completion of $i(G_\varphi) \cong G_\varphi$. This ends the proof.
	\end{proof}
	\begin{rem}
		Note that the homomorphism in Equation \eqref{eq_embedd_lie_alg} can be extended to an isomorphism from $G_\varphi^\Q = (\Q^m\times \Q^n, \cdot)$ to $G^\Q$, where $G_\varphi^\Q$ was defined in the remark following Example \ref{ex_Heis}. This shows that $G_\varphi^\F$ is indeed the $\F$-completion of $G_\varphi$ as claimed there.
	\end{rem}
	Now, we can find the relation between $\varphi$ and non-singularity.
	\begin{lemma}
		Let $G_\varphi$ be a two-step $\mathcal{I}$-nilpotent group. The real Mal'cev completion $G_\varphi^\mathbb{R}$ is non-singular if and only if the map $\varphi^\R(w, \ast): \R^m \to \R^n: w' \mapsto \varphi^\R(w,w')$ has full rank for all $0\neq w\in\R^m$. 
	\end{lemma}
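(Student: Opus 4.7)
The plan is to translate non-singularity of the Lie algebra $\mathfrak{g}$ of $G_\varphi^\R$ directly into a statement about $\varphi^\R$, using Lemma \ref{lem_Lie_alg_twostep} to realize $\mathfrak{g}$ as $\R^m \times \R^n$ with bracket $[(w,v),(w',v')] = (0, \varphi^\R(w,w'))$. Since this bracket always lies in the second factor, $\Im \ad(w,v) = \{0\} \times \Im \varphi^\R(w,\ast)$ is contained in $\{0\} \times \R^n$ for every element $X=(w,v)$.

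Next, I would compute the center explicitly: $(w,v)$ is central if and only if $\varphi^\R(w,w') = 0$ for all $w'$, so $\mathfrak{z}(\mathfrak{g}) = R \times \R^n$, where $R := \{w \in \R^m \mid \varphi^\R(w,\ast) \equiv 0\}$ is the radical of $\varphi^\R$. Since every image of $\ad$ lies inside $\{0\} \times \R^n$, non-singularity of $\mathfrak{g}$ forces $\mathfrak{z}(\mathfrak{g}) \subseteq \{0\} \times \R^n$, and hence $R = \{0\}$. Conversely, if $\varphi^\R(w,\ast)$ has full rank (i.e.\ rank $n$) for every $0 \neq w \in \R^m$, then in particular $R = \{0\}$ and $\mathfrak{z}(\mathfrak{g}) = \{0\} \times \R^n$.

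With $R = \{0\}$ established on both sides of the equivalence, ``non-central'' simply means $w \neq 0$, and the surjectivity condition $\Im \ad(w,v) = \mathfrak{z}(\mathfrak{g})$ becomes $\Im \varphi^\R(w,\ast) = \R^n$, which is precisely the full-rank condition. Both implications thus reduce to the same one-line identification. The only genuine subtlety, rather than a serious obstacle, is noticing that the image of every $\ad(X)$ is automatically contained in $\{0\} \times \R^n$, so non-singularity a priori forces the radical $R$ to be trivial; once this observation is in place, both directions follow immediately from the explicit formula for the bracket.
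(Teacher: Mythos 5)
Your proposal is correct and follows essentially the same argument as the paper's proof: both identify the Lie algebra via Lemma \ref{lem_Lie_alg_twostep}, observe that $\Im \ad(w,v)$ always sits inside $\{0\}\times\R^n$, deduce that non-singularity forces the center to be exactly $\{0\}\times\R^n$, and then identify surjectivity of $\ad(w,v)$ with full rank of $\varphi^\R(w,\ast)$. Your explicit introduction of the radical $R$ makes the reduction to ``non-central $\Leftrightarrow$ $w\neq 0$'' slightly more transparent than the paper's phrasing, but the content is the same.
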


	\begin{proof}
		By definition and by Lemma \ref{lem_Lie_alg_twostep}, the non-singularity of $G_\varphi^\mathbb{R}$ means that $\mathfrak{g}^\mathbb{R}$ is non-singular. Hence, we need to verify that non-singularity of $\mathfrak{g}^\mathbb{R}$ is equivalent with $\varphi^\R(w, \ast)$ having full rank for all $0\neq w\in\R^m$. Note that the center of $\mathfrak{g}^\R$ is always at least $(0, \R^n)$.
		
		Suppose first that $\mathfrak{g}^\R$ is non-singular. Let $(w,v) \in \mathfrak{g}^\R$ be non-central, for $(w',v') \in \mathfrak{g}^\R$ we find by definition that
		\begin{equation} \label{eq_singular}
		(w',v') \mapsto [(w,v),(w',v')] = (0, \varphi^\R(w,w'))
		\end{equation}
		is a surjection onto the center. Hence, the center must equal $(0, \R^n)$ and $w' \mapsto \varphi(w,w')$ has to be surjective, i.e. it has full rank, if $w \neq 0$.
		
		For the converse, if $w'\mapsto \varphi(w,w')$ is surjective for all $w\neq 0$, then an element of the form $(w,v)$ with $w\neq 0$ cannot be central. Hence, the center is $(0, \mathbb{R}^n)$, and for a non-central $(w,v)$, Equation \eqref{eq_singular} gives a surjection onto the center. We conclude that $\mathfrak{g}^\R$ is non-singular.
	\end{proof}

	This result gives rise to the following definition of non-singularity over fields different from $\R$.
	\begin{df}
		We say that $G_\varphi$ is non-singular over the field $\mathbb{F}$ if $\varphi^\mathbb{F}(w, \ast): \mathbb{F}^m \to \mathbb{F}^n: w' \mapsto \varphi(w,w')$ has full rank for all $w\in\mathbb{F}^m$. 
	\end{df}
	\begin{rem}
		By definition, $G_\varphi$ is non-singular over $\R$ if and only if $(G_\varphi)^\R$ is non-singular. Note that the non-singularity of the Lie group $(G_\varphi)^\R$ implies non-singularity of $G_\varphi$ over $\mathbb{Q}$.
	\end{rem}
	We will show in the remark following Proposition \ref{prop_result_Galois_class} that a two-step $\mathcal{I}$-group can be non-singular over $\F$ but can fail to be so over a field extension of $\F$.
	
	We end this subsection by calculating the residual finiteness growth for two-step $\mathcal{I}$-groups which are non-singular over $\Q$.
	\begin{lemma} \label{lem_bound_nonsing}
		Let $G_\varphi$ be two-step nilpotent. If $G_\varphi$ is non-singular over $\Q$, then $\psi(G)$ equals $m+1$.
	\end{lemma}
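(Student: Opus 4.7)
The plan is to compute $\psi(0,v)$ for every non-zero $v\in \Z^n$, using that non-singularity of $G_\varphi$ over $\Q$ forces $Z(G_\varphi)=\{0\}\times\Z^n$. Indeed $(w,v)$ is central iff $\varphi(w,\cdot)\equiv 0$, which forces $w=0$. Since $\varphi$ is full, every non-zero $(0,v)$ has some power in $\varphi(\Z^m,\Z^m)=[G_\varphi,G_\varphi]$ and so qualifies as a commutator power; hence $\psi(G_\varphi)=\max_{v\neq 0}\psi(0,v)$.

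For the upper bound $\psi(0,v)\leq m+1$, choose a primitive $a\in\Z^n$ with $a^Tv\neq 0$, let $D=\{v'\in\Z^n\mid a^Tv'=0\}$, and take $N=N_{0,D}=\{0\}\times D$, which is normal by Lemma~\ref{lem_existence_NBD}. The quotient fits in a central extension $1\to\Z\to G_\varphi/N\to\Z^m\to 1$, so it is torsion-free of Hirsch length $m+1$. A coset $\overline{(w,v')}$ is central iff $\varphi(w,\Z^m)\subset D$, i.e.\ $(\sum a_iA_i)w=0$, which by non-singularity over $\Q$ forces $w=0$. Hence $Z(G_\varphi/N)\cong\Z$, and $\pi_N(0,v)$ is sent to $a^Tv\neq 0$, so it generates a finite-index subgroup. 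A nilpotent group with rank-$1$ center admits no non-trivial direct-sum decomposition (each factor would contribute a non-trivial central subgroup), so the quotient is irreducible and witnesses $\psi(0,v)\leq m+1$.

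For the lower bound, let $N$ realize $\psi(0,v)$; write $B=\tau(N)$, $D=N\cap\Z^n$, and $C=\{w\in\Z^m\mid\varphi(w,\Z^m)\subset D\}$. Arguing as in Lemma~\ref{lem_Dov_pre} shows $B\subset C$. Torsion-freeness of $G_\varphi/N$ together with $\pi_N(0,v)\neq 0$ forces $\rank(D)<n$, so the annihilator of $D^\Q$ in $(\Q^n)^*$ is non-zero; for any non-zero $a$ there, non-singularity gives $\ker(\sum a_iA_i)=0$, whence $C^\Q=0$ and therefore $C=0$ and $B=0$. A Hirsch-length computation then yields $\rank Z(G_\varphi/N)=\rank(C)-\rank(B)+n-\rank(D)=n-\rank(D)$, which the hypothesis $\langle\pi_N(0,v)\rangle \leq_f Z(G_\varphi/N)$ forces to equal $1$, so $\rank(D)=n-1$ and $h(G_\varphi/N)=(m+n)-(n-1)=m+1$. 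The main delicacy is that $Z(G_\varphi/N)$ may a priori strictly contain $\pi_N(Z(G_\varphi))$, and pinning it down requires precisely the invertibility of every non-zero $\sum a_iA_i$ supplied by non-singularity over $\Q$.
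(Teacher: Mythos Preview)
Your argument is correct and, for the lower bound, follows the same idea as the paper: non-singularity over $\Q$ forces any admissible $N$ to sit inside $\{0\}\times\Z^n$, after which the rank-$1$ center condition pins down $h(G_\varphi/N)=m+1$. The paper carries this out more tersely, arguing directly that an element $(w,v_2)\in N$ with $w\neq 0$ would force $(0,\Z^n)\subset N$, whereas you route the same conclusion through the auxiliary set $C=\{w\mid\varphi(w,\Z^m)\subset D\}$ and the invertibility of $\sum a_iA_i$; the content is the same.

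Where you go beyond the paper is in the upper bound: you explicitly construct a witness $N=\{0\}\times\ker(a^T)$ and verify torsion-freeness, irreducibility, and the finite-index center condition. The paper's proof as written only establishes $h(G/N)\ge m+1$ for an $N$ realizing the minimum and does not exhibit such an $N$, so your version is the more complete of the two.
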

	\begin{proof}
		The elements $(w,v)$ that lie in the center and for which there exists $a\in G_{c-1}$ and $b\in G$ such that $(w,v)^k = [a,b]$ for some $k\in\N$ are elements of the form $(0,v)$ with $v\in\Z^n$.
		
		Take any element $g=(0,v)$. We show that 
		$$\min\{h(G/N)\mid G/N \text{ irr, t.f.}, e\neq \langle\pi_N(g)\rangle \leq_f Z(G/N)\} $$
		equals $m+1$. This then ends the proof.	Take $N$, realizing the minimum above. If $(w, v_2)$ were an element of $N$ with $w\neq 0$, then 
		$[(w,v_2), (w^\prime, 0)] = (0, \varphi(w,w^\prime)) \in N$ for all $w^\prime \in \Z^m$. Since $\varphi^\Q(w, \ast)$ has full rank, the image of $w^\prime \mapsto \varphi(w,w^\prime)$ is a finite index subgroup of $\Z^n$. Finally, note that $G/N$ has to be torsion-free, so $(0, \Z^n) \subset N$. This contradicts the assumption that $e\neq \langle \pi_N(g)\rangle$. Hence, $N\subset (0, \Z^n)$. Since the quotient has one-dimensional center $Z(G/N)$, this dictates that $h(G/N) \geq  m+1$.
	\end{proof}

	\subsection{Construction of Proposition \ref{prop_psiG_dG}}
	We now proceed to show that the class of non-singular groups (over $\Q$) contains groups where the actual residual finiteness growth is $\log^3$ (in agreement with our new bound). The upper bound $\log^{m+1}$ previously established is therefore not optimal. The techniques closely follow the methodology from \cite{dd14-1}.
	\begin{df}
		Let $k_1, k_2, k_3, k_4 \in \N$. The Kronecker product $A\otimes B$ of two matrices $A\in \C^{k_1\times k_2}$ and $B\in \C^{k_3\times k_4}$ is given by
		$$A \otimes B = \begin{pmatrix}a_{11} B & a_{12} B & \dots \\ a_{21} B & a_{22} B & \dots \\ \vdots & \vdots & \ddots \end{pmatrix} \in \C^{k_1k_3\times k_2k_4}.$$
	\end{df}
	Let $A_i \in \Z^{n\times n}$ be the diagonal matrix with zero everywhere except $1$ on the $(i,i)$-position. The bilinear map
	$$\rho: \Z^{2n}\times \Z^{2n} \to \Z^n: (w,w') \mapsto \begin{pmatrix} w^T\left(A_1\otimes \begin{psmallmatrix}0&1\\-1&0\end{psmallmatrix}\right)w' \\ \vdots \\ w^T\left(A_n\otimes \begin{psmallmatrix}0&1\\-1&0\end{psmallmatrix}\right)w'\end{pmatrix} $$
	yields the direct product $G_\rho = H_3(\Z) \oplus \ldots \oplus H_3(\Z)$ of $n$ discrete Heisenberg groups. In particular, its residual finiteness growth is $\log^3$. In what follows, we will perform base changes on $\F^{2n}$ and $\F^n$ to obtain a new bilinear map $\varphi_\F : \Z^{2n} \times \Z^{2n} \to \Z^n$ such that $G_{\varphi_\F}$ is non-singular over $\Q$ (and thus $\psi(G_{\varphi_\F})= 2n+1$). However,
	$$G_{\varphi_\F}^\F \cong G_\rho^\F,$$
	so its residual finiteness growth remains $\log^3$.
	
	We need the following notation to define the base change:
	\begin{df}
		Let $\mathbb{F}$ be a number field, Galois over $\Q$. Let $[\mathbb{F}:\Q] = n$ and $\text{Gal}(\mathbb{F}/\Q) = \{\sigma_1, \dots , \sigma_n\}$. Define $K_i \in \GL(n, \Z)$ to be the permutation matrix corresponding to the action of $\sigma_i$ on $\{\sigma_1, \dots , \sigma_n\}$, i.e.
		$$(K_i)_{k,l} = 1 \Leftrightarrow \sigma_i\circ \sigma_k = \sigma_l$$ and $0$ otherwise.
	\end{df}
	Note that $\mathbb{F} = \Q(\alpha)$ for some algebraic integer $\alpha \in \mathcal{O}_\mathbb{F}$, as number fields are primitive. This way we can define a matrix $E \in \GL(n, \mathbb{F})$ via
	$$(E)_{k,l} =  \sigma_k(\alpha^l).$$
	By construction, we have $\sigma_i(E) = K_iE$.
	\begin{lemma}
		Let $A_i$ be the diagonal matrix with zero everywhere except $1$ on the $(i,i)$-position. The bilinear map
		$$\varphi_\mathbb{F}: \Z^{2n}\times \Z^{2n} \to \Z^n: (w,w') \mapsto E^T\begin{pmatrix} w^TM_1w' \\ \vdots \\ w^TM_nw'\end{pmatrix} $$
		with $M_i = (E\otimes \mathbb{1}_2)^T(A_i\otimes \begin{psmallmatrix}0&1\\-1&0\end{psmallmatrix})(E\otimes \mathbb{1}_2)$ is well-defined.
	\end{lemma}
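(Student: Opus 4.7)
The plan is to verify that $\varphi_\mathbb{F}(w,w')$ lands in $\Z^n$, since bilinearity is immediate from the definition (the expression is bilinear in $w,w'$, and $E$ and the $M_i$ are constant in $w,w'$). Integrality is the only non-trivial content, and I will obtain it by rewriting the defining expression as a Galois trace of an algebraic integer.

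First, I would perform the change of variables $\tilde w = (E\otimes \mathbb{1}_2)w$ and $\tilde w' = (E\otimes \mathbb{1}_2)w'$, which are vectors in $\mathbb{F}^{2n}$. Since $A_i \otimes \begin{psmallmatrix}0&1\\-1&0\end{psmallmatrix}$ has all of its non-zero entries in the $i$-th diagonal $2\times 2$-block, writing $\tilde w$ as a sequence of two-dimensional blocks $\tilde w_1, \ldots, \tilde w_n$ in $\mathbb{F}^2$ gives $w^TM_iw' = \tilde w_i^T\begin{psmallmatrix}0&1\\-1&0\end{psmallmatrix}\tilde w_i'$.

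Next, I would observe that the $i$-th block of $\tilde w$ equals $(\sigma_i(x),\sigma_i(y))^T$, where $x = \sum_{j} \alpha^j w_{2j-1}$ and $y = \sum_{j} \alpha^j w_{2j}$ lie in $\mathcal{O}_\mathbb{F}$ (this uses $(E)_{k,l}=\sigma_k(\alpha^l)$ and $w \in \Z^{2n}$). Defining $x'$ and $y'$ analogously from $w'$, a direct computation yields
\[ w^TM_iw' \;=\; \sigma_i(x)\sigma_i(y') - \sigma_i(y)\sigma_i(x') \;=\; \sigma_i(xy' - yx'). \]
Set $\beta = xy' - yx' \in \mathcal{O}_\mathbb{F}$; then the vector $(w^TM_1w',\ldots,w^TM_nw')^T$ is precisely $(\sigma_1(\beta),\ldots,\sigma_n(\beta))^T$.

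The final step is to compute the $k$-th component of $\varphi_\mathbb{F}(w,w') = E^T(\sigma_i(\beta))_i$, which unravels to $\sum_i \sigma_i(\alpha^k)\sigma_i(\beta) = \sum_i \sigma_i(\alpha^k\beta) = \Tr_{\mathbb{F}/\Q}(\alpha^k\beta)$. Since $\alpha^k\beta \in \mathcal{O}_\mathbb{F}$, this trace lies in $\Z$, so $\varphi_\mathbb{F}(w,w') \in \Z^n$ as required. The main technical point is spotting the correct substitution in the first step so that the Kronecker structure collapses to a simple $2\times 2$ determinant-like expression $xy'-yx'$; once this is in hand, the Galois orbit interpretation and the trace identity follow mechanically.
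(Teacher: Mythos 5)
Your proof is correct and takes a genuinely different route from the paper's. The paper works at the matrix level: it writes $\varphi_\mathbb{F}(w,w') = (w^TB_1w',\ldots,w^TB_nw')^T$ with $B_j = \sum_i \sigma_i(\alpha^j)(E\otimes\mathbb{1}_2)^T(A_i\otimes\begin{psmallmatrix}0&1\\-1&0\end{psmallmatrix})(E\otimes\mathbb{1}_2)$, then shows each $B_j$ is fixed by every $\sigma_k\in\Gal(\mathbb{F}/\Q)$ by tracking how $\sigma_k(E) = K_kE$ permutes the summands (the key being that conjugation by $K_k^{\otimes 2}$ sends $A_i\otimes J$ to $A_l\otimes J$ when $\sigma_k\sigma_i=\sigma_l$). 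Galois invariance plus integrality of entries yields $B_j\in\Z^{2n\times 2n}$. You instead work at the scalar level: by substituting $\tilde w = (E\otimes\mathbb{1}_2)w$ you collapse the $i$-th block pairing to $\sigma_i(xy'-yx')$ with $x=\sum_j\alpha^jw_{2j-1}$ etc., so the output vector is the Galois orbit of the single algebraic integer $\beta = xy'-yx'$, and the $k$-th component of $\varphi_\mathbb{F}(w,w')$ is recognized as $\Tr_{\mathbb{F}/\Q}(\alpha^k\beta)\in\Z$. Your approach is arguably more conceptual, since it makes visible that the construction is a trace form on an $\mathcal{O}_\mathbb{F}$-module; the paper's matrix-conjugation argument is closer to the bookkeeping one needs later in Proposition~\ref{prop_result_Galois_class}, where the matrices $B_j$ and the factorization through $E^{\otimes 2}$ are reused to compute determinants. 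Both prove the same integrality statement correctly.
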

	\begin{proof}
		Denote $D\otimes \mathbb{1}_2$ by $D^{\otimes 2}$. The map $\varphi_\mathbb{F}$ is defined by $\varphi_{\mathbb{F}}(w,w') = (w^TB_1w', \dots , w^TB_nw')^T$ with 
		$$B_j = \sum_{i=1}^n \sigma_i(\alpha^j) (E^{\otimes 2})^T(A_i\otimes \begin{psmallmatrix}0&1\\-1&0\end{psmallmatrix})E^{\otimes 2}.$$
		We need to verify that all $B_j$ are integral matrices. Take $\sigma_k \in \text{Gal}(\mathbb{F}/\Q)$, then we find that
		$$\sigma_k(B_j) = \sum_{i=1}^n (\sigma_k\circ\sigma_i)(\alpha^j) \sigma_k(E^{\otimes 2})^T(A_i\otimes \begin{psmallmatrix}0&1\\-1&0\end{psmallmatrix})\sigma_k(E^{\otimes 2}).$$
		Since $\sigma_k(E) = K_kE$, we have $\sigma_k(E^{\otimes 2}) = K_k^{\otimes 2}E^{\otimes 2}$. We obtain
		$$\sigma_k(B_j) = \sum_{i=1}^n (\sigma_k\circ\sigma_i)(\alpha^j) (E^{\otimes 2})^T(K_k^{\otimes 2})^T(A_i\otimes \begin{psmallmatrix}0&1\\-1&0\end{psmallmatrix})K_k^{\otimes 2}E^{\otimes 2}.$$
		If $\sigma_k\circ\sigma_i = \sigma_l$, then we see that $(K_k^{\otimes 2})^T(A_i\otimes \begin{psmallmatrix}0&1\\-1&0\end{psmallmatrix})K_k^{\otimes 2}$ equals $A_l\otimes \begin{psmallmatrix}0&1\\-1&0\end{psmallmatrix}$. Hence, we have
		$$\sigma_k(B_j) = \sum_{l=1}^n \sigma_l(\alpha^j) (E^{\otimes 2})^T(A_l\otimes \begin{psmallmatrix}0&1\\-1&0\end{psmallmatrix})E^{\otimes 2} = B_j.$$
		Since $\sigma_k$ was arbitrary, and $\mathbb{F}$ is Galois over $\Q$, we conclude that $B_j$ is a rational matrix. Furthermore, as it is built from algebraic integers only, it has integral entries.
	\end{proof}
	\begin{prop} \label{prop_result_Galois_class}
		Let $G$ denote the two-step nilpotent group with bilinear map $\varphi_\mathbb{F}$. Then 
		$\RF_{G} = \log^3$ and $\psi(G) = 2n+1$.
	\end{prop}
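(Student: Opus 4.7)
The statement has two parts: $\RF_G = \log^3$ and $\psi(G) = 2n+1$. I would treat them separately.

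For the upper bound of the first claim, the plan is to observe that the construction of $\varphi_\F$ is literally a base change of $\rho$. Unpacking the definition, one computes
$$\varphi_\F(w,w') = E^T \rho\bigl(E^{\otimes 2} w,\, E^{\otimes 2} w'\bigr),$$
so $\varphi_\F^\C$ and $\rho^\C$ are related exactly by Equation \eqref{eq_equiv_C_compl} with $P = E^T \in \GL(n,\F)$ and $Q = E^{\otimes 2} \in \GL(2n,\F)$, both invertible over $\C$. The remark following Theorem \ref{thm_upper_bound} then gives $d(\varphi_\F^\C) = d(\rho^\C)$. Computing $d(\rho^\C)$ is easy: taking the standard basis $a^{(j)} = e_j$ of $\C^n$, one has $\sum_i a^{(j)}_i N_i = N_j = A_j \otimes \bigl(\begin{smallmatrix}0&1\\-1&0\end{smallmatrix}\bigr)$, whose rank is $\rank(A_j)\cdot 2 = 2$, so $d(\rho^\C) \leq 2$, and equality holds since skew-symmetric ranks are even and $\rho$ is nonzero. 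Hence $d(\varphi_\F^\C) = 2$ and Theorem \ref{thm_upper_bound} yields $\RF_G \preceq \log^3$. The matching lower bound $\RF_G \succeq \log^3$ will be supplied by the general lower bound result of Section \ref{sec_lower}, which applies since the commutator subgroup of $G$ is nontrivial.

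For $\psi(G) = 2n+1$, by Lemma \ref{lem_bound_nonsing} it suffices to prove that $G_{\varphi_\F}$ is non-singular over $\Q$, i.e.\ that for every nonzero $w\in \Q^{2n}$, the linear map $\varphi_\F^\Q(w, \,\cdot\,)\colon \Q^{2n} \to \Q^n$ is surjective. Since the $B_k$ have rational entries, the rank does not change after extending scalars to $\F$, and via the identity above this reduces to showing that $\rho(\tilde w, \,\cdot\,)\colon \F^{2n} \to \F^n$ is surjective, where $\tilde w = E^{\otimes 2} w$. Since $\rho(\tilde w,\tilde w')_i = \tilde w^{(i)T} J \tilde w'^{(i)}$ with $\tilde w = (\tilde w^{(1)},\dots,\tilde w^{(n)})$, this amounts to the condition $\tilde w^{(i)} \neq 0 \in \F^2$ for each $i = 1,\dots,n$.

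Writing $w^{(j)} = (a_j,b_j)^T$ with $a_j, b_j \in \Q$, one obtains
$$\tilde w^{(i)} = \Bigl(\sum_{j=1}^n \sigma_i(\alpha)^j a_j,\, \sum_{j=1}^n \sigma_i(\alpha)^j b_j\Bigr)^T.$$
Assuming $w \neq 0$, without loss of generality some $a_k \neq 0$. Since $\F = \Q(\alpha)$ and $\sigma_i$ is a Galois automorphism, $\sigma_i(\alpha)$ is a primitive element of $\F/\Q$, so $1, \sigma_i(\alpha), \dots, \sigma_i(\alpha)^{n-1}$ are linearly independent over $\Q$. Moreover $\sigma_i(\alpha) \neq 0$ (as $\alpha \neq 0$), so multiplying by $\sigma_i(\alpha)$ preserves linear independence and $\sigma_i(\alpha), \dots, \sigma_i(\alpha)^n$ are linearly independent over $\Q$ as well. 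Hence $\sum_j \sigma_i(\alpha)^j a_j \neq 0$, giving $\tilde w^{(i)} \neq 0$ for every $i$. Thus $\varphi_\F$ is non-singular over $\Q$, and Lemma \ref{lem_bound_nonsing} concludes $\psi(G) = 2n+1$.

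The main obstacle is the lower bound $\RF_G \succeq \log^3$, since the upper bound and the non-singularity computation are largely mechanical once the right change-of-basis identity is written down. I would rely on the lower-bound framework of Section \ref{sec_lower}; the only point to check is that the hypotheses of that framework apply to the present class (which is straightforward for non-abelian two-step nilpotent groups and is in any case subsumed by the fact that, after extending to $\C$, $G$ has the same Mal'cev completion as a direct product of Heisenberg groups, for which $\log^3$ is known to be optimal).
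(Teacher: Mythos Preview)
Your upper-bound argument is the paper's: both invoke the base-change identity $\varphi_\F(w,w') = E^T\rho(E^{\otimes 2}w,E^{\otimes 2}w')$ and the remark after Theorem~\ref{thm_upper_bound} to conclude $d(\varphi_\F^\C)=d(\rho^\C)=2$.

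The lower bound, however, is not justified. Theorem~\ref{thm_lower_bound} does not apply merely because the commutator is nontrivial; you must exhibit $v\in\Z^n$ with $(v^Tx)^2\in I_2(M_x)$. For these particular groups this can be done (after the base change one computes that $I_2$ is the full ideal $H_2$ of quadratic forms, and then the lemma preceding Example~\ref{ex_Heis} in Section~\ref{sec_pr_lower} descends to $\Z$), but you have not carried this out. Your alternative justification---that $G$ and $\bigoplus H_3(\Z)$ share a complex Mal'cev completion---is circular: the paper does \emph{not} prove that $\RF$ depends only on $G^\C$; that is precisely Conjecture~\ref{conj_minors}. The paper's argument for the lower bound is a one-liner you are missing: $H_3(\Z)$ embeds in every non-abelian two-step $\mathcal{I}$-group, and $\RF$ is monotone under passing to finitely generated subgroups, so $\log^3=\RF_{H_3(\Z)}\preceq\RF_G$.

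For non-singularity over $\Q$ your argument is correct but dual to the paper's. You fix $0\neq w\in\Q^{2n}$ and show each $2$-block of $E^{\otimes 2}w$ is nonzero via the $\Q$-linear independence of $\sigma_i(\alpha),\dots,\sigma_i(\alpha)^n$. The paper instead fixes a covector $0\neq a\in\Q^n$ and shows $\sum_i a_iB_i$ is invertible by computing its determinant as $\det(E)^4\prod_j\sigma_j\!\left(\sum_i a_i\alpha^i\right)^2$, nonzero by the same independence fact. Your route is marginally more direct; the paper's yields the stronger intermediate statement that every nonzero rational combination $\sum a_iB_i$ is invertible (used in the remark following the proposition to discuss non-singularity over larger fields).
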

	\begin{proof}
		The bilinear map $\varphi_\mathbb{F}$ is defined by the matrices $B_1$ up to $B_n$. By the remark following Theorem \ref{thm_upper_bound}, we know that $d(\varphi^\C)$ equals $d(\psi)$ with $\psi: \C^m\times \C^m \to \C^n$ defined via the matrices $A_i\otimes \begin{psmallmatrix}0&1\\-1&0\end{psmallmatrix}$. The value $d(\psi)$ is easily seen to be $2$, using the standard projections of $\C^n$. Hence, by Theorem \ref{thm_upper_bound}, we have $\RF_G \preceq \log^3$. Since the Heisenberg group embeds in every two-step $\mathcal{I}$-group and its residual finiteness growth equals $\log^3$, we know that in fact $\RF_G = \log^3$.
		
		For the second statement, it suffices to show that $\varphi_\mathbb{F}(w, \ast)$ has full rank for all $0\neq w\in\Q^{2n}$. The conclusion then follows by Lemma \ref{lem_bound_nonsing}.
		
		Assume by contradiction that there is a vector $0\neq w\in \Q^{2n}$ such that $\varphi_\mathbb{F}(w, \ast)$ does not have full rank. In particular, this means there is some projection $\pi:\Q^n \to \Q$ defined by $(a_1, \dots , a_n)$ such that $\pi\circ\varphi_\mathbb{F}(w,w') = 0$ for all $w'\in \Q^{2n}$. This expression equals
		$$0 = \begin{pmatrix}a_1 & \ldots & a_n\end{pmatrix}\begin{pmatrix}w^TB_1w'\\ \vdots \\ w^TB_nw'\end{pmatrix} = w^T\left(\sum_{i=1}^na_iB_i\right)w'.$$
		We claim that $ \sum_{i=1}^na_iB_i \in \Q^{2n\times 2n}$ has full rank for any non-zero choice of $(a_1 , \dots , a_n) \in \Q^n$. If so, the statement above is not possible for all $w'\in \Q^{2n}$, yielding a contradiction to the assumption, which ends the proof.
		
		We will check that the matrix $\sum_{i=1}^na_iB_i$ has a non-zero determinant. Recall we have
		\begin{equation*}
			\begin{split}
				\sum_{i=1}^na_iB_i &=\sum_{i=1}^na_i\sum_{j=1}^n \sigma_j(\alpha^i) (E^{\otimes 2})^T(A_j\otimes \begin{psmallmatrix}0&1\\-1&0\end{psmallmatrix})E^{\otimes 2} \\
				& =(E^{\otimes 2})^T\left( \sum_{j=1}^n\sum_{i=1}^n a_i\sigma_j(\alpha^i) (A_j\otimes \begin{psmallmatrix}0&1\\-1&0\end{psmallmatrix})\right)E^{\otimes 2}.  
			\end{split}
		\end{equation*}
		Hence, as $\det(E) \neq 0$, the determinant is non-zero if and only if
		$$0 \neq \det\left(\sum_{j=1}^n\sum_{i=1}^n a_i\sigma_j(\alpha^i) (A_j\otimes \begin{psmallmatrix}0&1\\-1&0\end{psmallmatrix})\right).$$
		This matrix is block-diagonal, whose determinant is easily seen to be
		$$\prod_{j=1}^n\left(\sum_{i=1}^n a_i\sigma_j(\alpha^i)\right)^2.$$
		Using that $a_i \in \Q$, the factor $\sum_{i=1}^n a_i\sigma_j(\alpha^i)$ is zero if
		$$0 = \sigma_j(\sum_{i=1}^n a_i\alpha^i) = \sum_{i=1}^n a_i\alpha^i.$$
		Since the $\alpha^i$'s form a $\Q$-basis of $\mathbb{F}$, this expression is never zero, except when all $a_i$ are zero.
	\end{proof}
	\begin{rem}
		From the proof above, we know that all groups of the form $G_{\varphi_\F}$ are non-singular over $\Q$. However, they are not over $\F$ itself, as for suitable $(a_1 , \ldots , a_n) \in \F^n$, the expression $\sum_{i=1}^n a_i\sigma_j(\alpha^i)$ appearing in the determinant can be zero. Non-singularity is therefore not preserved by field extensions. Also, not all of these groups give rise to a non-singular Lie group $(G_{\varphi_\F})^\R$. For example, if we take $\F = \Q(i)$, then $(G_{\varphi_\F})^\R$ is non-singular, in contrast to the case $\F = \Q(\sqrt{2})$.
	\end{rem}
	Note that these examples also illustrate that the bound $\RF_{G_\varphi} \preceq \log^{d'}$ with
	$$d' =  \min\{\max_{j=1}^n\{\rank_\mathbb{Q}\sum_{i=1}^n a^{(j)}_iA_i\}\mid a^{(1)} \text{ to }a^{(n)}\text{ is a basis of }\mathbb{Q}^n\},$$
	i.e. the value $d(\varphi^\C)$ over the field $\Q$ instead of over $\C$, does not give a sharp upper bound.
	
	\section{Lower Bound} \label{sec_lower}
	In Theorem \ref{thm_upper_bound}, we have shown an upper bound for $\RF_{G_\varphi}$ of the form $\log^{d(\varphi^\C)+1}$. In this section, we will prove a lower bound of the form $\log^{\delta+1}$. By giving an alternative characterization of $d(\varphi^\C)$, we are able to establish a clear connection between $d(\varphi^\C)$ and $\delta$, given in Proposition \ref{prop_connection_upper_lower}. In fact, we believe both numbers always agree. Therefore, we conjecture that the bound of Theorem \ref{thm_upper_bound} is exact.
	\subsection{Proof of the Lower Bound} \label{sec_pr_lower}
	For the remainder of this article, we will fix the following notation:
	\begin{nota}
		Given matrices $A_1, A_2, \ldots , A_n \in \C^{m\times m}$. We denote the matrix $\sum_{i=1}^n x_iA_i$ in $\C[x_1, \dots, x_n]$ by $M_x$. If we evaluate the $x_i$ in concrete values $a_i \in \C$, we will write $M_a$ for $\sum_{i=1}^n a_iA_i \in \C^{n \times n}$.
	\end{nota}
	\begin{nota}
		For us, a $d\times d$ minor of a matrix $M$ is the determinant of a $d\times d$ submatrix of $M$. We will use the notation $I_d(M)$ to denote the ideal generated by all $d\times d$ minors of $M$. In case of $I_d(M_x)$, the corresponding variety (of common zeroes of the polynomials) will be denoted by $V(I_d(M_x))$.
	\end{nota}
	\begin{nota}
		The notation $I_d(M) = (q_1, \ldots , q_s)$ will be used, when we want to implicitly say that $q_1$ up to $q_s$ (for some $s\in \N$) are precisely the $d\times d$ minors of $M$.
	\end{nota}
	Throughout this subsection, we also fix the group $G_\varphi$, where the map $\varphi: \Z^m\times \Z^m \to \Z^n$ is defined by matrices $A_1, A_2, \ldots , A_n \in \Z^{m\times m}$. We wish to prove the following result: 
	\begin{thm}\label{thm_lower_bound}
		If there exist a non-trivial $v = (v_1, \ldots, v_n) \in \Z^n$ such that
		$$\left(\sum_{i=1}^n v_ix_i\right)^d \in I_d(M_x) \subset \Z[x_1, \ldots , x_n],$$
		then $\log^{d+1} \preceq \RF_{G_\varphi}$.
	\end{thm}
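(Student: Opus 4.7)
The strategy is to exhibit, for every large $r$, a central element $(0, v') \in G_\varphi$ of word norm at most $r$ with $D_{G_\varphi}(0, v') \gtrsim (\log r)^{d+1}$; combined with Lemma~\ref{lem_Dov}, this yields $\log^{d+1} \preceq \RF_{G_\varphi}$. Set $v' := N_K v$ with $N_K := \lcm(1, 2, \ldots, K)$ and $K \asymp \log r$. Since $\RF$ is well-defined up to the equivalence $\approx$, we may work with the generating set $\{(e_i, 0), (0, e_j)\}$ of $G_\varphi$. Then $(0, v') = \prod_j (0, e_j)^{v'_j}$ has word norm at most $n \|v'\|_\infty$, and Chebyshev's bound $N_K \leq e^{CK}$ gives $\|v'\|_\infty \leq \|v\|_\infty \cdot e^{CK}$. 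For a suitable $K \asymp \log r$, the word norm is at most $r$.

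We now bound $D_{G_\varphi}(0, v')$ from below via Lemma~\ref{lem_Dov}. Fix an admissible triple $(p, k, a)$, i.e.\ with $\gcd(a) = 1$ and $a^T v' \not\equiv 0 \pmod{p^k}$, and set $t := v_p(N_K)$; so $t = \lfloor \log_p K \rfloor$ when $p \leq K$ and $t = 0$ when $p > K$. The admissibility condition $v_p(N_K \cdot a^T v) < k$ forces both $p^k > K$ and $v_p(a^T v) \leq k - t - 1$. The hypothesis provides an identity $L(x)^d = \sum_j c_j(x) q_j(x)$ in $\Z[x_1, \ldots, x_n]$, where $q_j$ ranges over the $d \times d$ minors of $M_x$. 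Evaluating at $x = a$ yields $(a^T v)^d = \sum_j c_j(a) q_j(a) \in \Z$, so taking $p$-adic valuations gives
\begin{equation*}
e \; := \; \min_j v_p\bigl(q_j(a)\bigr) \; \leq \; d \cdot v_p(a^T v) \; \leq \; d(k - t - 1).
\end{equation*}

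By the classical identity $\min_j v_p(q_j(a)) = \sum_{i=1}^d v_p(d_i)$, where $d_1 \mid d_2 \mid \cdots \mid d_r$ are the elementary divisors of $M_a$ over $\Z$, and the formula $|\Im_{\Z_{p^k}} M_a| = p^{kr - \sum_{i=1}^r \min(k, v_p(d_i))}$, we deduce $|\Im_{\Z_{p^k}} M_a| \geq p^{dk - e} \geq p^{d(t+1)}$ (using $e < dk$). Case-split on $p$: if $p > K$ then $t = 0$, and $|\Im_{\Z_{p^k}} M_a| \cdot p^k \geq p^{d+k} > K^{d+k} \geq K^{d+1}$; if $p \leq K$ then $p^{t+1} > K$ by definition of $t$, and $|\Im_{\Z_{p^k}} M_a| \cdot p^k > K^d \cdot K = K^{d+1}$. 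In both cases $D_{G_\varphi}(0, v') > K^{d+1} \asymp (\log r)^{d+1}$, giving the desired lower bound.

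The main technical obstacle is the Smith-normal-form step converting the minor-valuation bound $e \leq d(k - t - 1)$ into the image-size bound $|\Im_{\Z_{p^k}} M_a| \geq p^{d(t+1)}$; the identity $\min_j v_p(q_j(a)) = \sum_{i=1}^d v_p(d_i)$ must be combined with the general image-size formula, with careful accounting of elementary divisors whose $p$-valuation exceeds $k$. A second delicate point is calibrating $N_K$ so that its high divisibility at small primes precisely compensates the factor $d$ entering through the $d$-th power in the hypothesis, yielding the uniform bound $|\Im_{\Z_{p^k}} M_a| \cdot p^k > K^{d+1}$ across all admissible triples $(p, k, a)$.
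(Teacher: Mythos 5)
Your proof is correct and follows essentially the same route as the paper: take the central element $(0, N_K v)$ with $N_K = \lcm(1,\ldots,K)$, bound $D_{G_\varphi}(0,N_K v)$ below via Lemma~\ref{lem_Dov} together with the Smith-normal-form fact that $\prod_{i=1}^d d_i = \gcd$ of the $d\times d$ minors of $M_a$ divides $(a^Tv)^d$, and calibrate $K\asymp \log r$. The only differences are cosmetic (you phrase things via $t = v_p(N_K)$ and a case split on $p\lessgtr K$, whereas the paper writes $a^Tv = bp^f$ and uses $\kappa < p^{k-f}$ directly); note a small notational clash in your image-size formula, where the matrix size should be $m$, not $r$.
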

	Before we prove the above, we present some easy lemmas.
	\begin{lemma}
		If $\left(\sum_{i=1}^n v_ix_i\right)^d \in I_d(M_x) = (q_1, \ldots , q_s) \subset \Z[x_1, \ldots , x_n]$, then 
		$$\left(\sum_{i=1}^n v_ix_i\right)^d = \sum_{j=1}^s \lambda_j q_j$$
		with $\lambda_j \in \Z$.
	\end{lemma}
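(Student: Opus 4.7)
The plan is a straightforward homogeneity argument. First I would observe that $M_x = \sum_{i=1}^n x_i A_i$ has entries that are homogeneous linear forms in $x_1, \ldots, x_n$, so every $d\times d$ minor $q_j$ of $M_x$, being a signed sum of products of $d$ such entries, is a homogeneous polynomial of degree exactly $d$ in $\mathbb{Z}[x_1, \ldots, x_n]$. On the other side, $\bigl(\sum_{i=1}^n v_i x_i\bigr)^d$ is trivially homogeneous of degree $d$ as well.

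Next, I would unpack the ideal-membership hypothesis: by definition it yields polynomials $p_1, \ldots, p_s \in \mathbb{Z}[x_1, \ldots, x_n]$, not necessarily constants, such that $\bigl(\sum_{i=1}^n v_i x_i\bigr)^d = \sum_{j=1}^s p_j q_j$. Decompose each $p_j$ into its homogeneous components $p_j = \sum_{k\geq 0} p_j^{(k)}$, where $p_j^{(k)}$ is homogeneous of degree $k$ and $p_j^{(0)}$ is an integer (the constant term of $p_j$).

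Then I would compare homogeneous components of total degree $d$ on both sides of the equation $\bigl(\sum_{i=1}^n v_i x_i\bigr)^d = \sum_{j=1}^s \sum_{k \geq 0} p_j^{(k)} q_j$. Since $q_j$ is already of degree $d$, the summand $p_j^{(k)} q_j$ has degree $k + d$, so the degree-$d$ part of the right-hand side collects only the $k=0$ contributions, namely $\sum_{j=1}^s p_j^{(0)} q_j$. The left-hand side is itself homogeneous of degree $d$, so it equals its own degree-$d$ component. Setting $\lambda_j := p_j^{(0)} \in \mathbb{Z}$ gives the claimed integer linear combination.

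There is no real obstacle here; the only subtlety is ensuring that the coefficients one extracts remain in $\mathbb{Z}$ and not in some larger ring. This is automatic because the original $p_j$ lie in $\mathbb{Z}[x_1,\ldots,x_n]$, so each of their constant terms is an integer. The argument does not even use the specific structure of $v$ or of the $A_i$, only the linearity of $M_x$ in the variables and the fact that we work over a (commutative) ring that admits the obvious grading by total degree.
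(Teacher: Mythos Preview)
Your argument is correct and is exactly the approach the paper has in mind: the paper's proof consists of the single sentence ``The proof is immediate using the definition and the fact that $I_d(M_x)$ is generated by homogeneous polynomials of degree $d$,'' and your homogeneous-component extraction is precisely the unpacking of that sentence.
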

	The proof is immediate using the definition and the fact that $I_d(M_x)$ is generated by homogeneous polynomials of degree $d$.
	\begin{lemma}
		If there exist non-trivial rational numbers $v'_1$ to $v'_n$ in $\Q$ such that $\left(\sum_{i=1}^n v'_ix_i\right)^d$ lies in the ideal $I_d(M_x)\otimes_\Z \C$ over $\C[x_1 , \dots, x_n]$, then there exist non-trivial integers $v_i \in \Z$ such that
		$$\left(\sum_{i=1}^n v_ix_i\right)^d \in I_d(M_x) \subset \Z[x_1, \ldots , x_n].$$
	\end{lemma}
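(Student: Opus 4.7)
The plan is to argue in two main steps: first descend the coefficients from $\C$ to $\Q$ by a linear algebra argument, then clear denominators.

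First, I would exploit homogeneity. Since $M_x = \sum_{i=1}^n x_i A_i$ is linear in the variables $x_1, \ldots, x_n$, each $d \times d$ minor of $M_x$ is a homogeneous polynomial of degree $d$ in $\Z[x_1,\ldots,x_n]$. Hence the degree-$d$ part of $I_d(M_x) \otimes_\Z \C$ is exactly the $\C$-linear span of the generators $q_1, \ldots, q_s$. Because $\bigl(\sum_i v'_i x_i\bigr)^d$ is itself homogeneous of degree $d$, the assumption gives scalars $\mu_1, \ldots, \mu_s \in \C$ with $\bigl(\sum_i v'_i x_i\bigr)^d = \sum_j \mu_j q_j$.

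Next, I would express this as a linear system. Writing both sides in the monomial basis of the degree-$d$ part of $\Z[x_1,\ldots,x_n]$, we obtain a linear equation $A \mu = b$, where $A$ is the integer matrix whose columns are the coordinate vectors of the $q_j$ and $b \in \Q^N$ is the coordinate vector of $\bigl(\sum_i v'_i x_i\bigr)^d$ (which is rational since the $v'_i$ are rational). By assumption this system has a complex solution; because both $A$ and $b$ have rational entries, basic linear algebra over field extensions guarantees a rational solution $\mu' \in \Q^s$. This yields the identity $\bigl(\sum_i v'_i x_i\bigr)^d = \sum_j \mu'_j q_j$ in $\Q[x_1,\ldots,x_n]$.

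Finally, I would clear denominators. Pick a positive integer $N$ such that $N v'_i \in \Z$ for all $i$ and $N^d \mu'_j \in \Z$ for all $j$; such an $N$ exists since there are finitely many rational numbers involved. Setting $v_i := N v'_i \in \Z$, multiplying the identity above by $N^d$ gives
\[
\Bigl(\sum_{i=1}^n v_i x_i\Bigr)^d = N^d \Bigl(\sum_{i=1}^n v'_i x_i\Bigr)^d = \sum_{j=1}^s (N^d \mu'_j)\, q_j,
\]
which exhibits $\bigl(\sum_i v_i x_i\bigr)^d$ as an element of $I_d(M_x) \subset \Z[x_1,\ldots,x_n]$. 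Non-triviality of the $v_i$ follows from that of the $v'_i$, since $N \neq 0$.

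I do not expect a serious obstacle: the argument is essentially bookkeeping once one observes the homogeneity of the generators. The only mild subtlety is confirming that the degree-$d$ part of $I_d(M_x) \otimes_\Z \C$ is the $\C$-span (rather than the $\C[x_1,\ldots,x_n]$-span) of the $q_j$, which follows from the fact that all generators of $I_d(M_x)$ are already homogeneous of the same degree $d$.
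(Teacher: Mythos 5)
Your proof is correct and follows essentially the same route as the paper: exploit homogeneity to reduce membership in $I_d(M_x)\otimes_\Z\C$ to a linear system over $\Q$ in the coefficients $\lambda_j$, deduce a rational solution from the complex one, and then clear denominators. The only difference is that you spell out the final denominator-clearing step (choosing $N$ so that $Nv'_i\in\Z$ and $N^d\mu'_j\in\Z$), which the paper compresses to ``it is immediate.''
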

	\begin{proof}
		Let $m_1$ up to $m_l$ be an enumeration of all monomials in $\Z[x_1, \ldots , x_n]$ with coefficient $1$ of total degree $d$. On the one hand, we have
		$\left(\sum_{i=1}^n v'_ix_i\right)^d = \sum_{k=1}^l b_k(v_1', \ldots , v_n')m_k$,
		where $b_k$ is a monomial in $\Z[v_1', \ldots, v_n']$ of total degree $d$. On the other hand, if $I_d(M_x) = (q_1, \ldots , q_s)$, we can rewrite
		$\sum_{j=1}^s\lambda_jq_j = \sum_{k=1}^l a_k(\lambda_1, \ldots , \lambda_s)m_k$,
		where $a_k$ is a linear expression in $\Z[\lambda_1, \ldots , \lambda_s]$.
		
		By the previous lemma (over $\C$), the condition $\left(\sum_{i=1}^n v'_ix_i\right)^d\in I_d(M_x)\otimes_\Z \C$ implies the existence of $\lambda_j\in\C$ such that
		$\left(\sum_{i=1}^n v'_ix_i\right)^d = \sum_{j=1}^s \lambda_j q_j,$
		or equivalently
		$\sum_{k=1}^l b_k(v_1', \ldots , v_n')m_k = \sum_{k=1}^l a_k(\lambda_1, \ldots , \lambda_n)m_k.$
		Since this expression is an equality of polynomials in the $x_i$, it is equivalent to the following system of equations: 
		$$\forall 1\leq k \leq l: b_k(v_1', \ldots , v_n') = a_k(\lambda_1, \ldots , \lambda_n).$$
	These equations are linear in $\lambda_j$ with rational coefficients. Hence, a solution for $\lambda_j$ over $\C$ readily implies a solution for $\lambda_j$ over $\Q$. It is immediate that a solution over $\Q$ implies a solution over the integers.
	\end{proof}
	Since the lemma above tells us that Theorem \ref{thm_lower_bound} also applies when we replace $I_d(M_x)$ by $I_d(M_x)\otimes_\Z \C$. We will often write $I_d(M_x)$ when we mean $I_d(M_x)\otimes_\Z \C$, in particular in section \ref{sec_examples}.
	\begin{ex}
		Consider $H_3(\Z[i])$ as presented in Example \ref{ex_Heis}. In order to show that $\log^3 \preceq \RF_{H_3(\Z[i])}$, we need to show that there exists a non-trivial solution to 
		$$\left(v_1x_1+v_2x_2\right)^2 = v_1^2 x_1^2 + 2v_1v_2 x_1x_2 + v_2^2x_2^2 = \lambda_1 x_1^2 + \lambda_2 x_1x_2 + \lambda_3 x_2^2 + \lambda_4 (x_1^2+x_2^2),$$
		since $I_2(M_x) = \left(x_1^2,x_1x_2,x_2^2,x_1^2+x_2^2\right)$.
		This reduces to the following system
		$$\begin{cases}
			v_1^2 & = \lambda_1 + \lambda_4\\
			2v_1v_2 & = \lambda_2 \\
			v_2^2 & = \lambda_3 + \lambda_4.
		\end{cases}$$
	A possible solution that is non-trivial in $(v_1,v_2)$ is given by $v_1 = \lambda_1 = 1$ and $v_2 = \lambda_2 = \lambda_3 = \lambda_4 = 0$. Note that $\log^3 \preceq \RF_{H_3(\Z)}$ also follows from the fact that the discrete Heisenberg $H_3(\Z)$ embeds in $H_3(\Z[i])$.
	\end{ex}

	Now we proceed to prove the claimed lower bound.
	\begin{rem}
		Given an $m\times m$ matrix $M \in \Z^{m\times m}$. Recall that the Smith Normal Form of $M$ is a decomposition $M = P\Lambda Q$ with $P,Q \in \GL(m, \Z)$ and $\Lambda \in \Z^{m\times m}$ diagonal with entries $\mu_1$ to $\mu_m$. In the lemma below, we will use that $|\Im_{\Z_{p^k}}M|$ can be computed using the Smith Normal Form via
		$$\dfrac{|\Z_{p^k}^m|}{|\ker\{\Z_{p^k} \to \Z_{p^k}: x\mapsto Mx\}|} = \dfrac{p^{mk}}{|\ker\{\Z_{p^k} \to \Z_{p^k}: x\mapsto \Lambda x\}|} = \dfrac{p^{mk}}{\prod_{i=1}^m \gcd(\mu_i,p^k) }.$$
	\end{rem}
	\begin{lemma} \label{lem_lower_estimate}
		If there exist a non-trivial vector $v = (v_1, \dots , v_n) \in \Z^n$ such that 
		$$\left(\sum_{i=1}^n v_ix_i\right)^d \in I_d(M_x) \subset \Z[x_1, \ldots, x_n],$$
		then for $\kappa \in \N$ we have $D(0, \lcm(1,2, \dots , \kappa)v) > \kappa^{d+1}$.
	\end{lemma}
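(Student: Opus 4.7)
My plan is to apply the explicit formula for the divisibility function from Lemma \ref{lem_Dov} and lower-bound each admissible term via the Smith normal form, combined with the polynomial identity granted by the hypothesis. Setting $L = \lcm(1, 2, \ldots, \kappa)$, I need to verify that for every prime $p$, every $k \in \N$ and every $a \in \Z^n$ with $\gcd(a) = 1$ and $L\, a^T v \not\equiv 0 \pmod{p^k}$, one has $|\Im_{\Z_{p^k}} M_a| \cdot p^k > \kappa^{d+1}$.

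The first step is the observation that any such prime power must satisfy $p^k > \kappa$: otherwise $p^k \mid L$, which would force $L\, a^T v \equiv 0 \pmod{p^k}$, contradicting the constraint. Letting $e = v_p(L)$ denote the largest integer with $p^e \leq \kappa$, the non-congruence rewrites as $v_p(a^T v) \leq k - e - 1$, while the maximality of $e$ gives $p^{e+1} > \kappa$.

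The key and most delicate step is to translate the polynomial hypothesis into an estimate on the $p$-adic valuations $\alpha_i = v_p(\mu_i)$ of the Smith normal form entries $\mu_1 \mid \mu_2 \mid \cdots \mid \mu_m$ of $M_a$. By the two preceding lemmas one can write $(v^T x)^d = \sum_j \lambda_j q_j(x)$ with $\lambda_j \in \Z$ and the $q_j(x)$ exhausting the $d \times d$ minors of $M_x$. Specialising at $x = a$ places $(a^T v)^d$ in the $\Z$-ideal generated by the $d \times d$ minors of $M_a$, which by the classical theory of determinantal divisors coincides with $\mu_1 \cdots \mu_d \cdot \Z$. Taking $p$-adic valuations therefore yields
$$\sum_{i=1}^d \alpha_i \;\leq\; d\, v_p(a^T v) \;\leq\; d(k - e - 1).$$
I expect this to be the main conceptual obstacle, since it is where the purely algebraic hypothesis in $\Z[x_1, \ldots, x_n]$ is converted into concrete arithmetic information about the invariant factors of the specialised integer matrix $M_a$.

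The remainder is a direct computation. The Smith normal form identity recorded in the remark preceding the lemma gives $|\Im_{\Z_{p^k}} M_a| \cdot p^k = p^{(m+1)k - \sum_{i=1}^m \min(\alpha_i, k)}$. Splitting $\sum_{i=1}^m \min(\alpha_i, k) \leq \sum_{i=1}^d \alpha_i + (m-d)k$ and inserting the bound from the previous paragraph gives $\sum_{i=1}^m \min(\alpha_i, k) \leq mk - d(e+1)$, so
$$|\Im_{\Z_{p^k}} M_a| \cdot p^k \;\geq\; p^{k + d(e+1)} \;=\; p^k \cdot (p^{e+1})^d \;>\; \kappa \cdot \kappa^d \;=\; \kappa^{d+1},$$
which is the desired strict inequality and finishes the proof via Lemma \ref{lem_Dov}.
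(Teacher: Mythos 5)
Your proof is correct and follows essentially the same route as the paper: you invoke Lemma \ref{lem_Dov}, specialise the polynomial identity at $a$ to deduce $\mu_1\cdots\mu_d \mid (a^Tv)^d$ via the determinantal-divisor description of the Smith normal form, and then bound $|\Im_{\Z_{p^k}}M_a|\cdot p^k$ exactly as the paper does. The only cosmetic difference is bookkeeping: you parametrise the estimate by $e = v_p(L)$ and note $p^k>\kappa$ and $p^{e+1}>\kappa$ separately, whereas the paper works with $f = v_p(a^Tv)$ and bounds $\kappa < p^{k-f}$ once, but the exponents are equivalent for the purpose of concluding $>\kappa^{d+1}$.
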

	\begin{proof}
		By Lemma \ref{lem_Dov}, $D(0, \lcm(1,2, \dots , \kappa)v)$ equals the following expression:
		$$\min\{|\Im_{\Z_{p^k}} M_a|\cdot p^k \mid \ggd(a) = 1, \lcm(1,2, \dots , \kappa)a^Tv \not\equiv 0 \mod p^k\}.$$
		Now take the power $p^k$ and the vector $a$ realizing this minimum. Let $a^Tv$ equal $bp^{f}$ with $\gcd(b,p) = 1$ and thus $f < k$. The condition  
		$$\lcm(1,2, \dots , \kappa)a^Tv \equiv \lcm(1,2, \dots , \kappa)bp^{f} \not\equiv 0 \mod p^k $$
		implies that $\lcm(1,2, \dots , \kappa) \not\equiv 0 \mod p^{k-f}$, and therefore, $\kappa < p^{k-f} \leq p^k$.
		
		By the assumption, $\left(\sum_{i=1}^n v_ix_i\right)^d$ is a $\Z$-linear combination of $(d\times d)$-minors of $M_x$. Hence, by evaluating in $a$, we observe that $(a^Tv)^d$ lies in the ideal $I_d(M_a) \subset \Z$ generated by the $(d\times d)$-minors of $M_a$. This ideal is spanned by the greatest common divisor of all $(d\times d)$-minors of $M_a$. Let $\mu_1$ to $\mu_m$ be the invariant factors of the Smith Normal Form of $M_a$. By its properties, we now know
		$$\prod_{i=1}^d\mu_i = \gcd( d\times d \text{ minors of }M_a) \mid (a^Tv)^d = (bp^{f})^d .$$
		In particular, we see that $\sum_{i=1}^d \log_p(\gcd(\mu_i,p^k)) \leq df$.
		We now obtain the following estimate
		\begin{equation*}
			\begin{split}
				|\Im_{\Z_{p^k}} M_a |\cdot p^k & = p^{k + km - \sum_{i=1}^m \log_p(\gcd(\mu_i,p^k))} \\
				& \geq p^{(k-f) + km - \sum_{i=1}^d \log_p(\gcd(\mu_i,p^k)) - \sum_{i=d+1}^m \log_p(\gcd(\mu_i,p^k))} \\
				& \geq p^{(k-f) + km - df - k(m-d)} \\
				& \geq p^{(d+1)(k-f)}\\
				& > \kappa^{d+1}.
			\end{split}
		\end{equation*}
		This ends the proof.
	\end{proof}
	\begin{proof}[Proof of Theorem \ref{thm_lower_bound}]
		The lower bound follows from Lemma \ref{lem_lower_estimate}. Indeed, using the standard generators of $\Z^m\times \Z^n$ as a set, we easily see that
		$$(0, \lcm(1,2, \dots, \kappa)v) \in B_{G_\varphi}(C_1\lcm(1,2, \dots , \kappa))$$
		for some $C_1$ depending on $v$.
		For this element we have the estimate
		$$D(0, \lcm(1,2, \dots , \kappa)v) > \kappa^{d+1}.$$
		By the Prime Number Theorem, we know that $\log(\lcm(1 , 2, \dots , \kappa)) \leq C_2 \kappa$ for $C_2>0$ and all $\kappa \in \mathbb{N}$. Therefore,
		$$D(0, \lcm(1,2, \dots , \kappa)v) \geq \dfrac{1}{C_2^{d+1}}\log^{d+1}(\lcm(1, 2 , \dots, \kappa)).$$
		This shows the result.
	\end{proof}
	
	\subsection{Connection With The Upper Bound}
	In this subsection, we relate the lower bound of Theorem \ref{thm_lower_bound} to the upper bound of Theorem \ref{thm_upper_bound}. In order to do so, we first need to give an equivalent definition of $d(\varphi^\C)$. 
	
	\begin{lemma}
		The following numbers are equal:
		\begin{itemize}
			\item $d(\varphi^\C) = \min\{\max_{j=1}^n\{\rank_\mathbb{C}M_{a^{(j)}}\}\mid a^{(1)} \text{ to }a^{(n)}\text{ is a basis of }\mathbb{C}^n\},$
			\item $d_2(\varphi^\C) = \max\{\min\{\rank_{\mathbb{C}} M_a\mid a\in \mathbb{C}^n, a^Tv \neq 0\} \mid 0\neq v\in \mathbb{C}^n\}.$
		\end{itemize}
	\end{lemma}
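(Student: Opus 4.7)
The plan is to prove the two inequalities $d_2(\varphi^\C) \leq d(\varphi^\C)$ and $d(\varphi^\C) \leq d_2(\varphi^\C)$ separately. Throughout, set $d = d(\varphi^\C)$ and $d_2 = d_2(\varphi^\C)$ to shorten notation, and consider the set
$$S = \{a \in \C^n \mid \rank_\C M_a \leq d_2\} \subset \C^n,$$
which is a Zariski-closed subset of $\C^n$ (cut out by the vanishing of the $(d_2+1)\times(d_2+1)$ minors of $M_x$) that is closed under scalar multiplication.

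For the inequality $d_2 \leq d$, I fix any non-zero $v \in \C^n$ and take a basis $a^{(1)}, \ldots, a^{(n)}$ of $\C^n$ realizing the minimum in the definition of $d$. Since these vectors span $\C^n$, the linear functional $a \mapsto a^T v$ cannot vanish on all of them (otherwise $v$ would be orthogonal to everything, forcing $v=0$). So some $a^{(j)}$ satisfies $(a^{(j)})^T v \neq 0$ and simultaneously $\rank_\C M_{a^{(j)}} \leq d$. Hence $\min\{\rank_\C M_a \mid a^T v \neq 0\} \leq d$, and taking the maximum over $v \neq 0$ gives $d_2 \leq d$.

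The other direction is the main point. I claim $S$ spans $\C^n$ as a complex vector space, which will immediately yield $d \leq d_2$: from a spanning set one can extract a basis $a^{(1)}, \ldots, a^{(n)} \in S$, and by definition of $S$ each $\rank_\C M_{a^{(j)}} \leq d_2$, hence $\max_j \rank_\C M_{a^{(j)}} \leq d_2$, proving $d \leq d_2$. To establish the claim, suppose for contradiction that $\text{span}_\C(S) \subsetneq \C^n$; then there exists a non-zero vector $v \in \C^n$ such that $a^T v = 0$ for every $a \in S$. But by the definition of $d_2$ applied to this $v$, there exists some $a \in \C^n$ with $a^T v \neq 0$ and $\rank_\C M_a \leq d_2$, i.e., $a \in S$ with $a^T v \neq 0$, a contradiction. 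So $S$ spans $\C^n$ and the argument is complete.

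The only step that requires a moment of care is recognizing that the witness in the definition of $d_2$ can always be assumed to lie in $S$ with $a^T v \neq 0$ simultaneously (which is immediate from the statement, once one reads the minimum over $\{a \mid a^T v \neq 0\}$ as being actually attained in $S$); the rest is a clean linear-algebra duality argument. I do not expect any serious obstacle.
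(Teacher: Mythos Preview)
Your proof is correct. The direction $d_2 \leq d$ is exactly what the paper does. For $d \leq d_2$, however, you take a genuinely different route: the paper builds a basis $a^{(1)},\ldots,a^{(n)}$ by an explicit inductive procedure (choose $v_1$ realizing the outer maximum, let $a^{(1)}$ realize the corresponding inner minimum, then pick $v_2$ in the kernel of the row matrix $(a^{(1)})^T$, let $a^{(2)}$ realize the minimum for $v_2$, and so on), and then verifies linear independence of the resulting $a^{(j)}$ by a triangular argument against the $v_j$'s. Your argument replaces all of this by the single observation that the low-rank locus $S=\{a\mid \rank_\C M_a\le d_2\}$ must span $\C^n$: if it were contained in a hyperplane $\{a\mid a^Tv=0\}$, then evaluating the defining max-min at that $v$ would produce a witness $a\in S$ with $a^Tv\neq 0$, a contradiction. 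This is cleaner and avoids the bookkeeping of the inductive construction; the paper's version, on the other hand, makes the connection with an actual minimizing basis more explicit, which is closer in spirit to how the basis is later used (over $\Z_p$) in Corollary~\ref{cor_minimal_bases}.

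One small remark: your mention that $S$ is Zariski-closed is true but not needed anywhere in your argument; the proof is pure linear algebra.
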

	\begin{proof}
		First, we focus on the inequality $d(\varphi^\C) \leq d_2(\varphi^\C)$.
		
		Take $v_1$ realizing the maximum of $d_2(\varphi^\C)$ and its corresponding $a^{(1)}$ realizing the minimum. Since the matrix given by the row $(a^{(1)})^T$ is not of full rank, we find a non-zero vector $v_2$ such that $(a^{(1)})^Tv_2 = 0$. Now take $a^{(2)}$ realizing the minimum
		$$\min\{\rank_{\mathbb{C}} M_a\mid a\in \mathbb{C}^n, a^Tv_2 \neq 0\}.$$
		Now the matrix
		$$A_2 = \begin{pmatrix} (a^{(1)})^T \\ (a^{(2)})^T \end{pmatrix}$$
		is still not of full rank, so we can find a non-zero vector $v_3$ such that $A_2v_3 = 0$. Let $a^{(3)}$ be the projection realizing the minimum for $v_3$, as we also did for $v_2$ and $a^{(2)}$.
		
		Proceed like this until one has $n$ vectors $v_1$ to $v_n$ and corresponding projections $a^{(1)}$ to $a^{(n)}$. Those projections are a basis. Indeed, suppose
		$$ \sum_{j=1}^n \lambda_j a^{(j)} = 0,$$ then $\lambda_n = 0$, because
		$$0= 0^Tv_n = \left(\sum_{j=1}^n \lambda_j a^{(j)}\right)^Tv_n = \lambda_n ((a^{(n)})^T v_n),$$
		and $(a^{(n)})^T v_n \neq 0$ by construction. Continuing in this fashion with $v_{n-1}$ up to $v_1$ shows that all $\lambda$ are zero.
		
		Since $v_1$ and $a^{(1)}$ realize $d_2(\varphi^\C)$, we know that for all $1 \leq j \leq n$
		$$\rank_\mathbb{C} M_{a^{(j)}} \leq d_2(\varphi^\C),$$
		and that $\max_{j=1}^n\{\rank_\mathbb{C} M_{a^{(j)}}\} = d_2(\varphi^\C)$.
		In particular, this shows that $d(\varphi^\C) \leq d_2(\varphi^\C)$.
		
For the converse inequality $d_2(\varphi^\C) \leq d(\varphi^\C)$, take a basis $a^{(1)}$ to $a^{(n)}$ realizing the minimum. Take any vector $0\neq v\in \mathbb{C}^n$. There must exist a $1\leq j \leq n$ such that $(a^{(j)})^Tv \neq 0$. Now
		$$ \min\{\rank_{\mathbb{C}}  M_a\mid a\in \mathbb{C}^n, a^Tv \neq 0\} \leq \rank_{\mathbb{C}} M_{a^{(j)}} \leq d(\varphi^\C).$$
		Since $v$ was arbitrary, this shows that $d_2(\varphi^\C) \leq d(\varphi^\C)$.
	\end{proof}
	\begin{lemma} \label{lem_d2_d3}
		There exists an integral vector $v\in \Z^n$ realizing the maximum of 
		$$d_2(\varphi^\C) = \max\{\min\{\rank_{\mathbb{C}} M_a\mid a\in \mathbb{C}^n, a^Tv \neq 0\} \mid 0\neq v\in \mathbb{C}^n\},$$
		i.e. $d_2(\varphi^\C)$ equals
		$$d_3(\varphi^\C) = \max\{\min\{\rank_{\mathbb{C}} M_a\mid a\in \mathbb{C}^n, a^Tv \neq 0\} \mid 0\neq v\in \Z^n\}.$$
	\end{lemma}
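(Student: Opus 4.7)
The plan is to establish the non-trivial inequality $d_3(\varphi^\C) \geq d_2(\varphi^\C)$; the opposite inequality $d_3 \leq d_2$ is immediate from $\Z^n \subset \C^n$. The essential observation is the geometric reformulation that for any $v \in \C^n \setminus \{0\}$,
\[ \min\{\rank_\C M_a \mid a \in \C^n,\ a^T v \neq 0\} \geq d \quad \Longleftrightarrow \quad V(I_d(M_x)) \subset v^\perp, \]
since $V(I_d(M_x))$ is exactly the locus $\{a : \rank_\C M_a < d\}$. Hence
\[ d_2(\varphi^\C) = \max\{d : \exists\, v \in \C^n \setminus \{0\},\; V(I_d(M_x)) \subset v^\perp\}, \]
and the analogous characterization holds for $d_3(\varphi^\C)$ with $v$ restricted to $\Z^n \setminus \{0\}$. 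The task therefore reduces to showing that whenever $V(I_d(M_x))$ lies in some complex hyperplane, it lies in one defined by an integer vector.

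Fix $d = d_2(\varphi^\C)$, write $W = V(I_d(M_x))$, and consider
\[ A = \{v \in \C^n : W \subset v^\perp\}, \]
a $\C$-linear subspace of $\C^n$ that is nonzero by the choice of $d$. The plan is to prove that $A$ admits a $\Q$-basis; once this is in place, the nonzero element of $A$ forces a nonzero rational element of $A$, which scales to an integer vector, and the conclusion follows. Because $I_d(M_x)$ is generated by $d\times d$ minors of $M_x$, a finite list of homogeneous polynomials of degree $d$ with integer coefficients, the variety $W$ is defined over $\Q$. Hilbert's Nullstellensatz identifies $A$ with the degree-one component of the radical $\sqrt{I_d(M_x) \otimes_\Z \C}$, and using that $\Q$ is perfect, this radical equals $\sqrt{I_d(M_x) \otimes_\Z \Q} \otimes_\Q \C$. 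Passing to the degree-one part yields $A = \bigl(\sqrt{I_d(M_x) \otimes_\Z \Q}\bigr)_1 \otimes_\Q \C$, which has a $\Q$-basis.

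The main point deserving care is the rationality of the radical, equivalently the statement that the space of linear forms vanishing on a $\Q$-defined subvariety of $\C^n$ is itself $\Q$-defined. A direct Galois-theoretic proof is possible: $W(\bar\Q)$ is Zariski dense in $W$ and stable under $\mathrm{Gal}(\bar\Q/\Q)$, and for any Galois orbit in $\bar\Q^n$ its $\C$-span is $\Q$-defined, because for a $\Q$-basis $\{\alpha_1,\ldots,\alpha_k\}$ of the underlying field of definition the matrix $(\sigma_i(\alpha_j))$ is invertible (Dedekind independence of characters). Summing over orbits shows $\operatorname{span}_\C(W)$ is $\Q$-defined, and hence so is its annihilator $A$. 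The remaining steps are elementary.
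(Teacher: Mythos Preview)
Your proof is correct and takes a genuinely different route from the paper's. You recast the problem geometrically: the set $A$ of normal vectors to hyperplanes containing $W=V(I_d(M_x))$ is the degree-one part of $\sqrt{I_d(M_x)\otimes_\Z\C}$, and since $I_d(M_x)$ has integer generators and $\Q$ is perfect, this radical is defined over $\Q$, so $A$ contains a nonzero rational (hence integral) vector. The paper instead argues constructively: it invokes the Lefschetz principle to pass from $\C$ to a Galois number field $\mathbb{L}$, takes $v\in\mathcal{O}_{\mathbb{L}}^{\,n}$ realizing $d_2$, forms the integer witness as a trace $v_\Z=\sum_\sigma\sigma(\lambda v)$, and verifies directly (using $\sigma^{-1}(M_a)=M_{\sigma^{-1}(a)}$, which holds because the $A_i$ are integral) that $v_\Z$ still attains the maximum. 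Your argument is shorter and more structural, at the price of importing the commutative-algebra fact that radicals commute with base change along separable field extensions (or, in your alternative, Galois descent for linear subspaces); the paper's argument is longer but essentially self-contained. Note also that your hyperplane reformulation of $d_2$ and $d_3$ is precisely the content of Proposition~\ref{prop_reform_hyperplane}, which the paper establishes only \emph{after} the present lemma; in effect you have interchanged the order of these two results.
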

	\begin{proof}
		It is easily seen that $d_3(\varphi^\C) \leq d_2(\varphi^\C)$. We will show that $d_2(\varphi^\C) \leq d_3(\varphi^\C)$.
		
		First we rewrite the expressions for $d_2(\varphi^\C)$ and $d_3(\varphi^\C)$. Specifically, we claim $\C$ may be replaced by some large enough number field $\mathbb{L}$ in both definitions. Let $v_\Z$ be the vector in $\Z^n$ realizing the maximum of $d_3(\varphi^\C)$. The minimum
		$d_3(\varphi^\C) = \min\{\rank_{\mathbb{C}} M_a\mid a\in \mathbb{C}^n, a^Tv_\Z \neq 0\}$ can be expressed as
		$$(\forall a: a^Tv_\Z \neq 0 \Rightarrow \rank M_a \geq d_3(\varphi^\C))\wedge (\exists \bar{a}: (\bar{a}^Tv_\Z\neq 0) \wedge \rank M_a = d_3(\varphi^\C)).$$
		Note that inequalities concerning the rank of $M_a$ can be expressed using polynomials. Indeed, the rank is at least $d_3(\varphi^\C)$ if some minor of size at least $d_3(\varphi^\C)$ is non-zero. It is equal to $d_3(\varphi^\C)$, if furthermore all minors of size strictly larger than $d_3(\varphi^\C)$ are zero. Hence, this is a sentence in the language of rings, satisfied in the algebraically closed field $\mathbb{C}$. By the Lefschetz Principle (\cite[Theorem 3.5.5]{hils2019first}), it must therefore be satisfied in any algebraically closed field of characteristic zero, in particular in the algebraic closure of $\mathbb{Q}$, which we denote by $\bar{\mathbb{Q}}$. We can therefore conclude that
		$$d_3(\varphi^\C) = \min\{\rank_{\bar{\mathbb{Q}}} M_a\mid a\in \bar{\mathbb{Q}}^n, a^Tv_\Z \neq 0\}.$$
		The vector $\bar{a}$ realizing the minimal rank of $M_{\bar{a}}$ from the statement now lives in $\bar{\mathbb{Q}}^m$. Hence, it lies in some number field $\mathbb{F}_{d_3(\varphi^\C)}$. Hence, $d_3(\varphi^\C)$ also equals
		$$\max\{\min\{\rank_{\mathbb{L}} M_a\mid a\in \mathbb{L}^n, a^Tv \neq 0\} \mid 0\neq v\in \Z^n\}$$
		for all extensions $\mathbb{L}$ of $\mathbb{F}_{d_3(\varphi^\C)}$. 
		
		Analogously, we can express $d_2(\varphi^\C)$ by 
		$$\exists v \neq 0: (\forall a: a^Tv \neq 0 \Rightarrow \rank M_a \geq d_2)\wedge (\exists \bar{a}: (\bar{a}^Tv\neq 0) \wedge \rank M_a = d_2),$$
		saying that there exists a vector $v$ realizing the maximum, so for which the minimum of the rank of $M_a$ is precisely $d_2(\varphi^\C)$. Hence, we can find a number field $\mathbb{F}_{d_2(\varphi^\C)}$ such that $v$ and its corresponding $a$ realizing $d_2(\varphi^\C)$ lie in this field. Therefore,
		$d_2(\varphi^\C)$ equals
		$$\max\{\min\{\rank_{\mathbb{L}} M_a\mid a\in \mathbb{L}^n, a^Tv \neq 0\} \mid 0\neq v\in \mathbb{L}^n\}$$
		for all extensions $\mathbb{L}$ of $\mathbb{F}_{d_2(\varphi^\C)}$. 
		
		Take the number field $\mathbb{L}$ to be an extension of both $\mathbb{F}_{d_2(\varphi^\C)}$ and $\mathbb{F}_{d_3(\varphi^\C)}$. Without loss of generality, we may assume $\mathbb{L}$ is Galois over $\mathbb{Q}$. Let $\{\sigma_1, \dots \sigma_k\}$ be its automorphisms.
		
		Now consider $v\in \mathbb{L}^n$ realizing $d_2(\varphi^\C)$. We will construct $v_\Z$.
		
		We know that $\rank_\mathbb{L} M_a$ is at least $d_2(\varphi^\C)$ for all $a\in\mathbb{L}^n$ with $a^Tv \neq 0$. Replace $v$ with a non-zero multiple such that $v\in (\mathcal{O}_\mathbb{L})^n$, i.e. such that $v$ lies over the algebraic integers in $\mathbb{L}$. Now take $\lambda\in\mathcal{O}_\mathbb{L}$ such that
		$v_\Z = \sum_{j=1}^k \sigma_j(\lambda v)$
		is non-zero. (Here, $\sigma_i$ is applied to the entries of the vector. The existence of such a vector $v_\Z$ follows from \cite[Corollary 3.1.2]{Winter1974Fields}.) By construction, $v_\Z$ is a vector in $\Z^n$. 
		
		We claim that if $a\in\mathbb{L}^n$ and $a^Tv_\Z \neq 0$, then the rank of $M_a$ is at least $d_2(\varphi^\C)$. This shows that
		$$d_2(\varphi^\C) \leq  \max\{\min\{\rank_{\mathbb{L}} M_a\mid a\in \mathbb{L}^n, a^Tv \neq 0\} \mid 0\neq v\in \Z^n\} = d_3(\varphi^\C),$$
		where the last equality was shown in the first part of the proof. 
		
		Suppose $a^Tv_\Z \neq 0$. This equals $\sum_{j=1}^k a^T\sigma_j(\lambda v)$, so at least one term has to be non-zero. Say $a^T\sigma(\lambda v) \neq 0$. Then $\sigma^{-1}(a)^Tv \neq 0$. Since this is non-zero and $v$ realizes the maximum for $d_2(\varphi^\C)$, we know that
		$$d_2(\varphi^\C) \leq \rank_\mathbb{L} M_{\sigma^{-1}(a)}.$$
		However,
		$$M_{\sigma^{-1}(a)} = \sum_{i=1}^n \sigma^{-1}(a_i)A_i = \sigma^{-1}(\sum_{i=1}^na_iA_i) = \sigma^{-1}(M_a),$$
		using that the matrices $A_i$ are integral. In total, we find
		$$d_2(\varphi^\C) \leq \rank_\mathbb{L} M_{\sigma^{-1}(a)} = \rank_\mathbb{L} \sigma^{-1}(M_{a}) = \rank_\mathbb{L} M_{a}.$$
		This shows the claim and ends the proof of the equality $d_2(\varphi^\C) = d_3(\varphi^\C)$.
	\end{proof}
	We can reinterpret $d_2(\varphi^\C)$ (and $d_3(\varphi)$) using the ideals $I_d(M_x)$.
	\begin{prop} \label{prop_reform_hyperplane}
		We have the equalities
		\begin{equation*}
			\begin{split}
				d_2(\varphi^\C) 
				& = \max\{d \mid V(I_d(M_x)) \text{ lies in a (non-zero) linear hyperplane}\}\\
				& = \max\{d \mid \exists v \in \C^n: \exists k\geq d: \left(v^Tx\right)^k \in I_d(M_x)\}\\
				& = \max\{d \mid \exists v \in \Z^n: \exists k\geq d: \left(v^Tx\right)^k \in I_d(M_x)\}.
			\end{split}
		\end{equation*}
	\end{prop}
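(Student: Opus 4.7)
Denote the three right-hand sides by $d_H$, $d_P^{\C}$, $d_P^{\Z}$, in the order given. The plan is to prove the chain $d_2(\varphi^\C)=d_H=d_P^{\C}$ directly from definitions plus Hilbert's Nullstellensatz, and then to derive $d_P^{\Z}=d_P^{\C}$ by invoking Lemma \ref{lem_d2_d3} together with a denominator-clearing trick.

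First, I would identify $V(I_d(M_x))$ with the rank-drop locus $\{a\in\C^n:\rank_\C M_a < d\}$, using that a matrix has rank less than $d$ iff every $d\times d$ minor vanishes. An inclusion $V(I_d(M_x))\subset\{v^Tx=0\}$ with $v\neq 0$ is therefore equivalent to the implication ``$v^Ta\neq 0\Rightarrow \rank M_a\geq d$'', and maximizing this over $0\neq v\in\C^n$ recovers the preceding characterization of $d_2(\varphi^\C)$. This gives the first equality.

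Next, I apply Hilbert's Nullstellensatz to the homogeneous ideal $I_d(M_x)\otimes_\Z\C\subset\C[x_1,\dots,x_n]$: a linear form $v^Tx$ vanishes on $V(I_d(M_x))$ iff it lies in the radical of this ideal, iff $(v^Tx)^k\in I_d(M_x)\otimes_\Z\C$ for some $k\in\N$. Since $I_d(M_x)$ is generated in degree $d$ and $(v^Tx)^k$ has degree $k$ (nonzero because $v\neq 0$), the inequality $k\geq d$ is automatic. Thus $d_H=d_P^{\C}$.

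Finally, I turn to $d_P^{\Z}=d_P^{\C}$. One direction is trivial since $\Z^n\subset\C^n$ and $I_d(M_x)\subset I_d(M_x)\otimes_\Z\C$. For the nontrivial direction, Lemma \ref{lem_d2_d3} supplies an integer vector $v_0\in\Z^n\setminus\{0\}$ realizing $d_2(\varphi^\C)=d_3(\varphi^\C)$; by Step 1, $V(I_{d_P^{\C}}(M_x))\subset\{v_0^Tx=0\}$, and by Step 2 there exists $k\geq d_P^{\C}$ with $(v_0^Tx)^k\in I_{d_P^{\C}}(M_x)\otimes_\Z\C$. Writing such a relation $(v_0^Tx)^k=\sum_j p_j(x)\,q_j(x)$ with the $q_j$ a fixed integer generating set of $I_{d_P^{\C}}(M_x)$ and equating monomial coefficients in the $x_i$ produces a $\Q$-defined linear system in the coefficients of the $p_j$ with integer right-hand side (since $v_0$ is integer); a $\C$-solution therefore implies a $\Q$-solution, so the $p_j$ may be taken in $\Q[x]$. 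Clearing a common denominator $N$ gives $N(v_0^Tx)^k\in I_{d_P^{\C}}(M_x)\subset\Z[x_1,\dots,x_n]$; absorbing the $N$ into $v_0$ by setting $v_0':=Nv_0$ yields
\[
((v_0')^Tx)^k=N^k(v_0^Tx)^k=N^{k-1}\cdot\bigl(N(v_0^Tx)^k\bigr)\in I_{d_P^{\C}}(M_x),
\]
using that the ideal is closed under integer multiplication. Hence the integer vector $v_0'$ witnesses $d_P^{\Z}\geq d_P^{\C}$.

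The main obstacle is the last step: moving from a complex membership in $I_d(M_x)\otimes_\Z\C$ to an integer membership in $I_d(M_x)$ while keeping the linear form $v^Tx$ integer. The scaling trick $v_0\mapsto Nv_0$ handles this cleanly because rescaling $v_0$ by an integer does not alter the hyperplane $\{v_0^Tx=0\}$ but contributes exactly the powers of $N$ needed to absorb denominators. Using Lemma \ref{lem_d2_d3} to produce an integer $v_0$ at the outset (rather than attempting Galois descent directly inside this proof) keeps the argument self-contained.
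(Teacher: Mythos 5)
Your proof is correct and follows the paper's approach for the first two equalities almost exactly: identifying $V(I_d(M_x))$ with the rank-drop locus for the first, and invoking the homogeneous Nullstellensatz (plus the degree-$d$ generation of $I_d(M_x)$ forcing $k\geq d$) for the second. For the third equality, the paper's proof simply states that it is ``a reformulation of Lemma \ref{lem_d2_d3}'', i.e., that the maximizing vector can be taken integral, which suffices if one reads $I_d(M_x)$ under the paper's standing convention as $I_d(M_x)\otimes_\Z\C$. You instead prove the stronger reading where membership is in the integral ideal $I_d(M_x)\subset\Z[x]$: you start from the integral $v_0$ supplied by Lemma \ref{lem_d2_d3}, descend a complex relation to a rational one via a finite linear system in the coefficients of the cofactor polynomials $p_j$ (finiteness holding because the $p_j$ have degree $k-d$), and then clear denominators by rescaling $v_0\mapsto Nv_0$ so that the $N^{k-1}$ factor absorbs the denominator. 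This extra step is sound — it is essentially the same denominator-clearing mechanism the paper itself uses in the lemma preceding this proposition (there only for $k=d$), extended to all $k\geq d$ — and it closes a small gap in the paper's brevity if one insists on the integral reading of $I_d(M_x)$. One minor point: your appeal to ``Step 1'' to get $V(I_{d_P^\C}(M_x))\subset\{v_0^Tx=0\}$ requires that the integral $v_0$ from Lemma \ref{lem_d2_d3} actually witnesses the inclusion, which does hold because $v_0$ realizes the maximum in $d_2$ and the first equality's argument derives the inclusion from exactly that maximizing property; it is worth saying explicitly rather than treating it as automatic.
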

	\begin{proof}
		We start by showing the first equality.
		
		The variety $V(I_{d_2(\varphi^\C)}(M_x))$ lies in a linear hyperplane. Indeed, take $v\in \C^n$ to be the vector realizing the maximum in the definition of $d_2(\varphi^\C)$. In other words, we take the vector such that $d_2(\varphi^\C) = \min\{\rank_{\mathbb{C}} M_a\mid a\in \mathbb{C}^n, a^Tv \neq 0\}$. Hence, $\rank_{\mathbb{C}} M_a < d_2(\varphi^\C)$ only if $a^Tv = 0$. Recall that in general the rank of a matrix $M\in \C^{m\times m}$ is smaller than a number $d$ if and only if all $(d\times d)$-minors of $M$ are zero. Therefore, $V(I_{d_2(\varphi^\C)}(M_x))$ lies in the hyperplane $v^Tx=0$.
		
		Now let $d > d_2(\varphi^\C)$. We claim that $V(I_d(M_x))$ does not lie in a linear hyperplane. Indeed, if it would lie in a hyperplane $w^Tx=0$, then the rank of $M_a$ would be at least $d$ for all $a\in \C^n$ with $a^Tw \neq 0$. Therefore, $d \leq \min\{\rank_{\mathbb{C}} M_a\mid a\in \mathbb{C}^n, a^Tw \neq 0\} \leq d_2(\varphi^\C)$. This is a contradiction.
		
		The second equality in the statement is precisely the content of Hilbert's Nullstellensatz, in combination with the fact that $k\geq d$, since $I_d(M_x)$ is generated by homogeneous polynomials of degree $d$. The third equality is a reformulation of Lemma \ref{lem_d2_d3}: it says that the vector $v$ attaining the maximum can be chosen to lie in $\Z^n$.
	\end{proof}
	Combining Proposition \ref{prop_reform_hyperplane} above and Theorems \ref{thm_upper_bound} and \ref{thm_lower_bound}, we can formulate the following link between the upper and lower bound.
	\begin{prop} \label{prop_connection_upper_lower}
		Let $G_\varphi$ be a two-step $\mathcal{I}$-group. Then, $\log^{\delta +1} \preceq \RF_{G_\varphi} \preceq \log^{d(\varphi^\C)+1}$, where
		$$\delta = \max\{d \mid \exists v \in \Z^n: \left(v^Tx\right)^d \in I_d(M_x)\} \leq  \max\{d \mid \exists v \in \Z^n: \exists k\geq d: \left(v^Tx\right)^k \in I_d(M_x)\} = d(\varphi^\C).$$ 
	\end{prop}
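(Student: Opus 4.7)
The proposal is to assemble Proposition \ref{prop_connection_upper_lower} directly from results already established in the excerpt; almost nothing new needs to be proved.

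First, the upper bound $\RF_{G_\varphi} \preceq \log^{d(\varphi^\C)+1}$ is nothing but Theorem \ref{thm_upper_bound}, so that half of the inequality requires no extra work. For the lower bound, I invoke Theorem \ref{thm_lower_bound}: whenever there exists a non-trivial $v \in \Z^n$ with $(v^Tx)^d \in I_d(M_x)$, we get $\log^{d+1} \preceq \RF_{G_\varphi}$. Since by definition
\[
\delta = \max\{\,d \mid \exists\, 0\neq v \in \Z^n:\ (v^Tx)^d \in I_d(M_x)\,\},
\]
taking $d = \delta$ immediately yields $\log^{\delta+1} \preceq \RF_{G_\varphi}$ (one should briefly note that the set of admissible $d$ is bounded above by $m$, so the max is attained).

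For the identification $d(\varphi^\C) = \max\{d \mid \exists v \in \Z^n,\ \exists k \geq d:\ (v^Tx)^k \in I_d(M_x)\}$, I chain together the earlier results. The previous lemma shows $d(\varphi^\C) = d_2(\varphi^\C)$ and Lemma \ref{lem_d2_d3} shows $d_2(\varphi^\C) = d_3(\varphi^\C)$, i.e.\ that the witnessing vector can be chosen in $\Z^n$. Proposition \ref{prop_reform_hyperplane} then rewrites $d_2(\varphi^\C)$ (equivalently $d_3(\varphi^\C)$) as $\max\{d \mid \exists v \in \Z^n:\ \exists k\geq d:\ (v^Tx)^k \in I_d(M_x)\}$. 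So the claimed characterization of $d(\varphi^\C)$ is just the composition of these three statements.

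Finally, the inequality $\delta \leq d(\varphi^\C)$ is immediate from comparing the two \emph{max} conditions: the condition defining $\delta$ is the special case of the condition defining $d(\varphi^\C)$ obtained by forcing $k = d$, so every $d$ admissible for $\delta$ is admissible for $d(\varphi^\C)$, and the maximum can only grow. Assembling these pieces gives the full chain
\[
\log^{\delta+1} \preceq \RF_{G_\varphi} \preceq \log^{d(\varphi^\C)+1}, \qquad \delta \leq d(\varphi^\C).
\]
Because every ingredient is already in place, there is no real obstacle; the only thing to be mildly careful about is the bookkeeping of which lemma supplies which equality and the observation that the relevant maxima are taken over finite (or at least bounded) sets so that ``$\max$'' is literally attained rather than merely a supremum.
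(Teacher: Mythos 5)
Your proposal is correct and matches the paper's approach exactly: the paper itself presents Proposition \ref{prop_connection_upper_lower} as an immediate consequence of combining Theorems \ref{thm_upper_bound} and \ref{thm_lower_bound} with Proposition \ref{prop_reform_hyperplane} (which already subsumes the chain $d(\varphi^\C)=d_2(\varphi^\C)=d_3(\varphi^\C)$), with the inequality $\delta\leq d(\varphi^\C)$ coming from restricting $k\geq d$ to $k=d$. Your added remarks about non-triviality of $v$ and the boundedness of the set of admissible $d$ are sensible housekeeping points that the paper leaves implicit.
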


	The authors have no knowledge of an example where $\delta$ does not equal $d(\varphi^\C)$. Hence, we state the following conjecture:
	\begin{conj} \label{conj_minors}
		Let $G_\varphi$ be a two-step $\mathcal{I}$-group. Then $\RF_{G_\varphi} = \log^{d(\varphi^\C)+1}$.
	\end{conj}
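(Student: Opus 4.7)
The conjecture asserts $\RF_{G_\varphi} = \log^{d+1}$ with $d := d(\varphi^\C)$. By Proposition \ref{prop_connection_upper_lower} combined with Theorems \ref{thm_upper_bound} and \ref{thm_lower_bound}, everything reduces to the purely algebraic statement $\delta = d(\varphi^\C)$, equivalently: whenever $(v^Tx)^k \in I_d(M_x)$ for some $v \in \Z^n \setminus \{0\}$ and some $k \geq d$, there must exist $w \in \Z^n \setminus \{0\}$ with $(w^Tx)^d \in I_d(M_x)$. All effort should be devoted to this implication; once it is established, Theorem \ref{thm_lower_bound} applied with this $w$ and with $d$ in place of $\delta$ produces the matching lower bound $\log^{d+1} \preceq \RF_{G_\varphi}$.

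The natural starting point is the skew-symmetry of $M_x$: the rank-$<d$ locus coincides with the rank-$\leq d-2$ locus, $d$ is necessarily even, and the Pfaffian ideal $P_d(M_x)$ generated by the $d \times d$ principal Pfaffians (polynomials of degree $d/2$) satisfies $\sqrt{I_d(M_x)} = \sqrt{P_d(M_x)}$. If one could promote the hypothesis to the tight membership $(v^Tx)^{d/2} \in P_d(M_x)$, then squaring would express $(v^Tx)^d$ as a $\Z$-linear combination of products $\mathrm{Pf}_i \mathrm{Pf}_j$: the diagonal terms $\mathrm{Pf}_i^2$ are precisely the $d \times d$ principal minors of $M_x$, while the cross terms would have to be rewritten inside $I_d(M_x)$ via Pfaffian Laplace-type identities and the classical relations between minors and Pfaffians of various sizes for skew-symmetric matrices. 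This second reduction is already non-trivial since $P_d(M_x)^2 \not\subset I_d(M_x)$ in general, so several Pfaffian relations must be combined simultaneously to recover individual minor generators.

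The main obstacle is promoting the radical membership $v^Tx \in \sqrt{P_d(M_x)}$ to the tight membership $(v^Tx)^{d/2} \in P_d(M_x)$: Hilbert's Nullstellensatz only yields some possibly very large exponent, and because $V(P_d(M_x))$ typically carries non-reduced scheme structure along generic rank strata, naive degree-counting arguments fail. I would try to exploit the deep structure theory of Pfaffian ideals, notably their Cohen-Macaulayness, Gorensteinness, and the explicit minimal free resolutions due to J\'ozefiak-Pragacz, De Concini-Procesi, and Kurano, to bound graded pieces of $P_d(M_x)$ and upgrade the set-theoretic containment to the algebraic one. In parallel, I would try to induct on $n$ by quotienting out a carefully chosen generator of the center $(0, \Z^n)$ and tracking how $d(\varphi^\C)$ evolves, extending the explicit techniques of Section \ref{sec_examples} (which handle $n \leq 2$) first to $n = 3$ and then attempting to identify a general inductive mechanism. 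Finally, Gr\"obner-basis experiments for random $\varphi$ should either produce a counterexample or strong numerical evidence pinpointing the minimal setting where the implication $\delta = d(\varphi^\C)$ first becomes genuinely non-trivial.
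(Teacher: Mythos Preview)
The statement you are attempting to prove is labeled as a \emph{Conjecture} in the paper, and the paper explicitly does not prove it in general. Immediately before stating it the authors write: ``The authors have no knowledge of an example where $\delta$ does not equal $d(\varphi^\C)$. Hence, we state the following conjecture.'' There is therefore no proof in the paper to compare your attempt against. The paper establishes the conjecture only for $\mathcal{I}(m,1)$- and $\mathcal{I}(m,2)$-groups (Section \ref{sec_examples}), via the Kronecker normal form for skew pencils in two variables (Theorem \ref{thm_pencils}) together with an explicit combinatorial analysis of the minor ideals of the blocks; this machinery has no evident analogue for $n \geq 3$.

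Your proposal is not a proof but a research outline, and you are candid about this: you write ``I would try to exploit\ldots'', ``I would try to induct\ldots'', and you suggest Gr\"obner-basis experiments to search for counterexamples. Your identification of the algebraic obstruction is accurate --- the gap is precisely the passage from $(v^Tx)^k \in I_d(M_x)$ for \emph{some} $k \geq d$ to $(w^Tx)^d \in I_d(M_x)$, and this is exactly what the paper leaves open. The Pfaffian reformulation is a reasonable direction, but as you yourself observe, $P_d(M_x)^2 \not\subset I_d(M_x)$ in general, so even the tight Pfaffian membership $(v^Tx)^{d/2} \in P_d(M_x)$ would not immediately yield the desired conclusion. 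None of the tools you list (Cohen--Macaulayness, the J\'ozefiak--Pragacz resolution, induction on $n$) is accompanied by an argument explaining why it would close the gap; they are plausible avenues, not steps in a proof. In short: the conjecture is open in the paper, and your proposal correctly locates the difficulty without resolving it.
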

	In particular, we conjecture that the power of logarithm is always odd.
	\section{$2$-step nilpotent groups with commutator of small rank} \label{sec_examples}
	In this section, we will show that Conjecture \ref{conj_minors} holds at least for $\mathcal{I}(m,1)$- and $\mathcal{I}(m,2)$-groups.
	
	As a first observation, we show that in general we may assume the hyperplane $\sum_{i=1}^n v_ix_i = 0$ of Proposition \ref{prop_connection_upper_lower} is given by $x_n = 0$.
	\begin{lemma} \label{lem_choose_hyperplane}
		Let $d\in \N$. Let $M$ be a skew-symmetric, $m\times m$ matrix of the form $\sum_{i=1}^n x_iA_i$ with $A_i \in \C^{m\times m}$, for which $V(I_d(M))$ lies in a (complex) hyperplane $\sum_{i=1}^n v_ix_i = 0$. Then there exists a skew-symmetric, $m\times m$ matrix $M'$ of the form $\sum_{i=1}^n x'_iA'_i$ with $A'_i \in \C^{m\times m}$ such that
		$$\left(\sum_{i=1}^n v_ix_i\right)^d \in I_d(M) \Leftrightarrow \left(x_n'\right)^d \in I_d(M').$$
	\end{lemma}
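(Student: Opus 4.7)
The plan is to realize the claimed equivalence simply as a linear change of coordinates on $\C^n$ that sends the linear form $\sum_i v_i x_i$ to $x_n'$ and carries the family of skew-symmetric matrices $\{A_i\}$ to a new family $\{A_j'\}$, in such a way that the ideal of $d\times d$ minors is mapped to the ideal of $d\times d$ minors of the transformed matrix. Since $v = (v_1,\ldots,v_n)$ is non-zero, I can complete $v$ to a basis $\{v^{(1)},\ldots,v^{(n-1)},v\}$ of $\C^n$ and form the invertible matrix $T\in\GL(n,\C)$ whose rows are $(v^{(1)})^T, \ldots, (v^{(n-1)})^T, v^T$. This $T$ will be the change-of-basis matrix; note that with $y := Tx$, the last coordinate $y_n$ equals $\sum_i v_i x_i$ by construction.

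Next, I substitute $x_i = \sum_j (T^{-1})_{ij}\, y_j$ into $M = \sum_i x_i A_i$ to get
$$M \;=\; \sum_j y_j \underbrace{\left(\sum_i (T^{-1})_{ij} A_i\right)}_{=: \, A_j'} \;=\; \sum_j y_j A_j'.$$
Each $A_j'$ is a $\C$-linear combination of skew-symmetric matrices, hence itself skew-symmetric. Renaming the new variables $y_j$ as $x_j'$, this defines a skew-symmetric $m\times m$ matrix $M' = \sum_j x_j' A_j'$ of the required form.

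The key observation is that the substitution $\Phi : \C[x_1,\ldots,x_n] \to \C[x_1',\ldots,x_n']$ given by $x_i \mapsto \sum_j (T^{-1})_{ij}\, x_j'$ is a ring isomorphism, because $T$ is invertible. Since $\Phi$ is applied entrywise to $M$ to yield $M'$, every $d\times d$ minor of $M$ is mapped by $\Phi$ to the corresponding $d\times d$ minor of $M'$, and vice versa. Therefore $\Phi\bigl(I_d(M)\bigr) = I_d(M')$. Moreover, using that the last row of $T$ is $v^T$, we have $v^T T^{-1} = e_n^T$, so
$$\Phi\!\left(\sum_i v_i x_i\right) \;=\; \sum_i v_i \sum_j (T^{-1})_{ij}\, x_j' \;=\; \sum_j (v^T T^{-1})_j\, x_j' \;=\; x_n'.$$
Applying $\Phi$ to the containment $\left(\sum_i v_i x_i\right)^d \in I_d(M)$ then yields $\left(x_n'\right)^d \in I_d(M')$, and running $\Phi^{-1}$ backwards gives the converse implication.

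Essentially no step is hard: the only thing to be careful about is keeping straight the distinction between ideals in $\C[x]$ and in $\C[x']$, which is resolved by the explicit ring isomorphism $\Phi$. The lemma is really a bookkeeping statement saying that one may, without loss of generality, take the hyperplane containing $V(I_d(M_x))$ in Proposition \ref{prop_reform_hyperplane} to be the coordinate hyperplane $x_n = 0$, which will make the explicit computations in the cases of commutator subgroup of rank $1$ and $2$ tractable in the following section.
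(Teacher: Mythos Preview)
Your proof is correct and follows essentially the same approach as the paper's own proof: both perform a linear change of coordinates on $\C^n$ sending the given linear form $v^Tx$ to a coordinate $x_n'$, and observe that this induces a ring isomorphism carrying $I_d(M)$ to $I_d(M')$. The only difference is notational---the paper writes the change of basis as $P\in\GL(n,\C)$ with $Pv=e_n$ and $x=P^Tx'$, which corresponds to your $T=(P^{-1})^T$---and you make the ring isomorphism $\Phi$ slightly more explicit.
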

	\begin{proof}
		The vector $v = (v_1, \ldots , v_n)$ is non-zero, since $\sum_{i=1}^n v_ix_i = 0$ represents a non-zero hyperplane. Hence, we can take a transformation $P \in \GL(n, \C)$ such that $Pv = e_n$.
		
		Define the vector of variables $x' = (x_1', \ldots, x_n')$ via $P^Tx' = x$ with $x = (x_1, \ldots , x_n)$. Define also $A'_j$ by $\sum_{i=1}^n P_{j,i}A_i$. Now, we see that
		$$M = \sum_{i=1}^n x_iA_i =  \sum_{i=1}^n(\sum_{j=1}^n P_{j,i}x'_j)A_i = \sum_{j=1}^n x'_j\sum_{i=1}^n P_{j,i}A_i = \sum_{j=1}^n x'_jA'_j = M',$$
		and
		$$x_n' = e_n^Tx' = (Pv)^Tx' = v^Tx.$$
		Therefore, $I_d(M) = I_d(M')$ under the correspondence $P^Tx' = x$, where the linear combination $v^Tx$ defining the hyperplane $v^Tx = 0$ becomes the monomial $x_n'$ with corresponding hyperplane $x_n'=0$.
	\end{proof}
	
	Now, we will show Conjecture \ref{conj_minors} holds for $\mathcal{I}(m,1)$-groups.
		\begin{lemma} \label{lem_one_matrix}
		Let $G_\varphi$ be an $\mathcal{I}(m,1)$-group. Then $\varphi$ is given by $\varphi(w,w') = w^TAw'$ for some skew-symmetric matrix, and $\RF_{G_\varphi}(r) = \log^{(\rank A) + 1}(r)$.
	\end{lemma}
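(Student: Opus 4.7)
The statement has two parts. The first, that $\varphi$ is given by a single skew-symmetric matrix, is just the specialization of the general description in Section \ref{sec_twostep} to the case $n = 1$: the bilinear map $\varphi\colon \Z^m\times\Z^m \to \Z^1 = \Z$ is represented by a single skew-symmetric $m\times m$ matrix $A$, so $\varphi(w,w') = w^TAw'$. The work is in the second part: identifying $\RF_{G_\varphi}(r)$ with $\log^{(\rank A)+1}(r)$, for which I plan to use the framework of Proposition \ref{prop_connection_upper_lower}.

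The idea is to compute both $d(\varphi^\C)$ and $\delta$ directly and show they both equal $\rank A$. Since $n=1$, the only parameter $x = x_1$ is a single variable, and $M_x = x A$. For the upper bound, any basis $\{a^{(1)}\}$ of $\C^1$ consists of a single non-zero scalar, so $M_{a^{(1)}} = a^{(1)} A$, whose complex rank is exactly $\rank A$. Taking the minimum over such bases is therefore vacuous, giving $d(\varphi^\C) = \rank A$ immediately. For the lower bound, I would observe that the $d\times d$ minors of $M_x = xA$ are precisely $x^d$ times the $d\times d$ minors of $A$, so $I_d(M_x) = x^d \cdot I_d(A)$ as ideals in $\C[x]$ (using the convention after Proposition \ref{prop_reform_hyperplane} of tensoring with $\C$).

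With this identification, showing $\delta \geq \rank A$ reduces to finding, for $d = \rank A$, an integer $v \in \Z$ with $(vx)^d \in I_d(M_x)$. Since $\rank A \geq d$, the ideal $I_d(A) \subset \C$ is non-zero, hence equal to all of $\C$, so $(1 \cdot x)^d = x^d \in x^d I_d(A) = I_d(M_x)$ and $v = 1$ works. Conversely, for $d > \rank A$ every $d\times d$ minor of $A$ vanishes, so $I_d(A) = 0$ and therefore $I_d(M_x) = 0$, which forces $v = 0$. Hence $\delta = \rank A$.

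Putting these equalities together, Proposition \ref{prop_connection_upper_lower} yields
\[
\log^{(\rank A)+1} \;=\; \log^{\delta+1} \;\preceq\; \RF_{G_\varphi} \;\preceq\; \log^{d(\varphi^\C)+1} \;=\; \log^{(\rank A)+1},
\]
so $\RF_{G_\varphi}(r) = \log^{(\rank A)+1}(r)$, as claimed. I do not anticipate any genuine obstacle: the case $n=1$ collapses both extremal quantities in the sandwich of Proposition \ref{prop_connection_upper_lower} to the single rank invariant of $A$, with no interaction between distinct defining matrices to worry about. The only small care required is in invoking the convention that $I_d(M_x)$ is taken over $\C$, so that the triviality $1 \in I_d(A)$ is available whenever $\rank A \geq d$.
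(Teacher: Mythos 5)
Your proof is correct and takes essentially the same approach as the paper: compute $d(\varphi^\C) = \rank A$ for the upper bound, exhibit a witness $v$ so that Theorem \ref{thm_lower_bound} (via Proposition \ref{prop_connection_upper_lower}) gives the matching lower bound, and sandwich. The only cosmetic difference is that the paper works over $\Z$ directly (taking $v = \Delta$ where $\Delta$ is a non-zero $(\rank A\times\rank A)$-minor of $A$, so $(\Delta x)^{\rank A} = \Delta^{\rank A - 1}\cdot\Delta x^{\rank A}\in I_{\rank A}(M_x)$), whereas you work over $\C$ with $v=1$ and appeal to the lemma converting a complex witness back to an integral one.
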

	\begin{proof}
		By definition, $\varphi$ is given by $\varphi(w,w') = w^TAw'$ for some skew-symmetric matrix. Also, it is clear that $d(\varphi^\C) = \rank A$. Note that $A$ has a non-zero (integral) $(\rank A \times \rank A)$-minor $\Delta$ by definition of the rank. Now $\Delta x^{\rank A}$ is a $(\rank A \times \rank A)$-minor of $M_x = xA$. Therefore, Theorem \ref{thm_lower_bound} says that $\RF_{G_\varphi} = \log^{(\rank A) + 1}$. 
	\end{proof}
	\begin{cor}
		For every odd number $2d+1 \geq 3$, there exists a two-step $\mathcal{I}$-group $G$ such that $\RF_G = \log^{2d+1}$.
	\end{cor}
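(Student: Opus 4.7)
The plan is to reduce the corollary immediately to Lemma~\ref{lem_one_matrix}, which tells us that every $\mathcal{I}(m,1)$-group $G_\varphi$ attached to a skew-symmetric integer matrix $A$ satisfies $\RF_{G_\varphi} = \log^{(\rank A)+1}$. Since the rank of a skew-symmetric matrix is necessarily even, any odd exponent $2d+1$ with $d\geq 1$ is realized as soon as one can exhibit an integer skew-symmetric matrix of rank exactly $2d$ that defines a full alternating bilinear form $\Z^{2d}\times \Z^{2d}\to \Z$.

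Concretely, I would take $A\in \Z^{2d\times 2d}$ to be the block-diagonal matrix with $d$ copies of $J=\begin{pmatrix}0&1\\-1&0\end{pmatrix}$ on the diagonal, and set $\varphi(w,w')=w^TAw'$. Then $\varphi$ is bilinear and alternating by construction; fullness is immediate because $\varphi(e_1,e_2)=1$, so the image generates all of $\Z$; and $\rank A=2d$ because $A$ is a direct sum of the rank-two blocks $J$. Hence $G_\varphi$ is a genuine $\mathcal{I}(2d,1)$-group and Lemma~\ref{lem_one_matrix} yields
\[
\RF_{G_\varphi}=\log^{(\rank A)+1}=\log^{2d+1}.
\]

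There is essentially no obstacle: the only tiny thing to verify is that the chosen $A$ satisfies the fullness hypothesis of the definition of an $\mathcal{I}(m,n)$-group, which is trivial in this construction. All asymptotic content is already packaged into Lemma~\ref{lem_one_matrix}, so nothing further needs to be computed.
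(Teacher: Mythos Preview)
Your proposal is correct and matches the paper's intended argument: the corollary is stated immediately after Lemma~\ref{lem_one_matrix} with no separate proof, and your explicit choice of the block-diagonal matrix $A=\bigoplus_{i=1}^d J$ is precisely the natural witness that makes the corollary an immediate consequence of that lemma.
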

	
	From now on, we will focus on matrices $M_x$ of the form $x_1A_1 + x_2A_2$ associated to $\varphi = (A_1,A_2)$ of an $\mathcal{I}(m,2)$-group $G_\varphi$. We will prove the following statement.
	\begin{prop} \label{prop_conj_2dim}
		Let $M_x$ be a skew-symmetric, $m\times m$ matrix of the form $x_1A_1 + x_2A_2$ with $A_i \in \C^{m\times m}$. If $d$ is the largest number such that the variety $V(I_d(M_x))$ lies in the hyperplane $x_2 = 0$, then $(x_2)^{d} \in I_d(M_x)$.
	\end{prop}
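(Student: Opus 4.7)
My plan is to reduce the statement via the Cauchy--Binet formula to the existence of a well-chosen $d$-dimensional subspace $V_0 \subset \C^m$, and then construct such a subspace using the skew-symmetric Smith normal form together with the maximality of $d$. For any constant matrix $U \in \C^{m \times d}$ whose columns span $V_0$, applying Cauchy--Binet twice yields
\[ \det(U^T M_x U) = \sum_{|I| = |J| = d} \det(U|_I) \, \det(M_x|_I^J) \, \det(U|_J), \]
where $U|_I$ denotes the $d \times d$ submatrix of $U$ consisting of the rows indexed by $I$. The right-hand side is a $\C$-linear combination of the $d \times d$ minors of $M_x$, since the coefficients $\det(U|_I) \det(U|_J)$ are independent of $x$. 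Thus, if we find a constant $V_0$ with $\det(M_x|_{V_0}) = c \, x_2^d$ for some nonzero $c \in \C$, then $x_2^d \in I_d(M_x)$. Since $\det(M_x|_{V_0})$ is homogeneous of degree $d$ in $(x_1, x_2)$, this condition is equivalent to: (a) $M_x|_{V_0}$ is non-degenerate for every $(x_1, x_2)$ with $x_2 \neq 0$, and (b) $\det(A_1|_{V_0}) = 0$. The problem is thus reduced to constructing such a $V_0$.

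Next I would observe that $d$ must be even: the rank of any skew-symmetric matrix is even, and the maximality of $d$ provides a $t_0 \in \C$ with $\rank(t_0 A_1 + A_2) = d$, ruling out odd $d$. Writing $d = 2e$, the skew-symmetric Smith normal form of $M_t = t A_1 + A_2$ over $\C[t]$ gives $M_t = P(t) \, \Omega \, P(t)^T$ with $P(t) \in \GL_m(\C[t])$ and $\Omega = J^{(e)} \oplus \Omega'$ block-diagonal, where $J^{(e)}$ is the standard symplectic form on the first $d$ coordinates. The hypothesis $\rank M_t \geq d$ for all $t \in \C$ is precisely what forces the first $e$ invariant factors of the Smith form to be nonzero constants (normalized to $1$). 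Over $\C(t)$, the first $d$ columns of $P(t)^{-T}$ give a rational family of subspaces on which $M_t$ restricts to the non-degenerate constant form $J^{(e)}$, automatically realising condition (a).

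The crux is then to specialize this rational family over $\C(t)$ to a single constant subspace $V_0 \subset \C^m$. For $d = 2$, this reduces to finding $v_1, v_2 \in \C^m$ with $A_1(v_1, v_2) = 0$ and $A_2(v_1, v_2) \neq 0$, which is guaranteed by the linear independence of $A_1$ and $A_2$ (implied by the hypothesis) together with the irreducibility of the Grassmannian $\text{Gr}(2, m)$, applied to the rational map $[\beta] \mapsto [A_1(\beta) : A_2(\beta)]$ on decomposable bivectors. For general $d = 2e$, one proceeds inductively, peeling off hyperbolic pairs for $A_2$ that are isotropic for $A_1$. The main obstacle is exactly this specialization step: when $\rank A_1$ is large relative to $m - d$, no totally $A_1$-isotropic subspace of dimension $d$ exists, so one must work with $V_0$ that is only partially isotropic for $A_1$; here the maximality of $d$ together with the structure provided by the Kronecker canonical form of the skew-symmetric pencil is essential to guarantee such a $V_0$ exists.
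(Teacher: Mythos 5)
Your Cauchy--Binet strategy is a genuinely different route from the paper's, and would in fact prove something \emph{stronger} than the statement: you ask for a constant $d$-dimensional subspace $V_0$ with $\det(M_x|_{V_0})=c\,x_2^d$, which amounts to saying that $x_2^{d/2}$ is, up to a scalar, a \emph{single} Pfaffian-compression $\mathrm{Pf}(M_x|_{V_0})$, not merely an element of $I_d(M_x)$. The paper does not attempt this. Instead, after reducing to the Kronecker normal form of the skew pencil (Theorem \ref{thm_pencils}), it splits off the $F(\infty,\cdot)$, $S(\cdot)$ and zero blocks, and proves in Proposition \ref{prop_dy_in_ideal} that the remaining finite-eigenvalue part $M_2$ satisfies $I_{f_2}(M_2)=H_{f_2}(x,y)$ — the \emph{entire} homogeneous component — via the tree of tuples (Lemma \ref{lem_distinct_tuples}) and the resultant/B\'ezout identity (Lemma \ref{lem_resultant}). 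That argument manufactures $x_2^d$ as a $\C$-linear combination of many different $d\times d$ minors spread across the blocks; no single compression $\det(M_x|_{V_0})$ is ever produced, nor is it claimed that one exists.

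The gap in your proposal is precisely the step you flag yourself: the existence of the constant $V_0$. The required condition — that $\mathrm{Pf}\left((xA_1+yA_2)|_{V_0}\right)$ be $c'x_2^{d/2}$, equivalently that the restricted pencil $tA_1|_{V_0}+A_2|_{V_0}$ be unimodular over $\C[t]$ — imposes $d/2$ vanishing conditions on the Pfaffian coefficients plus one open non-vanishing condition, and these are nonlinear constraints on the Grassmannian of $d$-planes. Your $d=2$ argument (using that $A_1,A_2$ are linearly independent and that the Pl\"ucker-embedded Grassmannian is non-degenerate) is sound, but the inductive ``peeling off hyperbolic pairs'' you sketch for general $d$ is not carried out: you concede that the totally $A_1$-isotropic structure it relies on can fail when $\rank A_1$ is large, and you assert — without proof — that the maximality of $d$ and the Kronecker form repair this. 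Until that existence claim is established (and it is not obvious, since it would strengthen the proposition to a statement about sub-Pfaffians of the pencil rather than about its determinantal ideal), the argument is incomplete. The paper's combinatorial ideal computation sidesteps this difficulty entirely, at the cost of proving a slightly weaker-looking but sufficient statement.
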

	By Lemma \ref{lem_choose_hyperplane} and Proposition \ref{prop_connection_upper_lower}, this result implies the following:
	\begin{thm} \label{thm_Im2_groups}
		Let $G_\varphi$ be an $\mathcal{I}(m,2)$-group. Then $\RF_{G_\varphi} = \log^{d(\varphi^\C)+1}$.
	\end{thm}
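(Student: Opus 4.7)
The plan is to combine the already-established upper bound with a short deduction from Proposition \ref{prop_conj_2dim}. First, Theorem \ref{thm_upper_bound} delivers $\RF_{G_\varphi} \preceq \log^{d(\varphi^\C)+1}$ for free, so the remaining work is to establish the matching lower bound $\log^{d(\varphi^\C)+1} \preceq \RF_{G_\varphi}$ in the special case $n=2$. By Proposition \ref{prop_connection_upper_lower}, this is equivalent to showing that the integer $\delta$ coincides with $d(\varphi^\C)$ in the $\mathcal{I}(m,2)$ setting, i.e.\ to exhibiting a non-zero $v \in \Z^2$ with $(v^T x)^{d} \in I_d(M_x)$ for $d := d(\varphi^\C)$.

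I would construct such a $v$ in three steps. First, applying the hyperplane reformulation of $d(\varphi^\C) = d_2(\varphi^\C) = d_3(\varphi^\C)$ given in Proposition \ref{prop_reform_hyperplane} together with Lemma \ref{lem_d2_d3}, one obtains a non-zero vector $v \in \Z^2$ such that $V(I_d(M_x)) \subset \C^2$ lies in the linear hyperplane $v_1 x_1 + v_2 x_2 = 0$, with $d$ maximal for this property. Second, Lemma \ref{lem_choose_hyperplane} supplies a linear change of variables $P$ with $Pv = e_2$ that conjugates the skew-symmetric pencil $M_x$ to a new pencil $M'_{x'}$ such that $x_2' = v^T x$; the ambient hyperplane becomes $x_2' = 0$, and the lemma records that $(v^T x)^d \in I_d(M_x)$ if and only if $(x_2')^d \in I_d(M'_{x'})$. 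Third, Proposition \ref{prop_conj_2dim} is precisely the statement that the latter membership holds, since $V(I_d(M'_{x'}))$ still lies in $\{x_2'=0\}$ and $d$ is still maximal.

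With such a $v$ in hand, Theorem \ref{thm_lower_bound} yields $\log^{d+1} \preceq \RF_{G_\varphi}$, and combined with the upper bound this proves the theorem. A minor technical subtlety I would address is that Proposition \ref{prop_conj_2dim} is phrased over $\C$, so the immediate conclusion is $(x_2')^d \in I_d(M'_{x'}) \otimes_\Z \C$; descending this to an integer witness, which is what Theorem \ref{thm_lower_bound} requires, is exactly the role of the auxiliary lemma stated just after Theorem \ref{thm_lower_bound} that transports rational/complex relations in $I_d(M_x)$ to integer ones.

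The hard part is already concealed inside Proposition \ref{prop_conj_2dim}, where the two-parameter geometry of skew-symmetric pencils is exploited to convert a containment of a variety in a hyperplane into the algebraic identity $(x_2')^d \in I_d(M'_{x'})$ with exponent matching the minor-size. Once that is accepted, the theorem under consideration is a bookkeeping exercise threading together Theorems \ref{thm_upper_bound} and \ref{thm_lower_bound}, the hyperplane reformulation of $d(\varphi^\C)$, the coordinate-normalisation lemma, and the integrality descent; no further input about the group $G_\varphi$ is required beyond what is already encoded in the ideal $I_d(M_x)$.
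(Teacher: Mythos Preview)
Your proposal is correct and follows essentially the same route as the paper's proof: obtain an integral $v$ from the hyperplane reformulation of $d(\varphi^\C)$ (Proposition \ref{prop_reform_hyperplane}/Proposition \ref{prop_connection_upper_lower}), normalise the hyperplane to $x_2'=0$ via Lemma \ref{lem_choose_hyperplane}, invoke Proposition \ref{prop_conj_2dim}, and then combine Theorems \ref{thm_upper_bound} and \ref{thm_lower_bound}. Your explicit mention of the integrality descent is handled in the paper by the blanket convention (stated after the auxiliary lemmas in Section \ref{sec_pr_lower}) that $I_d(M_x)$ may be read as $I_d(M_x)\otimes_\Z \C$ throughout Section \ref{sec_examples}.
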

	\begin{proof}
		Denote $x_1A_1 + x_2A_2$ by $M_x$, where $\varphi = (A_1,A_2)$. By Proposition \ref{prop_connection_upper_lower}, $V(I_{d(\varphi^\C)}(M_x))$ lies in a hyperplane $v_1x_1 + v_2x_2 = 0$ with $0\neq (v_1,v_2) \in \Z^2$ (and $V(I_d(M_x))$ does not lie in a hyperplane for $d>d(\varphi^\C)$). Using Lemma \ref{lem_choose_hyperplane}, we find a matrix $M'$ of the form $x_1'A_1'+x_2'A_2'$ such that $V(I_d(M'))$ lies in the hyperplane $x_2' = 0$. By Proposition \ref{prop_conj_2dim}, we know this implies that $(x_2')^d \in I_d(M')$ and therefore $(v_1x_1 + v_2x_2)^d \in I_d(M)$ by Lemma \ref{lem_choose_hyperplane}. Now Proposition \ref{prop_connection_upper_lower} (or Theorem \ref{thm_upper_bound} and Theorem \ref{thm_lower_bound}) give the desired result.
	\end{proof}
	
	Now, we will proceed to prove Proposition \ref{prop_conj_2dim}. Consider therefore a matrix $M$ of the form $xA_1 +yA_2$ with $A_1$ and $A_2$ skew-symmetric, complex $m\times m$-matrices. The proof consists of the following steps:
	\begin{enumerate}
		\item It is known that $I_d(M) = I_d(PMQ)$ for all $P,Q \in \GL(m, \C)$ (see for example \cite[Chapter 1, Theorem 2]{northcott1976finite}). Hence, we may replace $xA_1 + yA_2$ by a well-chosen congruent matrix $Q(xA_1+yA_2)Q^T$ with $Q\in \GL(m, \C)$, see Theorem \ref{thm_pencils}.
		\item Then, we calculate the largest $d\in \N$ such that $V(I_d(M))$ lies in the hyperplane $y=0$. This is done in Lemma \ref{lem_dphi_pencil}.
		\item Finally, we show that $y^d \in I_d(M)$ for this $d\in\N$. This is the content of Proposition \ref{prop_dy_in_ideal}.
	\end{enumerate}
	
	We start by introducing this normal form for $xA_1 + yA_2$ and the corresponding terminology. 
	\begin{df}
		A matrix of the form $xA_1 + yA_2$ with skew-symmetric $A_1, A_2 \in \mathbb{C}^{m\times m}$ is called a \emph{skew matrix pencil}.
	\end{df}
	\begin{df} \label{df_blocks}
		Let $k \in \N$, and let $\alpha \in \mathbb{C}$. We define the following matrices:
\begin{equation*}
	\begin{split}
		F(\infty, k) & = \begin{pmatrix}0 & \mathcal{E}_k(y,x) \\ -\mathcal{E}_k(y,x) & 0\end{pmatrix} \in \C^{2k\times 2k}, \\
		F(\alpha, k) & = \begin{pmatrix}0 & \mathcal{E}_k(x-\alpha y,y) \\ -\mathcal{E}_k(x-\alpha y,y) & 0\end{pmatrix} \in \C^{2k\times 2k}, \\
		S(k) & = \begin{pmatrix}0 & \mathcal{L}_k(x,y) \\ -(\mathcal{L}_k(x,y))^T & 0\end{pmatrix} \in \C^{(2k+1)\times (2k+1)}.
	\end{split}
	\end{equation*}
	Here, the matrices $\mathcal{E}_k(y,x)$, $\mathcal{E}_k(x-\alpha y,y)$ and $\mathcal{L}_k(x,y)$ are given by the following formulas:
	\begin{equation*}
		\mathcal{E}_k(a,b) = \begin{pmatrix}&&&&a\\&&&a&b\\&&\iddots&b&\\&a&\iddots&&\\a&b&&&\end{pmatrix} \in \mathbb{C}^{k\times k}, \quad \mathcal{L}_k(a,b) = \begin{pmatrix}a&0&\dots&0\\b&a&&0\\&\ddots&\ddots&\\0&&b&a\\0&\dots&0&b\end{pmatrix} \in \mathbb{C}^{(k+1)\times k}.
	\end{equation*}
	\end{df}
	By \cite{gauger1973classification,MR1683874}, we have the following result:
	\begin{thm} \label{thm_pencils}
		Let $xA_1 + yA_2 \in (\C[x,y])^{m\times m}$ be a skew matrix pencil. There exists a matrix $Q\in \GL(m, \C)$ such that $Q^T(xA_1 + yA_2)Q$ equals
		$$\left(\bigoplus_{i=1}^l \left( \bigoplus_{j=1}^{s_i} F(\alpha_i, k^{(i)}_j) \right) \right) \oplus \left( \bigoplus_{j=1}^{s_\infty} F(\infty, k^{(\infty)}_j) \right) \oplus \left( \bigoplus_{j=1}^{s_S} S(k^{(S)}_j) \right) \oplus 0,$$
		where $\alpha_1$ to $\alpha_l$ are distinct complex numbers, and $s_i, s_\infty, s_S \in \mathbb{N}$.
	\end{thm}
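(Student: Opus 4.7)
The plan is to appeal to the classical Kronecker canonical form for general matrix pencils and then refine the decomposition to one compatible with congruence in the skew-symmetric case. This two-stage strategy is fairly standard for normal-form results on bilinear/sesquilinear forms parametrized by a pencil.

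First, I would recall (or cite) the Kronecker canonical form: for any pencil $xA_1 + yA_2 \in (\C[x,y])^{m\times m}$, there exist $P, Q_0 \in \GL(m, \C)$ such that $P(xA_1+yA_2)Q_0$ is block-diagonal with four block types, namely Jordan blocks for finite generalized eigenvalues $\alpha \in \C$, Jordan blocks at $\infty$, singular blocks $\mathcal{L}_k(x,y)$, and their transposes. This decomposition is canonical in the sense that the list of eigenvalues (with multiplicities) and the list of minimal indices are invariants of the pencil.

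Second, I would use skew-symmetry to force a pairing of these invariants. Since $(xA_1+yA_2)^T = -(xA_1+yA_2)$, applying transposition to the Kronecker decomposition and comparing invariants shows that for every singular block $\mathcal{L}_k(x,y)$ that occurs there must also occur a transposed block $\mathcal{L}_k(x,y)^T$ of equal size, and similarly the Jordan structure at each (finite or infinite) eigenvalue must be self-dual. This motivates the block shapes in Definition \ref{df_blocks}: a pair of opposite singular blocks assembled with off-diagonal signs yields $S(k)$, and a self-paired Jordan block becomes $F(\alpha, k)$ or $F(\infty, k)$.

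Third, and this is where the main work lies, I would upgrade the left-right equivalence $(P, Q_0)$ to a single congruence $Q$ with $Q^T(xA_1+yA_2)Q$ in the desired block form. The strategy is induction on $m$: assuming no obstruction, peel off one block at a time. Concretely, pick an invariant (say, the largest singular index, or the largest Jordan block at a chosen eigenvalue $\alpha$), use a congruence to move the corresponding subspace to a standard position where it takes the block form $S(k)$, $F(\alpha, k)$, or $F(\infty, k)$, and then find a complementary subspace stable under $xA_1+yA_2$ (in the appropriate pencil sense) on which to iterate. The existence of such a complement in the skew setting is the classical \emph{Witt-type} extension argument and can be read off from the references \cite{gauger1973classification, MR1683874}.

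The main obstacle is precisely this last step: passing from left-right equivalence, which the Kronecker form gives for free, to congruence, which is what is needed to preserve the skew-symmetric structure. Equivalently, one must show that the congruence orbit of a skew pencil is determined by exactly the same Kronecker invariants as the left-right orbit (no finer congruence invariants appear in the skew case), and that each orbit contains a representative of the stated block form. Since this is the content of the cited classification results, in practice I would simply invoke \cite{gauger1973classification, MR1683874} rather than reconstruct the proof in detail; the value of the three-step sketch above is to make transparent why the listed blocks are exactly the ones forced by skew-symmetry on top of the Kronecker data.
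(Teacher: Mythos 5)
Your proposal is correct and, at bottom, takes the same route as the paper: the paper states this theorem as a quotation from \cite{gauger1973classification,MR1683874} without reproducing a proof, and you likewise conclude by invoking those references. Your three-step sketch (Kronecker form, forced pairing of invariants under skew-symmetry, then a Witt-type upgrade from left-right equivalence to congruence) is a reasonable and accurate account of what those references do, but since the paper itself offers no argument beyond the citation, nothing more is required of you here.
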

	The notation $A\oplus B$ is used for the block diagonal matrix
	$$ A\oplus B := \begin{pmatrix}A&0\\0&B\end{pmatrix}.$$
	The result therefore says that up to congruence the skew matrix pencil can be written as a block diagonal matrix with blocks as in Definition \ref{df_blocks}. If $\alpha_i$ appears in one of the blocks, then we denote $s_i$ for the number of blocks with this number. The numbers $k^{(i)}_j$ represent the sizes of those blocks. The size will be $(2k^{(i)}_j)\times (2k^{(i)}_j)$.
	
	Now, given the normal form, we will proceed to calculate the largest $d\in \N$ such that $V(I_d(M))$ lies in the hyperplane $y=0$. For this, we will need information about how the ideals generated by minors look like for matrices $\mathcal{E}_k(y,x)$, $\mathcal{E}_k(x-\alpha y,y)$ (with $\alpha \in \C$) and $S(k)$.
	\begin{nota}
		Denote the ideal generated by the homogeneous polynomials of degree $k$ in two variables $x$ and $y$ by $H_k(x,y)$, i.e. $H_k(x,y) = \left(x^i y^{k-i}\mid 0\leq i \leq k\right)$. 
	\end{nota}
	\begin{lemma} \label{lem_pencil_minors}
		The following holds:
		\begin{equation*}
			\begin{split}
			I_{k-1}(\mathcal{E}_k(y,x)) & = H_{k-1}(x,y), \\
			I_{k}(\mathcal{E}_k(y,x)) & = \left( y^k \right), \\
			I_{k-1}(\mathcal{E}_k(x-\alpha y,y)) & = H_{k-1}(x,y), \\
			I_{k}(\mathcal{E}_k(x-\alpha y,y)) & = \left( (x-\alpha y)^k \right), \\
			I_{2k}(S(k)) & = H_{2k}(x,y), \\
			I_{2k+1}(S(k)) & = \left( 0 \right).
			\end{split}
		\end{equation*}
	\end{lemma}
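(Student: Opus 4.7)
The proof splits into direct determinant computations for the three matrix families in the lemma. A uniform preliminary observation is that every nonzero entry of $\mathcal{E}_k(\cdot,\cdot)$ and $S(k)$ is a linear form in $x,y$, hence every $d\times d$ minor is either zero or homogeneous of degree $d$; this gives $I_d\subseteq H_d$ for free in all equality claims. For $\mathcal{E}_k(a,b)$, I would first show $\det \mathcal{E}_k(a,b) = \pm a^k$ by observing that in the Leibniz expansion, the unique permutation $\sigma$ with every entry $(\mathcal{E}_k(a,b))_{i,\sigma(i)}$ nonzero is the anti-diagonal permutation $\sigma(i)=k+1-i$: any attempt to use the $b$-entry at position $(i,k+2-i)$ starts a cascading chain of forced choices that never closes. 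Specialising $(a,b)=(y,x)$ and $(a,b)=(x-\alpha y, y)$ then gives the two top-dimensional claims. For the $(k-1)\times(k-1)$ minors, the key computation is the minor obtained by deleting row $1$ and column $c$, for each $c\in\{1,\ldots,k\}$. I would show that this submatrix decomposes as a direct sum of a $(k-c)\times(k-c)$ block (rows $1,\ldots,k-c$ together with columns $c,\ldots,k-1$) and a $(c-1)\times(c-1)$ block (rows $k-c+1,\ldots,k-1$ together with columns $1,\ldots,c-1$), both off-diagonal blocks being zero. The first block again admits a unique valid permutation and contributes $\pm b^{k-c}$, while the second block is exactly $\mathcal{E}_{c-1}(a,b)$ with determinant $\pm a^{c-1}$. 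The resulting minor is $\pm a^{c-1} b^{k-c}$, and varying $c$ produces every monomial in $H_{k-1}(a,b)$. Since both $\{y,x\}$ and $\{x-\alpha y,y\}$ span the linear forms in $x,y$, the first four identities of the lemma follow.

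For $S(k)$, the vanishing $I_{2k+1}(S(k))=(0)$ is immediate because a skew-symmetric matrix of odd order has determinant zero. For $I_{2k}(S(k))$, I would classify the $2k\times 2k$ minors by the position of the deleted row and column. A Laplace expansion along the remaining top rows shows that any minor deleting a row or column of index $\geq k+2$ vanishes: the top rows of $S(k)$ are nonzero only in the $k$ columns $\{k+2,\ldots,2k+1\}$, so after deleting such a column there are only $k-1$ columns in which the top rows can contribute, forcing every $k\times k$ submatrix of the top rows to have a zero column. When the deleted indices $i,j$ both lie in $\{1,\ldots,k+1\}$, the submatrix has the block anti-diagonal form $\begin{psmallmatrix} 0 & A_i \\ -A_j^T & 0 \end{psmallmatrix}$, where $A_r$ denotes $\mathcal{L}_k(x,y)$ with row $r$ removed. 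The identity $\det\begin{psmallmatrix} 0 & A \\ B & 0 \end{psmallmatrix} = (-1)^k \det(A)\det(B)$ for $k\times k$ blocks, combined with the calculation $\det A_r = x^{r-1}y^{k-r+1}$ (justified because $\mathcal{L}_k$ is lower bidiagonal, so $A_r$ is block-diagonal with an $(r-1)\times(r-1)$ lower-triangular block with $x$ on the diagonal and a $(k-r+1)\times(k-r+1)$ upper-triangular block with $y$ on the diagonal), produces a minor $\pm x^{i+j-2}y^{2k-i-j+2}$. As $i+j$ ranges over $\{2,\ldots,2k+2\}$, these minors exhaust every monomial of $H_{2k}(x,y)$, giving the last identity.

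The main bookkeeping challenge is the block decomposition for the $(k-1)\times(k-1)$ minors of $\mathcal{E}_k(a,b)$: one must verify that the partition of the remaining rows into those supported on columns $\geq c$ and those supported on columns $< c$ is exact after deleting row $1$ and column $c$, and carefully identify each resulting block as a triangular or anti-triangular matrix. Once this bookkeeping is in place, every determinant in the proof reduces to a product of triangular or previously-known anti-triangular determinants, so the remaining computations are routine.
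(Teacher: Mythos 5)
Your proof is correct and matches the authors' intended argument: you compute the same family of minors (deleting one row and one column, using the symmetry of $\mathcal{E}_k$ so that your row-$1$/column-$c$ deletion agrees with their row-$(i+1)$/column-$1$ deletion), identify the same block structures, and obtain the same monomials $a^{i}b^{k-1-i}$ and $x^{i+j-2}y^{2k-i-j+2}$. The only cosmetic difference is that you conclude $I_{k-1}(\mathcal{E}_k(x-\alpha y,y))=H_{k-1}(x,y)$ by noting that $\{x-\alpha y,y\}$ is a basis of linear forms, rather than expanding $(x-\alpha y)^{i}y^{k-1-i}$ binomially, and you front-load the inclusion $I_d\subseteq H_d$ via the homogeneity observation; both are equivalent to what the paper does.
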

	\begin{proof}
		This proof follows from straightforward calculations. 
	\end{proof}
	\begin{rem}
		Suppose $tA_1 + A_2$ with $t\in \C$ is given. The rank of this matrix is at least $d$ if and only if there is a non-zero $d\times d$-minor. Consider $I_d(tA_1+A_2)$ this is an ideal over the PID $\C[t]$, hence $I_d(tA_1+A_2) = \left(q(t)\right)$ for some polynomial $q(t)$. If $tA_1 + A_2$ has rank at least $d$ for all $t\in \C$, then $q(t)$ can have no roots, i.e. $I_d(tA_1+A_2) = \left(1\right)$. The converse also holds. We have
		\begin{equation} \label{eq_tA1A_2}
		\forall t\in \C: \rank(tA_1+A_2) \geq d \Leftrightarrow I_d(tA_1+A_2) = \left(1\right).
		\end{equation}
		
		Now suppose $xA_1 + yA_2$ is given with $y\neq 0$, then we can set $t= x/y$ and $\rank(xA_1 + yA_2) = \rank((1/y)(xA_1+yA_2)) = \rank(tA_1+A_2)$. Therefore, Equation \eqref{eq_tA1A_2} implies 
		\begin{equation} \label{eq_xA1yA_2}
			\begin{split}
			\forall (x,y)\in \C\times\C_0: \rank(xA_1+yA_2) \geq d & \Leftrightarrow \exists l\in \N^\ast: I_d(xA_1+yA_2) = \left(y^l\right) \\ & \Leftrightarrow V(I_d(xA_1+yA_2)) \subset \{(x,y)\in \C^2\mid y = 0\}.
			\end{split}
		\end{equation}
	\end{rem}
	The condition of Proposition \ref{prop_conj_2dim} requires us to find the largest $d\in \N$ such that Equation \eqref{eq_xA1yA_2} holds. Equivalently, by Equation \eqref{eq_tA1A_2}, we need to find the largest $d\in\N$ such that $I_d(tA_1+A_2) = \left(1\right)$. For this, we will suppose that $xA_1+yA_2$ has the normal form of Theorem \ref{thm_pencils}, and we will use Lemma \ref{lem_pencil_minors} above to facilitate the calculations. The result is given in Lemma \ref{lem_dphi_pencil}, and is based on \cite[Lemma 2.4.7]{haslinger2001families}. To prove this lemma, we first need the following result, which is of a combinatorial nature.
	\begin{lemma} \label{lem_distinct_tuples}
		Let $V$ be a set consisting of $n$ distinct elements. Suppose they are divided in $l$ different subsets, named $\beta_1$ to $\beta_l$, with sizes $1\leq n_1 \leq n_2 \leq \ldots \leq n_{l}$ respectively. If $d \leq n - n_{l}$, then we can construct $2^d$ ordered tuples of length $d$, such that
		\begin{enumerate}
			\item each tuple consists of $d$ distinct elements,
			\item and for every tuple $T$ and every position $1\leq m \leq d$, we can find another tuple $T'$ such that the first $m-1$ entries agree and the $m$'th entry of $T'$ is an element of another subset.
		\end{enumerate}
	\end{lemma}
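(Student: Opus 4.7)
The plan is to prove this by induction on $d$. It is convenient to think of the $2^d$ desired tuples as the leaves of a labelled complete binary tree of depth $d$: each non-root node carries a label in $V$, and the tuple associated to a leaf is obtained by reading off the labels along the root-to-leaf path. In this language, condition (1) says that along every such path the $d$ labels are pairwise distinct, and condition (2) requires that every pair of sibling nodes carry labels lying in different subsets $\beta_i$.

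The base case $d=0$ is vacuous. For the inductive step, the hypothesis $d \leq n - n_l$ together with $d \geq 1$ forces $l \geq 2$, so we may pick labels $v_0 \in \beta_l$ and $v_1 \in \beta_1$ for the two children of the root; these automatically lie in distinct subsets, handling condition (2) at position $m=1$. We then recursively build the two depth-$(d-1)$ subtrees on the ground sets $V \setminus \{v_0\}$ and $V \setminus \{v_1\}$, each equipped with the induced partition (dropping any subset that becomes empty), and concatenate $v_{\epsilon_1}$ to the front of each tuple produced in the corresponding subproblem. Distinctness within any resulting tuple is immediate from the recursion, since the removed element never appears in its own subtree; condition (2) at positions $m \geq 2$ follows directly from the inductive hypothesis applied to the relevant subtree.

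The main point to verify is that the inequality required by the inductive hypothesis still holds in both subproblems, namely $d-1 \leq n' - n'_{\max}$ where $n'$ and $n'_{\max}$ denote the new total size and the new largest-subset size. If we remove $v_0 \in \beta_l$, then $n'_{\max} \leq n_l$, so $n' - n'_{\max} \geq n - 1 - n_l \geq d-1$; and if we remove $v_1 \in \beta_1$, the largest subset still has size $n_l$ (using $l \geq 2$), giving $n' - n'_{\max} = n - 1 - n_l \geq d-1$. This is precisely why the construction needs one label from the largest subset and one from any other subset, and it is essentially the only nontrivial verification in the argument.
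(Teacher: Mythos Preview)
Your proof is correct and follows the same binary-tree framework as the paper: both arguments label a complete binary tree of depth $d$ so that siblings lie in different subsets and root-to-leaf paths are repetition-free.

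The implementations differ slightly. The paper fixes a global ordering on $V$ and, at each node, greedily picks the smallest unused element and then the smallest unused element from a different subset; existence of the second child is verified by the count $k + n_l < n$ at depth $k$. You instead argue by induction on $d$, deliberately placing one root-child in the largest subset $\beta_l$ and the other in $\beta_1$, so that the invariant $d-1 \le n' - n'_{\max}$ is preserved in both subproblems. Your choice of $\beta_l$ at the root is exactly what makes the induction go through, and this is a clean substitute for the paper's counting argument; either approach works, and yours is arguably more transparent about why the hypothesis $d \le n - n_l$ is the right one.
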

	\begin{proof}
		We will first endow $V$ with an ordering. For this, give an order to the $n_i$ elements of subset $\beta_i$. In this way, we can represent all $n$ elements by a vector $(i,j)$, where $i$ is the number of the subset and $j$ is the number of the element in the group ($1\leq i \leq l$ and $1\leq j \leq n_i$). Now use the lexicographic order with $(i,j) \leq (i',j')$ if $i < i'$ or $i = i'$ and $j \leq j'$, i.e.
		$$(1,1) < \ldots < (1,n_1) < (2,1) < \ldots < (2, n_2) < (3,1) < \ldots $$
		
		We construct a (directed) binary rooted tree with labels of depth $d$. The root is unlabeled. Every branch node has two children: the first child is the smallest element that does not appear as one of its ancestors, and the second child is the smallest element that does not appear as one of its ancestors and that comes from a different subset than the first child. Now, every tuple corresponds to a path from the root of the tree to a leaf, see Figure \ref{fig_tuples} for an example. 

	The existence of the tree is guaranteed by the following observation: Suppose we fixed the first $k<d\leq n - n_l$ entries of the tuples/labels of the tree.  For the first child, we still have elements left to choose from, since $k$ does not exceed the total number of elements. For the second child, we cannot choose the $k$ preceding elements and the elements of the subset of the first child. Together, that gives at most $k+ n_l < n$ forbidden elements. Hence, there is at least one element left to pick.  
	\end{proof}
	\begin{figure} \label{fig_tuples}
		\begin{center}
		\begin{tikzpicture}[level distance=1cm,
			level 1/.style={sibling distance=4cm},
			level 2/.style={sibling distance=2cm},
			level 3/.style={sibling distance=1cm}]
			\node {$\ast$}[grow'=down]
			child {node {$(2,1)$}
				child {node {$(2,2)$}
					child {node {$(3,1)$}}
					child {node {$(1,1)$}}}
				child {node {$(1,1)$}
					child {node {$(3,1)$}}
					child {node {$(2,2)$}}}}
			child {node {$(1,1)$}
				child {node {$(3,1)$}
					child {node {$(3,2)$}}
					child {node {$(2,1)$}}}
				child {node {$(2,1)$}
					child {node {$(3,1)$}}
					child {node {$(2,2)$}}}};
		\end{tikzpicture}
	\end{center}
		\caption{Example illustrating the construction of the tuples of Lemma \ref{lem_distinct_tuples}. There is one subset of one element and two subsets with two elements, hence the elements can be represented as the set $\{(1,1); (2,1); (2,2); (3,1); (3,2)\}$, yielding the tree above. Now, the first tuple of this tree is $\{(1,1); (2,1); (2,2)\}$. It has the first two entries in common with the second tuple $\{(1,1); (2,1); (3,1)\}$, but the last entry comes from a different subset.}
	\end{figure}
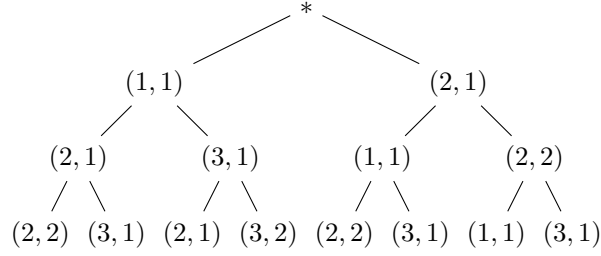
	\begin{df} \label{df_dy}
		Let $M = xA_1+yA_2$ be a skew matrix pencil of the form
		$$\left(\bigoplus_{i=1}^l \left( \bigoplus_{j=1}^{s_i} F(\alpha_i, k^{(i)}_j) \right) \right) \oplus \left( \bigoplus_{j=1}^{s_\infty} F(\infty, k^{(\infty)}_j) \right) \oplus \left( \bigoplus_{j=1}^{s_S} S(k^{(S)}_j) \right) \oplus 0,$$
		where $\alpha_1$ to $\alpha_l$ are distinct complex numbers, and $s_i, s_\infty, s_S \in \mathbb{N}$. Define the following number:
		$$d_y(xA_1+yA_2) := 2\left(f + \sum_{i=1}^l s_i - \max\{s_i\mid 1\leq i \leq l\}\right)$$
		with 
		$$f = \left(\sum_{i=1}^l \left( \sum_{j=1}^{s_i} (k^{(i)}_j - 1) \right) \right) + \left( \sum_{j=1}^{s_\infty} k^{(\infty)}_j \right) + \left( \sum_{j=1}^{s_S} k^{(S)}_j \right).$$
	\end{df}
	\begin{lemma} \label{lem_dphi_pencil}
		Given the situation of Definition \ref{df_dy}: If $y\neq 0$, then for all $x\in \mathbb{C}$, the rank of $M$ cannot be smaller than $d_y(xA_1+yA_2)$. In other words, the largest number $d\in \N$ such that $V(I_d(M))$ lies in the hyperplane $y=0$ equals $d_y(xA_1+yA_2)$.
	\end{lemma}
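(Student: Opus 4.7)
\medskip

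\noindent\textbf{Proof proposal for Lemma \ref{lem_dphi_pencil}.}
The plan is to exploit the block-diagonal structure of the normal form from Theorem \ref{thm_pencils}, which reduces the problem to analyzing the rank of each individual block $F(\alpha_i,k)$, $F(\infty,k)$ and $S(k)$ as a function of $(x,y)$ with $y\neq 0$, and then summing.

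First I would unpack Definition \ref{df_blocks} and use Lemma \ref{lem_pencil_minors} (or a direct inspection of the anti-diagonal/sub-diagonal patterns) to compute the rank of each block type under the assumption $y\neq 0$:
\begin{itemize}
\item The block $F(\infty,k)$ has $\mathcal{E}_k(y,x)$ as its off-diagonal part; since $y\neq 0$, the anti-diagonal is non-zero and $\mathcal{E}_k(y,x)$ is invertible, so $\rank F(\infty,k)=2k$ for every $x$.
\item The block $S(k)$ involves $\mathcal{L}_k(x,y)$, which (with $y\neq 0$) has full column rank $k$ because the subdiagonal $y$-entries form a non-singular $k\times k$ submatrix; hence $\rank S(k)=2k$ for every $x$.
\item The block $F(\alpha_i,k)$ involves $\mathcal{E}_k(x-\alpha_i y,y)$. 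If $x\neq \alpha_i y$ this matrix is invertible, contributing $2k$; if $x=\alpha_i y$ the anti-diagonal vanishes but the entries just above it (equal to $y\neq 0$) still form a non-singular $(k-1)\times(k-1)$ submatrix, giving rank $k-1$, so the block contributes $2(k-1)$.
\end{itemize}

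Adding these contributions, for any fixed $(x,y)$ with $y\neq 0$ the total rank of $M$ equals the ``generic'' value $2\bigl(\sum_{i,j}k^{(i)}_j+\sum_j k^{(\infty)}_j+\sum_j k^{(S)}_j\bigr)$ minus $2s_i$ whenever $x=\alpha_i y$ (the number $s_i$ of $F(\alpha_i,\cdot)$-blocks each losing rank $2$). Since the $\alpha_i$ are distinct, at most one index $i$ can satisfy $x=\alpha_i y$ simultaneously. The minimum rank over all $(x,y)$ with $y\neq 0$ is therefore achieved by choosing $x/y=\alpha_{i_0}$ where $s_{i_0}=\max_i s_i$, and the resulting value is exactly
\[
2\Bigl(\sum_{i}\sum_j k^{(i)}_j+\sum_j k^{(\infty)}_j+\sum_j k^{(S)}_j\Bigr)-2\max_i s_i
= 2\bigl(f+\textstyle\sum_i s_i-\max_i s_i\bigr)=d_y(xA_1+yA_2),
\]
after rewriting $\sum_{i,j}k^{(i)}_j=\sum_{i,j}(k^{(i)}_j-1)+\sum_i s_i$. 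This proves the rank lower bound for every $(x,y)$ with $y\neq 0$.

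For the ``in other words'' reformulation, Equation \eqref{eq_xA1yA_2} tells us that the largest $d$ with $V(I_d(M))\subset\{y=0\}$ is precisely the minimum of $\rank(xA_1+yA_2)$ over $(x,y)$ with $y\neq 0$. The computation above shows this minimum equals $d_y(xA_1+yA_2)$ (attained at $x/y=\alpha_{i_0}$), so the equivalence is immediate. The main obstacle is really just the rank computation of the block $F(\alpha_i,k)$ at the critical value $x=\alpha_i y$: one must recognize that although the anti-diagonal $(x-\alpha_i y)$-entries vanish, the adjacent $y$-entries still guarantee rank $k-1$, and that no further rank drop can occur for any other choice of $x$. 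Everything else is bookkeeping of the block sizes against the definition of $d_y$.
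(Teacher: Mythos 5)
Your proof is correct, and it takes a genuinely more elementary route than the paper's. The paper dehomogenizes to $M' = tA_1 + A_2$ over the PID $\C[t]$ and computes the Smith Normal Form of each summand, reducing the question to which $d$ satisfies $I_d(M') = (1)$; to establish $I_{d_y}(M') = (1)$ it then invokes the combinatorial tuple construction of Lemma \ref{lem_distinct_tuples}. You instead compute $\rank M(x,y)$ pointwise for each block type with $y\neq 0$ — the invertibility of $\mathcal{E}_k(y,x)$, the full column rank of $\mathcal{L}_k(x,y)$ via its lower $y$-triangular part, and the drop of $\mathcal{E}_k(x-\alpha_i y,y)$ from rank $k$ to rank $k-1$ exactly at $x = \alpha_i y$ (the $y$-anti-diagonal of the minor still survives) — and then observe that since the $\alpha_i$ are distinct, the rank defect of $2s_i$ occurs at only one $i$ at a time, so the global minimum over $y\neq 0$ is attained at $x/y = \alpha_{i_0}$ with $s_{i_0}$ maximal; the bookkeeping against Definition \ref{df_dy} then closes the argument, and the translation to the ``largest $d$ with $V(I_d(M))\subset\{y=0\}$'' statement is just Equation \eqref{eq_xA1yA_2}. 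Your route avoids Smith Normal Form and Lemma \ref{lem_distinct_tuples} entirely, which for this particular lemma makes the argument lighter. The trade-off is that the paper's Smith-Normal-Form/ideal framework and the tuple combinatorics are reused in the proof of Proposition \ref{prop_dy_in_ideal}, where one genuinely needs to control the ideal $I_{d_y}(M)$ (not just its variety), so setting it up here is an investment rather than pure overhead.
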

	\begin{proof}
		Since $y$ is not zero, we can divide the pencil $M = xA_1 + yA_2$ by $y$ and set $t := x/y$ to obtain $M' = tA_1 + A_2$. In terms of the sum decomposition of $M$, this runs down to replacing $x$ by $t$ and $y$ by $1$ in all summands provided in Definition \ref{df_blocks}. A summand coming from $F(\alpha,k)$ will be denoted by $F'(\alpha,k)$ and so on. Now, we can study $M'$ in the PID $\C[t]$. By Equations \eqref{eq_tA1A_2}-\eqref{eq_xA1yA_2}, the number $d_y(xA_1+yA_2)$ is the largest number $d$ such that $I_d(M') = \left(1\right)$.
		
		 Now, we will proceed to reduce each summand of $M'$ to its Smith Normal Form over $\C[t]$. Suppose first a summand $S'(k)$ is given. By Lemma \ref{lem_pencil_minors}, we know that $I_{2k}(S(k)) = H_{2k}(x,y)$ and $I_{2k+1}(S(k)) = \left(0\right)$. Therefore, $I_{2k}(S'(k)) = \left(1\right)$ and $I_{2k+1}(S'(k)) = \left(0\right)$. Its Smith Normal Form is thus given by $\mathbb{1}_{2k} \oplus 0$.
		 
		 Suppose now a summand $F'$ coming from $F(\alpha,k)$ or $F(\infty,k)$ is given. This summand is of the form 
		 $$\begin{pmatrix}0& \mathcal{E}_k'\\ -\mathcal{E}_k'&0\end{pmatrix}.$$
		 Obviously, this matrix has the same Smith Normal Form as
		 $$\begin{pmatrix}\mathcal{E}_k'&0\\ 0&\mathcal{E}_k'\end{pmatrix}.$$
		 The Smith Normal Form of $\mathcal{E}'_k$ can be easily found. Indeed, similar to the arguments from the previous case, by Lemma \ref{lem_pencil_minors}, we know that $I_{k-1}(\mathcal{E}'_k) = \left(1\right)$, so its $k-1$ first invariant factors are $1$. Its last invariant factor depends on the type of summand: $(t-\alpha )^k$ for $F'(\alpha, k)$ and $1$ for $F'(\infty,k)$.
		 
		 Above, we have argued that that matrix $M'$ is similar over $\C[t]$ to a diagonal matrix with the following number of ones:
		 $$2f = \left(\sum_{i=1}^l \left( \sum_{j=1}^{s_i} 2(k^{(i)}_j - 1) \right) \right) + \left( \sum_{j=1}^{s_\infty} 2k^{(\infty)}_j \right) + \left( \sum_{j=1}^{s_S} 2k^{(S)}_j \right).$$
		 The number $\sum_{i=1}^l 2s_i$
		 represents the number of diagonal elements which are non-zero and non-constant functions of $t$. Those functions have the form $(t-\alpha )^k$. They are related to one of the $\alpha_i$'s. Let $\beta\in \{\alpha_i, \infty\mid 1\leq i \leq l\}$ denote the type of non-zero, non-constant entry which appears most. Now, the number $d_y(M')$ is the number of diagonal entries without the number of zeroes and without the number of appearances of a type $\beta$ entry, $2s_\beta$. 
		 
		 We will first argue that if $d > d_y(M')$, $I_d(M') \neq \left(1\right)$. We need to consider all $d\times d$ minors. Note that we may restrict our attention to principal minors, as all other minors of a diagonal matrix are zero. In essence, a $d\times d$ principal minor is precisely choosing $d$ diagonal entries. However, by the meaning of $d_y(M')$, choosing $d$ diagonal entries would imply always choosing at least one zero entry or one entry of type $\beta$. Hence, the greatest common divisor of all these minors must divide  $(t-\beta)$. In particular, $I_d(M') \neq \left(1\right)$.
		 
		 Now we end the proof by arguing that $I_{d_y(M')} = \left(1\right)$. Note that this ideal is generated by all principal minors of the given size. If we pick at least all the ones for our principal minor, and if we exclude any zeroes from it, then we must pick another 
		 $$d = \sum_{i=1}^l 2s_i - 2s_\beta$$
		 non-zero, non-constant entries. These entries can be divided into subsets according to the type of block it corresponds to. Here, $\beta$ represents the largest subset. This setting lends itself to using Lemma \ref{lem_distinct_tuples} above. Each tuple $T$ now corresponds to a minor $$M(T) := \prod_{i=1}^{\text{length of }T} q_{T(i)}(t),$$ where $q_{T(i)}$ is a polynomial of the type corresponding to the group of $T(i)$, e.g. $(t-\alpha )^r$. The power $r$ of the polynomial depends on the specific entry $T(i)$.
		 
		 Take two tuples $T_1$ and $T_2$ which agree on the first $d-1$ entries, say this part is a tuple $T'$ of length $d-1$, but have entries of a different group on the last position, say $(t-\alpha )^{r_1}$ and $(t- \alpha' )^{r_2}$ ($\alpha \neq \alpha'$). Then, we see that 
		 $$\gcd(M(T_1), M(T_2)) = M(T')\gcd((t-\alpha )^{r_1},(t- \alpha' )^{r_2}) = M(T'),$$
		 since the polynomials of different groups have no common zero.
		 
		 We can do this for every pair of tuples as above. We are left with $2^d/2$ tuples of length $d-1$ with the same defining properties as given in Lemma \ref{lem_distinct_tuples}. Hence, we can continue this procedure to show that the greatest common divisor all $2^d$ original tuples was in fact $1$. Therefore, $I_{d_y(M')} = \left(1\right)$. This ends the proof. 
	\end{proof}
	
	Now we must show that $y^{d} \in I_{d}(xA_1+yA_2)$ if $d = d_y(xA_1+yA_2)$. We first need a crucial lemma.
	\begin{lemma} \label{lem_resultant}
		Let $r_1,r_2 \in \mathbb{N}$, $\alpha \neq \alpha' \in \C$. The following statement about ideals in $\C[x,y]$ holds:
		$$\left((x-\alpha y)^{r_1}\right)\cdot H_{r_2-1}(x,y) + \left((x-\alpha' y)^{r_2}\right)\cdot H_{r_1-1}(x,y)  = H_{r_1+r_2-1}(x,y).$$
	\end{lemma}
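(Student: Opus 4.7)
The inclusion $\subset$ is immediate: each generator on the left-hand side is a product of a homogeneous polynomial of degree $r_1$ (or $r_2$) with one of degree $r_2-1$ (or $r_1-1$), hence lies in $H_{r_1+r_2-1}(x,y)$. All the content is in the reverse inclusion, which I will reduce to a one-variable Chinese-remainder statement.

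The plan is to dehomogenize by setting $t=x/y$. Every element of $H_k(x,y)$ is of the form $y^k \tilde p(x/y)$ with $\deg \tilde p \le k$, and this gives a bijection between homogeneous polynomials of degree $k$ in $\C[x,y]$ and polynomials of degree at most $k$ in $\C[t]$. Under this correspondence, the desired reverse inclusion becomes the claim that every $p(t)\in\C[t]$ with $\deg p \le r_1+r_2-1$ admits a decomposition
\begin{equation*}
    p(t) = (t-\alpha)^{r_1}\,q_1(t) + (t-\alpha')^{r_2}\,q_2(t), \qquad \deg q_1 \le r_2-1, \ \deg q_2 \le r_1-1.
\end{equation*}

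To establish this, I would consider the linear map
\begin{equation*}
    \Phi\colon \C[t]_{\le r_2-1} \oplus \C[t]_{\le r_1-1} \to \C[t]_{\le r_1+r_2-1},\qquad (q_1,q_2)\mapsto (t-\alpha)^{r_1}q_1 + (t-\alpha')^{r_2}q_2.
\end{equation*}
Both sides are $\C$-vector spaces of dimension $r_1+r_2$, so it suffices to check injectivity. If $\Phi(q_1,q_2)=0$, then $(t-\alpha)^{r_1}$ divides $(t-\alpha')^{r_2}q_2$; since $\alpha\neq\alpha'$ the two factors are coprime, so $(t-\alpha)^{r_1}\mid q_2$, which combined with $\deg q_2 \le r_1-1$ forces $q_2=0$ and then $q_1=0$. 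Hence $\Phi$ is an isomorphism.

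Finally, I would homogenize back: multiplying the decomposition by $y^{r_1+r_2-1}$ and substituting $t=x/y$ converts $q_1(t)$ and $q_2(t)$ into elements of $H_{r_2-1}(x,y)$ and $H_{r_1-1}(x,y)$ respectively, yielding the desired expression for an arbitrary generator $x^i y^{r_1+r_2-1-i}$ of $H_{r_1+r_2-1}(x,y)$. No step looks genuinely hard; the only care needed is in the bookkeeping between homogeneous polynomials in $(x,y)$ and polynomials in $t$, so that the degree bounds on $q_1,q_2$ match the generators of $H_{r_2-1}$ and $H_{r_1-1}$ exactly.
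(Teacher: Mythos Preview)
Your argument is correct and follows essentially the same route as the paper: dehomogenize via $t=x/y$, show that the $\C$-linear map $(q_1,q_2)\mapsto (t-\alpha)^{r_1}q_1+(t-\alpha')^{r_2}q_2$ between spaces of equal dimension $r_1+r_2$ is an isomorphism, and then rehomogenize. The only cosmetic difference is that the paper phrases the isomorphism step via the Sylvester matrix and the nonvanishing of the resultant $\Res\bigl((t-\alpha)^{r_1},(t-\alpha')^{r_2}\bigr)$, whereas you prove injectivity directly from coprimality; the content is identical.
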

	\begin{proof}
		Recall the notion of the resultant $\Res(q_1,q_2)$ of two polynomials $q_1(t) = \sum_{i=1}^n a_it^i$ and $q_2(t) = \sum_{i=1}^m b_it^i$ in $\C[t]$: this is the determinant of the matrix below.
		$$S(q_1,q_2) = \begin{pmatrix}a_n &&&b_m&& \\ a_{n-1}& a_n & & b_{m-1} & b_m &  \\ \vdots & \vdots & \ddots & \vdots & \vdots & \ddots \\ a_0 & a_1 & \dots & b_0 & b_1 & \dots \\ & a_0 & \dots &&b_0 & \dots \\&&\ddots&&&\ddots \end{pmatrix} $$
		The columns containing the coefficients of $q_1$ span $m$ columns, i.e. the degree of $q_2$, and the columns containing the coefficients of $q_2$ span $n$ columns, i.e. the degree of $q_1$.
		
		It is a known fact that $\Res(q_1,q_2) := \det(S(q_1,q_2)) \neq 0$ if and only if $q_1$ and $q_2$ have no common root in $\C$. If $q_1$ and $q_2$ have no common root, this fact also implies that $S(q_1,q_2)$ is a full rank matrix. Hence, for every vector $v\in \C^{m+n}$, we find a vector $(c, d) \in \C^{m+n}$ (with $c\in\C^m$ and $d\in \C^n$) such that
		$$v = S(q_1,q_2)\begin{pmatrix}c\\d\end{pmatrix}. $$
		This corresponds to the polynomial identity $v(t) = c(t)q_1(t) + d(t)q_2(t)$ with $v(t) = \sum_{j=1}^{n+m} v_j t^{n+m-j}$, $c(t) = \sum_{j=1}^{m} c_jt^{m-j}$ and $d(t) = \sum_{j=1}^n d_jt^{n-j}$. We have therefore made the observation that if $\Res(q_1,q_2) \neq 0$,
		\begin{equation} \label{eq_bezout}
		q_1(t) \cdot H_{m-1}(t) + q_2(t) \cdot H_{n-1}(t) = H_{n+m-1}(t),
		\end{equation}
		where $H_k(t)$ represents the vector space of polynomials of degree at most $k$.
		
		Set $t = x/y$. Now remark that $\Res((t-\alpha)^{r_1},(t-\alpha' )^{r_2}) \neq 0$, since they have no common root. If we homogenize Equation \eqref{eq_bezout}, we obtain the lemma's statement.
	\end{proof}
	\begin{prop} \label{prop_dy_in_ideal}
		In a skew matrix pencil $xA_1+ yA_2$, we have 
		$$y^{d_y(xA_1+yA_2)} \in I_{d_y(xA_1+yA_2)}(xA_1+yA_2).$$
	\end{prop}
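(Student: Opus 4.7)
The plan is to pass to the block-diagonal normal form of Theorem \ref{thm_pencils} (which is valid since congruence $M \mapsto Q M Q^T$ preserves each $I_d$) and then to expand $I_{d_y}(M)$ via the standard identity $I_d(A \oplus B) = \sum_{d_1+d_2=d} I_{d_1}(A) \cdot I_{d_2}(B)$ applied iteratively across all summands. This reduces matters to exhibiting $y^{d_y}$ as a $\C$-linear combination of products $\prod_s q_s$, where each $q_s$ is a minor of the $s$-th block and the sizes of the $q_s$ sum to $d_y$.

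I would first record the full list of block-minor ideals. Lemma \ref{lem_pencil_minors} handles the top two sizes of $\mathcal{E}_k$; a short direct inspection of sub-determinants extends it to $I_j(\mathcal{E}_k(a,b)) = H_j(a,b)$ for all $j < k$ and to $I_j(\mathcal{L}_k(x,y)) = H_j(x,y)$ for all $j \leq k$. Combined with the block-antidiagonal structure of $F$ and $S$, this yields the explicit formula
$$I_d(F(\alpha_i,k)) = \begin{cases} H_d(x,y) & d \leq 2k-2,\\ (x-\alpha_i y)^k\, H_{k-1}(x,y) & d = 2k-1,\\ ((x-\alpha_i y)^{2k}) & d = 2k, \end{cases}$$
analogously for $F(\infty,k)$ with $y$ in place of $x-\alpha_i y$, and $I_d(S(k)) = H_d(x,y)$ for $d \leq 2k$.

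Next I would construct $y^{d_y}$ explicitly. The \emph{baseline} size-tuple demotes each of the $s_\beta$ blocks of type $\alpha_\beta$ by exactly two and keeps every other block at its maximum nonzero minor size; a direct count shows that the sizes then sum to $d_y$. Its contribution to $I_{d_y}(M)$ is the ideal
$$\prod_{i\neq\beta}(x-\alpha_i y)^{2\sum_j k^{(i)}_j}\cdot y^{2\sum_j k^{(\infty)}_j}\cdot H_{2\sum_j(k^{(\beta)}_j-1)+2\sum_j k^{(S)}_j},$$
which already contains $y^{d_y}$ up to the single obstruction $\prod_{i\neq\beta}(x-\alpha_i y)^{2\sum_j k^{(i)}_j}$. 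To clear it I pair the baseline with \emph{swap tuples}: for each pair $(F(\alpha_i, k^{(i)}_{j'}), F(\alpha_\beta, k^{(\beta)}_j))$ with $i\neq \beta$, consider the tuple that un-demotes the $\alpha_\beta$-block (back to full) and demotes the $\alpha_i$-block by two. The total size stays $d_y$, and the sum of the baseline and swap contributions, restricted to the two blocks involved, takes exactly the form $((x-\alpha_i y)^{r_1})\, H_{r_2-1} + ((x-\alpha_\beta y)^{r_2})\, H_{r_1-1}$ required by Lemma \ref{lem_resultant}, collapsing it to $H_{r_1+r_2-1}$. Iterating these swaps across all $i\neq\beta$ and all $\beta$-blocks replaces every $(x-\alpha_i y)$-factor by an $H$-contribution, and selecting the monomial $y^{d_y}$ inside the resulting $H$-ideal finishes the argument.

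The principal obstacle I foresee is the global degree bookkeeping, since each swap simultaneously shifts two block sizes and modifies two $H$-indices; the cleanest way to organize it is an induction on the number $l$ of distinct finite $\alpha_i$'s, with the base case $l=0$ being immediate because the baseline already produces $y^{d_y}$ outright.
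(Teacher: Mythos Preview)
Your overall shape is sound, but the swap step contains a genuine off-by-one gap that makes the invocation of Lemma~\ref{lem_resultant} fail. When you demote an $F$-block by two, the contribution at sizes $r_1 = 2k^{(i)}_{j'}$ and $r_2 = 2k^{(\beta)}_j$ is
\[
\bigl((x-\alpha_i y)^{r_1}\bigr)\cdot H_{r_2-2} \;+\; \bigl((x-\alpha_\beta y)^{r_2}\bigr)\cdot H_{r_1-2},
\]
not $H_{r_2-1}$ and $H_{r_1-1}$ as the lemma requires. This is not a harmless shift: a degree count shows the sum has codimension~$1$ in $H_{r_1+r_2-2}$, so it cannot equal the full homogeneous ideal. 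Concretely, take $l=2$, $s_1=s_2=1$, $k^{(1)}_1=k^{(2)}_1=1$; then $d_y=2$, your baseline plus single swap produces only $\bigl((x-\alpha_1 y)^2,\,(x-\alpha_2 y)^2\bigr)$, and one checks directly that $y^2$ is \emph{not} in this ideal. No amount of iterating the same two configurations fixes this, and your proposed induction on $l$ stalls precisely at $l=1\to 2$.

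The paper avoids this by working one level finer, at the individual $\mathcal{E}_k$-blocks (each $F(\alpha,k)$ splits into two), and demoting by~$1$ rather than~$2$. The lemma then applies verbatim. The price is that a single baseline--swap pair no longer suffices: one needs the full binary tree of $2^d$ configurations supplied by Lemma~\ref{lem_distinct_tuples}, collapsing pairs level by level. Your $F$-level scheme can be repaired along the same lines by admitting the odd sizes $2k-1$ (where $I_{2k-1}(F(\alpha,k))=(x-\alpha y)^k H_{k-1}$, as you note), but then you are effectively reproducing the paper's $\mathcal{E}$-block bookkeeping and still need the combinatorial tree, not just one round of pairwise swaps.
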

	\begin{proof}
		It is a known fact that $I_k(M) = I_k(PMQ)$ if $P,Q \in \GL(m, \C)$, see for example \cite[Chapter 1, Theorem 2]{northcott1976finite}. Therefore, we may assume that $xA_1+yA_2$ has the form given in Theorem \ref{thm_pencils}, i.e.
		$$\left(\bigoplus_{i=1}^l \left( \bigoplus_{j=1}^{s_i} F(\alpha_i, k^{(i)}_j) \right) \right) \oplus \left( \bigoplus_{j=1}^{s_\infty} F(\infty, k^{(\infty)}_j) \right) \oplus \left( \bigoplus_{j=1}^{s_S} S(k^{(S)}_j) \right) \oplus 0.$$
		
		Define 
		$$M_1 =  \left( \bigoplus_{j=1}^{s_\infty} F(\infty, k^{(\infty)}_j) \right) \oplus \left( \bigoplus_{j=1}^{s_S} S(k^{(S)}_j) \right) \oplus 0$$
		and 
		$$M_2 = \bigoplus_{i=1}^l \left( \bigoplus_{j=1}^{s_i} F(\alpha_i, k^{(i)}_j) \right).$$
		Now define the following numbers:
		$$f_1 =  2\left(\sum_{j=1}^{s_\infty} k^{(\infty)}_j + \sum_{j=1}^{s_S} k^{(S)}_j \right) $$
		and
		$$f_2 = d_y(xA_1+yA_2)-f_1 =  2\left(\sum_{j=1}^{s_i} (k^{(i)}_j - 1) +  \sum_{i=1}^l s_i - s_\beta \right)$$
		with $s_\beta = \max\{s_i\mid 1\leq i \leq l\}$ (using Lemma \ref{lem_dphi_pencil}). 
		
		Due to the block structure of $xA_1+yA_2$, we see that
		$$I_{f_1}(M_1)\cdot I_{f_2}(M_2) \subset I_{d_y(xA_1+yA_2)}(xA_1+yA_2).$$
		We claim that $y^{f_1} \in I_{f_1}(M_1)$ and $I_{f_2}(M_2) = H_{f_2}(x,y)$. In particular, $y^{f_2} \in I_{f_2}(M_2)$ and 
		$$y^{f_1+f_2} = y^{d_y(xA_1+yA_2)} \in I_{d_y(xA_1+yA_2)}(xA_1+yA_2).$$
		
		Let us first show that $y^{f_1}$ lies in $I_{f_1}(M_1)$. For this, recall from Lemma \ref{lem_pencil_minors} that $\det(F(\infty,k)) = y^{2k}$ and there is a $2k\times 2k$ minor of $S(k)$ that equals $y^{2k}$. Taking these minors in the respective summands of $M_1$ illustrates that $y^{f_1}$ can be obtained as a minor of $M_1$.
		
		Let us now argue that $I_{f_2}(M_2) = H_{f_2}(x,y)$. Note, for starters, that every summand in $M_2$ consists of two blocks. This way, there are $n$ block in total with
		$$n = 2\sum_{i=1}^l s_i.$$
		These blocks can be divided in $l$ different possible types. This way, we can uniquely identify each block by a tuple $(i,j)$, where $i$ represents the type and $j$ represents the number of the block of this type, as we did before in Lemma \ref{lem_distinct_tuples}. Now define $d$ to be the number $d = n - 2s_\beta$, and apply Lemma \ref{lem_distinct_tuples} to obtain $2^d$ distinct tuples of length $d$. If $T$ is a tuple, we associate an ideal $I(T)$ with it as follows:
		$$I(T) =  \left(\prod_{(i,j) \in T} \det(i,j)\right)\cdot \left(\prod_{(i,j)\notin T}I_{k(i,j)-1}(i,j)\right).$$
		Here, $k(i,j)$ represents the size of the block named $(i,j)$, and $I_{k(i,j)-1}(i,j)$ hence represents the ideal of $(k(i,j)-1)\times(k(i,j)-1)$ minors within the block $(i,j)$. The notation $\det(i,j)$ is used as short hand for $I_{k(i,j)}(i,j)$. In summary, $I(T)$ should be interpreted as the ideal obtained by `fully taking the blocks in $T$' and `only taking the free space in the remaining blocks'. Indeed, $I_{k(i,j)-1}(i,j) = H_{k(i,j)-1}(x,y)$ by Lemma \ref{lem_pencil_minors}. This way, the given ideal is actually generated by minors of $M_2$ with size $f_2$. We conclude that if $V$ is the set of tuples $T$ then 
		$$\sum_{T\in V} I(T) \subset I_{f_2}(M_2).$$
		
		We claim that this sum equals $H_{f_2}(x,y)$.
		
		The $2^d$ tuples can be divided in pairs $\{T_a,T_b\}$ such that $T_a(k) = T_b(k)$ for all $1\leq k \leq d-1$. We know that $T_a(d)$ and $T_b(d)$ are of a different type. Note that in particular, $T_a(d)$ does not appear in $T_b$ and vice versa. Thus, we see that
		\begin{multline*}
			I(T_a) + I(T_b)  = \left(\prod_{k=1}^{d-1} \det T_a(k)\right) \cdot \left( \det(T_a(d))\cdot H_{k(T_b(d))-1}(x,y) + \det(T_b(d))\cdot H_{k(T_a(d))-1}(x,y) \right) \\ \cdot \left(\prod_{(i,j)\notin T_a\cup T_b}H_{k(i,j)-1}(x,y)\right).
		\end{multline*}
		Recall that $\det(T_a(d))$ has the form $\left((x-\alpha y)^r\right)$. By Lemma \ref{lem_resultant}, we know that
		\begin{equation*} 
			\det(T_a(d))\cdot H_{k(T_b(d))-1}(x,y) + \det(T_b(d))\cdot H_{k(T_a(d))-1}(x,y) = H_{k(T_b(d))+k(T_a(d))-1}(x,y).
		\end{equation*} 
		Therefore, we obtain
		\begin{equation}\label{eq_sum_of_ideals}
			I(T_a) + I(T_b)  = \left(\prod_{k=1}^{d-1} \det T_a(k)\right) \cdot H(x,y),
		\end{equation}
	where $H(x,y)$ contains homogeneous polynomials of degree 
	$$k(T_b(d))+k(T_a(d))-1 + \sum_{(i,j) \notin T_a\cup T_b} (k(i,j)-1).$$
	
	Now for every pair $\{T_a,T_b\}$ we define $T'$ to be the tuple of length $d-1$ consisting of the first $(d-1)$ entries of $T_a$ (or $T_b$). We know that the $2^d/2$ tuples $T'$ satisfy the same properties given in Lemma \ref{lem_distinct_tuples}. Define $I(T')$ as in Equation \eqref{eq_sum_of_ideals}. Note that the degree of $H(x,y)$ in this expression is larger than $k(i,j)-1$ for all $(i,j) \notin T'$. Hence, if $\{T',T''\}$ is a pair with the first $(d-2)$ entries in common, we can use the same trick:
	\begin{equation*}
		\begin{split}
			I(T') + I(T'') & =  \left(\prod_{k=1}^{d-2} \det T'(k)\right) \cdot \left( \det(T'(d-1))\cdot H(x,y) + \det(T''(d-1))\cdot H(x,y)\right) \\ &= \left(\prod_{k=1}^{d-2} \det T'(k)\right) \cdot H(x,y),
		\end{split}
	\end{equation*}
	where every $H(x,y)$ has polynomials of the appropriate degree.	
	
	Continuing this procedure, we must conclude that 
	$$H(x,y) = \sum_{T\in V} I(T) \subset I_{f_2}(M_2),$$
	and hence $I_{f_2}(M_2) = H_{f_2}(x,y)$.
	\end{proof}

\bibliographystyle{plain}
\bibliography{Dere_Matthys_two_step_nilp}
\end{document}